\documentclass[11pt,reqno]{amsart}
\usepackage[left=3cm,right=3cm,top=3cm,bottom=3cm]{geometry}
\usepackage[english]{babel}
\usepackage[utf8x]{inputenc}
\usepackage{amsmath}
\usepackage{amsthm}
\usepackage{graphicx}
\usepackage{float}
\usepackage[colorinlistoftodos]{todonotes}
\usepackage{amsmath,amssymb,mathrsfs,amsthm,dsfont}
\usepackage[T1]{fontenc}
\usepackage{tikz}
\usetikzlibrary{matrix}
\selectlanguage{english}
\title[Moran model with random switching]{Persistence in the Moran model with random switching }

\author[A. Guillin]{\textbf{ {Arnaud} Guillin $^{\diamondsuit}$ }}
\address{{\bf {Arnaud} GUILLIN}\\ Laboratoire de Math\'ematiques Blaise Pascal, CNRS UMR 6620, Universit\'e Clermont-Auvergne,
avenue des Landais, F-63177 Aubi\`ere.} \email{arnaud.guillin@uca.fr}

\author[A. Personne]{\textbf{\quad {Arnaud} Personne $^{\clubsuit}$ }}
\address{{\bf {Arnaud} PERSONNE}\\ CMLA, CNRS UMR 8536, ENS Paris-Saclay,
61 Avenue du Président Wilson, 94230 Cachan.} \email{arnaud.personne@uca.fr}

\author[E. Strickler]{\textbf{\quad {Edouard} STRICKLER $^{\spadesuit}$}}
\address{{\bf {Edouard} STRICKLER}\\ Université de Lorraine, CNRS, Inria, IECL, UMR 7502, F-54000 Nancy, France} \email{edouard.strickler@inria.fr}

\theoremstyle{definition}
\newtheorem{definition}{Definition}
\newtheorem{remarque}{Remark}

\newtheorem{ex}{Example}

\newtheorem{hyp}{Assumption}

\theoremstyle{plain}
\newtheorem{theorem}{Theorem}
\newtheorem{lemma}[theorem]{Lemma}
\newtheorem{prop}[theorem]{Proposition}
\newtheorem{cor}[theorem]{Corollary}
\newtheorem{conj}[theorem]{Conjecture}

\newcommand{\pp}{\mathbb{P}}

\begin{document}

\maketitle

 \begin{center}

\textsc{$^{\diamondsuit}$ Universit\'e Clermont-Auvergne}
\smallskip

\textsc{$^{\clubsuit}$ Ecole normale superieure Paris-Saclay}
\smallskip

\textsc{$^{\spadesuit}$  Université de Lorraine, CNRS, Inria, IECL, UMR 7502}
\smallskip

\end{center}

\begin{abstract}
The paper is devoted to the study of the asymptotic behaviour of Moran process in random environment, say random selection. In finite population, the Moran process may be degenerate in finite time, thus we will study its limiting process in large population which is a Piecewise Deterministic Markov Process, when the random selection is a Markov jump process. We will then study its long time behaviour via the stochastic persistence theory of Benaïm \cite{benaimpersistence}. It will enable us to show that persistence can occur, i.e. asymptotic coexistence of all species, when there are enough switching possibilities. This is true  even if one species has never a predominant selection. \end{abstract}

\bigskip

\textit{ Key words : }   random selection, PDMP, Stochastic Persistence, Moran Process
\bigskip

 
\tableofcontents

%
%
%
%

\section*{Introduction}

Population dynamics is a complex phenomenon in which environment plays a determining role. In particular, environmental fluctuations may be a determining factor to conserve  biodiversity. 
Some works on this subject, such as \cite{kalyuzhny}, \cite{jabotlohier}, have already highlighted its influence on a population evolution and its primary role in the stability of ecosystems. From a theoretical point of view, it has also been proved that environmental fluctuations may permit coexistence in models where a constant environment does not ( \cite{ARMSTRONG1976317}, \cite{deMottoni1981}, \cite{Chesson1982}, \cite{danino2018stability} and  \cite{Chesson}).\\

 Among all  these mathematical models able to include environmental variations, one of the most known in ecology but also in  population genetics is the Moran model \cite{Moran} with selection and immigration. It is a birth and death process: in a given population, an individual is chosen to die uniformly in the population and then a child chooses his parent proportionally to the abundance in the previous population.
 Environment influences birth (and eventually death), each species has a fitness that gives individuals of the same species more or less probabilities of being chosen as parents compared to the neutral case. The fitness changes randomly through
time, and are modeled by a Markov chain.\\

 Immigration represents interactions between the community studied and the external environment, its main role is to introduce new species or reintroduce a species and so to conserve biodiversity. One can also consider mutation, but in fact it has the same impact on the process, as we are in a fixed population size. Without immigration (or mutation) and for a fixed population size, it well known that a species invade definitively the community in a finite time \cite{coalescentselection}. But what happen for large population? Our objective is to prove that, in the large population limit  (i.e the population size goes to infinity), environmental fluctuations can be sufficient to conserve biodiversity in the Moran model. It will be done through the notion of stochastic persistence developed by Benaïm and Schreiber in \cite{benaimschreiber} and in  \cite{benaimpersistence}.\\\medskip
 
 Let us present the structure of the paper and the main results we obtain. In a first section, we present the Moran Model with random selection and we obtain a quantitative approximation of this discrete process (rescaled in time with respect to  the size of the population) in the limit in large population towards a particular Piecewise Deterministic Markov Process (PDMP), which is the content of Section 1 and particularly Theorem 2. We thus have to study this PDMP in which only the selection parameter is random and is a pure jump Markov process. For two species the PDMP is the following (we only need to follow the dynamic of one species):

\begin{equation}  
\label{2species}
\left\{
\begin{aligned}
&\frac{d X_t}{dt} = s_{t}\frac{X_t(1-X_t)}{1+s_{t}X_t}\\
&\mathbb{P}_{x,s}(s_{t+h}=s_k|s_t=s_j)=q_{j,k}h+o(h) \quad \mbox{if} \quad j \neq k
\end{aligned}
\right. 
\end{equation}
where $s_t$ is our selection jump Markov process taking values in some finite space. 
 
In order to understand the long time behavior of this process, it is first particularly informative to understand this dynamic when the selection is constant, which is done in Section 2.

Then we will consider the long time behavior when the selection is a jump Markov process. To this end we introduce in Section 3 the notion of conservation of biodiversity, taken from Schreiber \cite{Schreiber}, see also \cite{benaimschreiber}. We use the the definition of stochastic persistence \cite{benaimpersistence} which (roughly) asserts that the process spend  (a.s.) an infinite time away from the absorbing boundaries.

Section 4 is dedicated to the case where there are two species and a switch between two different environments. It is sufficient as more possible values of the environment may be captured by this setting, as the important feature is to switch from one favorable environment to a defavorable one. Proposition 1 gives explicit condition to get stochastic persistence or extinction of one of the species. For example, in Example \ref{ex1}, when the jump rates are constant and equal, stochastic persistence is ensured as soon as the two values of the selection process satisfies $-s_1<s_2<-s_1/(1+s_2)$, that we illustrate numerically.

The general case is treated (partially) in Section 5, where are exhibited conditions under which a species may go to extinction or one species invade the other ones. However criterion for stochastic persistence given by \cite{benaimpersistence} are quite difficult to handle in more than 2 species as it requires the explicit knowledge of all the ergodic measures. Therefore we consider and detail the case of 3 species in Section 6. When there are only two different environments, unfortunately there is no stochastic persistence. However some interesting phenomenon takes place such as a specie never favored nor defavored may nevertheless invade the community. Turning to three different environments we exhibit numerical conditions under which stochastic persistence occurs.\\
 An appendix gathers the proof of Theorem 2 and some useful lemma.

\section{Moran model with selection and immigration and its large population limit}
\subsection{Presentation of the model}
\label{presentation}
\quad\\
\label{presentationmodele}
The Moran process was introduced in population genetics in 1958 \cite{Moran} 
to describe the probabilistic dynamics in a  constant size population in which
many alleles compete for dominance. Similarly, it models the 
stochastic dynamics  of a population in which several species coexist.
In this section we describe in details the discrete model, i.e. the Moran process. 
 A particularity of this model is that an event occurs at each time step. More precisely each event corresponds to the death of an individual and the birth of another who replaces it.\\
We consider a population, whose size is constant over time equal to $J$, composed of $S+1$ species . The proportion of the $i$ species at the $n^{th}$ event is denoted $X^{i}_{n}$, $i \in \mathbb{S}=\{1,...,S+1\}$, $n\in \mathbb{N}$.

As usual once we know $(X_n^{i})_{i=1,..,S}$, we deduce the proportion for the last species, $X_{n}^{S+1}=1-\sum\limits_{i=1}^{S}X^{i}_{n}$. We denote by $X_{n} = (X_n^{i})_{i=1,..,S}$ the species vector or abundance vector. The dynamics of evolution follows the following pattern at the step $n$:
\begin{enumerate}
\item The individual designated to die is chosen uniformly among the community.
\item It is replaced by an individual that chooses  its parent randomly in the community. 
The parent is then of the species $i$ with probability
\[\frac{X^{i}_{n}(1+s^{i}_{n})}{1+\sum\limits_{k=1}^{S+1}X_n^{k}s^{k}_{n}}.\] 
\end{enumerate}
The $(s_{n}^{i})_{ i \in \mathbb{S}}$ are the selection parameters (or fitness) which evolve through time. They  can be seen as an advantage (or a disadvantage) giving more weight to the $i^{th}$ species. In a neutral case, where all the species have the same fitness, the above probability is equal to $X^i_n$, meaning that no species is advantaged. 

We denote by $s_n$, the environment at step $n$, i.e the vector having for $i^{th}$ coordinate $s_n^i$. In the following,  $(s_n)_{n>0}$ will be a Markov chain taking values in a finite space $E$ with cardinality $K$. Furthermore, we  assume that for all $n \geq 0$,  $s_{n}^{S+1}=0$. Indeed, we can obtain it from any configuration by changing all the coefficients by  $s_{n}^{i}=\frac{\tilde{s}^{i}_{n}-\tilde{s}^{S+1}_{n}}{1+\tilde{s}_{n}^{S+1}} $. This assumption forces, if we take initially positive fitness, that the coefficients  belong to the set $]-1;\infty[$

We assume throughout this work that $(s_n)_{n \geq 0}$ is autonomous, meaning  that its evolution does not depend on $(X_n)_{n\ge0}$. We further define $U_n$ as the vector composed of $X_n$ and $s_n$. 

This model therefore describes a Markovian dynamic in which a species invades definitively the community almost surely in a finite time. This is due to the absence of immigration. Some works, such as \cite{danino2018fixation} and \cite{danino2018stability},  give an estimation of the absorbing time in non random environment.  However, the invasion time is increasing with the   population size $J$.
Our main goal is  to understand the behaviour of the process when the population is large. Do environmental switch improve or reduce biodiversity?

The species vector $(X_n)_{n \geq 0}$ is a Markovian random process. 
 Let us describe the transition probabilities.
Let $x$ be the vector having for coordinate $i$, $x^{i}$.
Denote $\Delta=\frac{1}{J}$, so for the $i$ species:

    \begin{eqnarray*}
     P_{x^{i}+}&=& \pp\left(X^{i}_{n+1}=x^i+\Delta|U_{n}=(x,s_n)\right)=(1-x^{i})\,\frac{x^{i}(1+s^{i}_{n})}{1+\sum\limits_{k=1}^{S+1}x^{k}s^{k}_{n}},
     \end{eqnarray*}
     \begin{eqnarray*}
  P_{x^{i}-}   &=&\pp\left(X^{i}_{n+1}=x^i-\Delta|U_{n}=(x,s_n)\right)=x^{i}\left(1-\frac{x^{i}(1+s^{i}_{n})}{1+\sum\limits_{k=1}^{S+1}x^{k}s^{k}_{n}}\right),
\end{eqnarray*}
     \begin{eqnarray*}
    P_{x^{i+}x^{j-}} &=&\mathbb{P}(\{X^{i}_{n+1}=x^{i}+\Delta\} \cap\{ X^{j}_{n+1}=x^{j}-\Delta\}|U_{n}=(x,s_n))=x^{j}\left(\frac{x^{i}(1+s_{n}^{i})}{1+\sum\limits_{k=1}^{S+1}x^{k}s_{n}^{k}}\right).
\end{eqnarray*}
When the population size $J$ is big, understanding the temporal evolution of the population is not easy from a mathematical and computing  point of view. Thus we will approach the dynamics of this model by a continuous-time random differential equation when the population is large.

\subsection{In large population}

Let us start with the following proposition that characterises the order (relative to J) of the expectation, variance, and covariance of the abundance variation of a species during an event:
\begin{prop}
\label{expvar2}
\quad\\
Let $\Delta_{t}=\frac{1}{J}$ and $n=tJ$. When $J$  goes to infinity, we have :
\begin{enumerate}
\item $ E[X^{i}_{n+1}-X^{i}_{n}|U_{n}=(x,s_n)]=\frac{1}{J}x^{i}\left(s_n^{i}-\sum\limits_{k=1}^Sx^{k}s_n^{k}\right)(1+\sum\limits_{k=1}^Sx^{k}s_n^{k})^{-1}$,
\item $Var[X_{n+1}-X_{n}|X_{n}=(x,s_n)]=O(\frac{1}{J^2})$,
\item $Cov[X^{i}_{n+1},X^{l}_{n+1}|U_{n}=(x,s_n)]=O(\frac{1}{J^2}) $.

\end{enumerate}
\end{prop}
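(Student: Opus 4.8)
The plan is to read everything off the one-step transition probabilities already written down, exploiting the structural fact that during a single event exactly one individual dies and one is born, so the increment vector $\delta_n := X_{n+1}-X_n$ has entries in $\{-\Delta,0,+\Delta\}$ with at most one coordinate equal to $+\Delta$ and at most one equal to $-\Delta$. Throughout I would write $p_i := \frac{x^{i}(1+s_n^{i})}{1+\sum_{k=1}^{S+1}x^{k}s_n^{k}}$ for the probability that the newborn is of species $i$, and use that the dying individual is uniform, so species $i$ dies with probability $x^{i}$. Note also that the convention $s_n^{S+1}=0$ collapses $\sum_{k=1}^{S+1}x^{k}s_n^{k}$ to $\sum_{k=1}^{S}x^{k}s_n^{k}$, reconciling the sums in the transition probabilities with those in the statement.

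For (1), I would compute the mean increment of the $i$-th coordinate directly as
\[
\E[\delta_n^{i}\mid U_n=(x,s_n)] = \Delta\,\big(P_{x^{i}+}-P_{x^{i}-}\big) = \Delta\big((1-x^{i})p_i - x^{i}(1-p_i)\big) = \Delta\,(p_i - x^{i}),
\]
the cross terms $\pm x^{i}p_i$ cancelling. Putting $p_i-x^{i}$ over the common denominator $1+\sum_k x^k s_n^k$ gives $p_i-x^{i} = \frac{x^{i}(s_n^{i}-\sum_k x^k s_n^k)}{1+\sum_k x^k s_n^k}$, and with $\Delta=1/J$ this is exactly the claimed expression.

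For (2) and (3), the point is a clean separation of scales. Since $|\delta_n^{i}|\le\Delta$, the second moment satisfies $\E[(\delta_n^{i})^2\mid U_n] = \Delta^{2}\big(P_{x^{i}+}+P_{x^{i}-}\big)=O(\Delta^2)$, the bracket being a probability; combined with $(\E[\delta_n^{i}])^2=O(\Delta^2)$ from (1), this yields $\mathrm{Var}[\delta_n^{i}\mid U_n]=O(1/J^2)$. For the covariance with $i\neq l$, the product $\delta_n^{i}\delta_n^{l}$ is nonzero only when one of the two species is born while the other dies, in which case it equals $-\Delta^{2}$; hence $\E[\delta_n^{i}\delta_n^{l}\mid U_n] = -\Delta^{2}\big(P_{x^{i+}x^{l-}}+P_{x^{l+}x^{i-}}\big)=O(\Delta^2)$, and subtracting $\E[\delta_n^{i}]\E[\delta_n^{l}]=O(\Delta^2)$ gives $\mathrm{Cov}=O(1/J^2)$. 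Since $X_n$ is fixed under the conditioning, the covariances of $X_{n+1}^{i},X_{n+1}^{l}$ coincide with those of the increments.

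I do not expect a genuine obstacle here: the whole statement is a bookkeeping exercise on three-valued increments. The only points requiring care are, first, correctly encoding that a single step changes at most two coordinates, so that off-diagonal products are governed solely by the joint birth/death probabilities $P_{x^{i+}x^{l-}}$; and second, checking that the cancellation in (1) really leaves $p_i-x^{i}=\Theta(1)$ rather than something smaller. This last point is what matters downstream: after the time change $n=tJ$ the drift accumulates at order one per unit time while the fluctuation terms are $O(1/J)$, which is precisely the scale separation that will drive the convergence to the deterministic switched vector field in Theorem 2.
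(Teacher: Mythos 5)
Your proposal is correct, and it is exactly the natural argument: the paper states Proposition \ref{expvar2} without proof (treating it as a direct bookkeeping computation from the transition probabilities $P_{x^i+}$, $P_{x^i-}$, $P_{x^{i+}x^{j-}}$ of Section 1.1), and your derivation --- the cancellation giving $\E[\delta_n^i\mid U_n]=\Delta(p_i-x^i)$, the bound $\E[(\delta_n^i)^2\mid U_n]\le\Delta^2$, and the observation that $\delta_n^i\delta_n^l\in\{0,-\Delta^2\}$ for $i\neq l$ --- is precisely the computation the authors leave implicit. Nothing is missing.
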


This proposition shows that the expectation is of the order of $ \frac{1}{J} $ whereas the variance is of order $ \frac{1}{J ^ 2}$. Considering a scale in $\frac{1}{J}$, the variation of variance over a time step becomes negligible in large population limit. Thus, we should obtain a limit process where only $s$ could be random. With this choice, we keep a selection  independent of the size of the population and the limit process is usually called the "strong selection approximation", see \cite{Ethier1989} and \cite{kimuraapproxdiff}. It is opposed to another possible choice, the "weak selection approximation", which consists in considering the selection inversely proportional to the size of the population. The paper \cite{gjp2018} deals with this other possibility.

To emphasise the dependence on $J$, we use in this section the notation $U_n^J=(X_n^J, s_n^J)$.  We give an approximation result under the following assumption. 
\begin{hyp}
\label{hyp:limitMC}
There exists a finite set $E$ such that, for all $J$, the  process $(s_n^J)_{n \geq 0}$  is an autonomous Markov chain defined on $E$. Moreover, if we denote by $P_{s,s'}^J$ the transition probability of $(s_n^J)_{n \geq 0}$, then for all $s\neq s'$, there exist $\alpha_s > 0$ and $Q_{s,s'} \geq 0$ such that 
\[
\lim_{J \to \infty} J P_{s,s'}^J = \alpha_s Q_{s,s'}.
\]
\end{hyp}
We consider the rescaled piecewise linear extension  $\tilde s_t^J=s_{\lfloor tJ\rfloor}^J$ of $(s_n^J)_{n \geq 0}$. Let  denote  by $P_{\tilde{s}^J}(s,s',t)$ the transition probabilities of $\tilde{s}^J$, then  for all $s\not=s'$,$$\lim\limits_{J\rightarrow\infty}P_{\tilde{s}^{J}}\left(s,s',\frac{1}{J}\right)\times J =\alpha_sQ(s,s').$$ 
We also consider the interpolated continuous-time process $(\tilde{X}^J_t)_{t \geq 0}$ of $(X_n^J)_{n \geq 0}$ defined by

\[
\tilde{X}^J_t = X^J_{\lfloor tJ\rfloor} + J \left( t - \frac{\lfloor tJ\rfloor}{J} \right) \left( X^J_{\lfloor tJ\rfloor + 1}- X^J_{\lfloor tJ\rfloor}\right),
\]
and we set $(\tilde{U}^J_t)_{t \geq 0} = (\tilde{X}_t^J, \tilde{s}_t^J)_{t \geq 0}$.
Theorem \ref{thqttapprox} below means, under Assumption  \ref{hyp:limitMC}, when $J$ tends to infinity the sequence of processes $(\tilde{U}^J_t)_{t \geq 0} $ converges towards $(U_t)_{t \geq 0} = (X_{t},s_t)_{t \geq 0} $, where $(X_t)_{t>0}$ is solution of
\begin{equation}
\begin{pmatrix}
dX^{1}_{t}\\
\vdots\\
dX^{S}_{t}
\end{pmatrix}
=
\begin{pmatrix}
X^{1}_{t}\frac{ s_{t}^1-\sum\limits_{k\in \mathbb{S}}X_t^{k}s^{k}_{t}}{1+\sum\limits_{k\in \mathbb{S}}X_t^{k}s^{k}_{t}}   \\
\vdots\\
X^{S}_{t}\frac{ s_{t}^S-\sum\limits_{k\in \mathbb{S}}X_t^{k}s^{k}_{t}}{1+\sum\limits_{k\in \mathbb{S}}X_t^{k}s^{k}_{t}}  
\end{pmatrix}dt
\label{eqprincipalepdmp} 
\end{equation}
and $(s_{t})_{t \geq 0}$ is a continuous times Markov chain with $\alpha Q$ for generator.

The process $(U_t)_{t \geq 0}$ is a  \emph{Piecewise Deterministic Markov Processes} (PDMP). It evolves according to the deterministic dynamics of the equation \eqref{eqprincipalepdmp} for some $ s $  fixed during a random time with exponential distribution. Then its behavior changes and adopts another dynamic when the parameter $ s $ switches. This kind of process was introduced by Davis \cite{Dav84}. The PDMPs' have become ubiquitous in stochastic modelling of various phenomena. They are applied to neuroscience \cite{PTW10}, \cite{PTW102}, \cite{PTW12}, genetics \cite{CDMR12},  ecology \cite{BL16}, internet traffic \cite{CMP10}, \cite{FGM12}, \cite{FGM16}, \cite{BCGMZ13}. See also \cite{azais}, \cite{BMZIHP} and \cite{cloezmalrieu} and the references therein for more details and applications. 

For $(U_t)_{t>0}$ and more generally for Markov processes, we use the following standard notations. If $\mu$ is a probability measure, we let $\mathbb{P}_{\mu}$ denote the law of $(U_t)_{t>0}$ knowing that $U_0$ has distribution $\mu$, and $\mathbb{E}_{\mu}$ denote the associated expectation. In the special case where $\mu = \delta_{(x,s)}$, then we write $\mathbb{P}_{x,s}$ instead of $\mathbb{P}_{\delta_{x,s}}$.

Consider now the process $(Z^J_{t})_{t \geq 0} = (U_{t/J})_{t \geq 0}$, taking values in $\mathscr{I}=[0.1]\times E$.  Its generator acts on $f$ in $\mathscr{C}^1(\mathscr{I})$ as : 
\begin{align*}
 Lf(x,s)=&\frac{sx(1-x)}{(1+sx)J}\frac{{\partial}}{\partial{x}}f(x,s)
 +\sum\limits_{s'\in E}\frac{\alpha_sQ_{s,s'}}{J}\big(f(x,s')-f(x,s)\big)\quad \\
 &=  L_Cf(x,s)+ L_D f(x,s)
 \end{align*}
Recall that our  aim is to compare $(\tilde{U}^J_t)_{t \geq 0}$ with $(U_t)_{t \geq 0}$. We will be interested in quantities of the type 
\[
\mathbb{E}_{x,s}\left[ f\left(\tilde{U}_t^J\right) - f(U_t) \right],
\]
 for $f : \mathscr{I}\to \mathbb{R}$ regular enough and $(x,s) \in I_J   \times E $, where $I_{J}=\lbrace \frac{i}{J}: i=0,1,2,\cdots,J\rbrace $ is the state space of $(X^J_n)_{n \geq 0}$. On the space of continuous functions $f : \mathscr{I} \to \mathbb{R}$, we consider the semi-norm $\Vert f\Vert_J=\max\limits_{(x,s) \in I_J   \times E  }\vert f(x,s)\vert$. For $k \geq 1$, we say that  $f : \mathscr{I} \to \mathbb{R}$ is of class $C^k$ if  for all $s$, the map $f_s : x \mapsto f(x,s)$ is of class $C^k$. In that case, we denote by $f^{(k)}$ the application defined on $\mathscr{I}$ by $f^{(k)}(x,s) = f_s^{(k)}(x)$.
  
  The following theorem gives an estimation of the error of approximation of our initial Moran process by our PDMP.
 
\begin{theorem}
\label{thqttapprox}
There exist a function $q : \mathbb{R}_+ \to \mathbb{R}_+$ at most exponential and a function $k : \mathbb{R}_+ \to \mathbb{R}_+$ linear in time such that for all  $f \in C^{3}(\mathscr{I}) $, for all $t \geq 0$, 
\begin{align*}
\max\limits_{(x,s) \in I_J   \times E  } \mathbb{E}_{x,s}[f(\tilde{U}^J_{t})-f(U_t)]&\le \frac{q_t}{J}\Big(\Vert f\Vert_J+\Vert f^{(1)} \Vert_{J}+\Vert f^{(2)} \Vert_{J}\Big)\\ &+k_t\max\limits_{s,s' \in E}\Big|JP_{s,s'}-\alpha_{s}Q_{s,s'}\Big|\Vert f\Vert_{J}
+O(\frac{1}{J^2})
\end{align*}

\end{theorem}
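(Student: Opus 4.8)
The plan is to recast the statement as a comparison of two Markov semigroups sampled at the same time $t=n/J$, and to run a standard telescoping (Trotter--Kurtz type) argument driven by a one-step generator estimate. Write $\Pi$ for the one-step transition operator of the discrete chain, $\Pi g(x,s)=\mathbb{E}_{x,s}[g(U^J_1)]$, and $(P_t)_{t\ge0}$ for the Markov semigroup of the PDMP $(U_t)$, so that $\mathbb{E}_{x,s}[f(U^J_n)]=\Pi^n f(x,s)$ and $\mathbb{E}_{x,s}[f(U_{n/J})]=P_{n/J}f(x,s)$. First I would dispose of the two discretisation effects separating $f(\tilde U^J_t)$ from $\Pi^{\lfloor tJ\rfloor}f$ and $P_t f$ from $P_{\lfloor tJ\rfloor/J}f$: the piecewise-linear interpolation moves the $X$-coordinate by at most $\tfrac1J$ while $\tilde s^J_t=s^J_{\lfloor tJ\rfloor}$, so the first gap is $\le \tfrac{C}{J}\|f^{(1)}\|_J$; and since $\partial_t P_t f=\mathcal A P_t f$, the second is $\le \tfrac1J\sup_{r\le t}\|\mathcal A P_r f\|$, both of which are absorbed into the announced $\tfrac{q_t}{J}(\|f\|_J+\|f^{(1)}\|_J+\|f^{(2)}\|_J)$ term. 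It then remains to estimate $\Pi^n f-P_{n/J}f$ for $n=\lfloor tJ\rfloor$, for which I would use the exact telescoping identity
\[
\Pi^n-P_{n/J}=\sum_{k=0}^{n-1}\Pi^{\,n-1-k}\,(\Pi-P_{1/J})\,P_{k/J}.
\]

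The heart of the one-step analysis is a local error bound for $(\Pi-P_{1/J})g$ at a grid point $(x,s)$. Writing $\mathcal A g=b(x,s)\cdot\nabla_x g+\sum_{s'}\alpha_sQ_{s,s'}\big(g(\cdot,s')-g(\cdot,s)\big)$ for the PDMP generator, with $b$ the drift of \eqref{eqprincipalepdmp}, one has $P_{1/J}g=g+\tfrac1J\mathcal A g+O(1/J^2)$. On the discrete side I would condition on whether the environment switches and Taylor-expand $g$ in $x$ to third order: by Proposition \ref{expvar2} the conditional mean increment is exactly $\tfrac1J b(x,s)$, the conditional variance and covariances are $O(1/J^2)$, and $|\Delta X|\le\tfrac1J$, so the first-order term reproduces $\tfrac1J b\cdot\nabla_x g$, the second-order term is $O(1/J^2)\|g^{(2)}\|$ and the remainder is $O(1/J^3)\|g^{(3)}\|$; the off-diagonal transitions, carrying probability $P_{s,s'}=O(1/J)$ together with an $O(1/J)$ shift of $X$, contribute $\sum_{s'\ne s}P_{s,s'}(g(\cdot,s')-g(\cdot,s))+O(1/J^2)\|g^{(1)}\|$. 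Subtracting, the two drifts cancel and the jump parts leave exactly the rate mismatch, giving
\[
\|(\Pi-P_{1/J})g\|_J\le \frac{C}{J}\,\max_{s,s'}\big|JP_{s,s'}-\alpha_sQ_{s,s'}\big|\,\|g\|_J+\frac{C}{J^2}\big(\|g^{(1)}\|+\|g^{(2)}\|+\|g^{(3)}\|\big).
\]
This is the step where the generators are matched, and where Proposition \ref{expvar2} and the $C^3$ hypothesis enter.

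The main obstacle is the third ingredient: a regularity estimate propagating smoothness through the semigroup, of the form $\|(P_r f)^{(j)}\|\le C_j e^{c_j r}\sum_{i\le j}\|f^{(i)}\|$ for $j=1,2,3$, with at most exponential constants. I would obtain this by conditioning on the jump path of $(s_t)$: given the (finite, Poisson-controlled) sequence of environments and jump times, $X_r$ is a deterministic composition of the flows $\dot x=b(x,s)$, and its derivatives in the initial condition solve the associated first, second and third variational equations. Since the simplex is compact and $1+\sum_k x^k s^k$ stays bounded away from $0$ (as $s^i>-1$), the maps $b(\cdot,s)$ have bounded derivatives of all orders, so Grönwall on the variational equations yields pathwise bounds growing like $e^{c_j r}$; averaging over the jump law (whose rates are bounded) preserves the exponential growth and produces the claimed control of $\|(P_r f)^{(j)}\|$. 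Controlling the second and third derivatives, where products of lower-order sensitivities appear, is the genuinely technical point and is what forces $f\in C^3$.

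Finally I would assemble the pieces. Because the discrete chain lives on the grid, $\Pi$ is a contraction for the seminorm $\|\cdot\|_J$, so $\|\Pi^{\,n-1-k}h\|_J\le\|h\|_J$; inserting the local bound with $g=P_{k/J}f$, using $\|P_{k/J}f\|\le\|f\|$ and the regularity estimate with $k/J\le t$, each summand is at most $\tfrac{C}{J}\,\Delta^J\|f\|+\tfrac{C}{J^2}e^{ct}\big(\|f^{(1)}\|+\|f^{(2)}\|+\|f^{(3)}\|\big)$, where $\Delta^J=\max_{s,s'}|JP_{s,s'}-\alpha_sQ_{s,s'}|$. Summing the $n\le tJ$ terms turns the first contribution into $Ct\,\Delta^J\|f\|$, i.e. a factor $k_t$ linear in $t$ multiplying $\max_{s,s'}|JP_{s,s'}-\alpha_sQ_{s,s'}|\,\|f\|_J$, and the second into $\tfrac{Cte^{ct}}{J}(\cdots)$, i.e. a factor $q_t$ at most exponential divided by $J$; the genuinely higher-order pieces (the third-derivative remainders and the $\mathcal A^2$ terms) are collected into the final $O(1/J^2)$. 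A point requiring care throughout is the bookkeeping between the grid seminorm $\|\cdot\|_J$ and the continuum suprema that appear when $P_{1/J}$ pushes mass off the grid in the Taylor remainders; one checks that the derivative bounds used are uniform on $[0,1]\times E$ and hence control the grid quantities.
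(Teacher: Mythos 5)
Your proposal follows essentially the same route as the paper's proof: the same three-term decomposition isolating the interpolation and time-discretisation errors, the same telescoping identity reducing everything to the one-step comparison $(S_1-T_1)T_k f$, the same generator-matching Taylor expansion on both sides (drift cancellation via Proposition \ref{expvar2}, leaving the jump-rate mismatch term), and the same Gronwall/variational-equation argument propagating derivative bounds through the PDMP semigroup, assembled with the identical geometric-sum estimate. The only noteworthy difference is that you explicitly track $\Vert g^{(3)}\Vert$ in the one-step bound and demand the $j=3$ regularity estimate, whereas the paper's Lemma \ref{lem:controlPt} only treats $j=1,2$ and hides the third-derivative dependence inside its $O(1/J^3)$ remainder; your bookkeeping is, if anything, the more careful version of the same argument.
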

The sketch of the proof is the same as in the article \cite{ggp2018}. The complete proof is in appendix. To understand the general dynamic of the PDMP, we  first study the vector field of the equation \eqref{eqprincipalepdmp} for a non random environment. The following section is dedicated to this study.

\section{Vector field  for constant parameters.}
\label{dim2champvect}
In all this section, we assume that $s$ is a constant process. In particular, $(X_t)_{t>0}$ is a deterministic process, solution of the ODE 
\begin{equation}
\begin{pmatrix}
\frac{dX^{1}_{t}}{dt}\\
\vdots\\
\frac{dX^{S}_{t}}{dt}
\end{pmatrix}
=
\begin{pmatrix}
X^{1}_{t}\frac{ s^1-\sum\limits_{k\in \mathbb{S}}X_t^{k}s^{k}}{1+\sum\limits_{k\in \mathbb{S}}X_t^{k}s^{k}}   \\
\vdots\\
X^{S}_{t}\frac{ s^S-\sum\limits_{k\in \mathbb{S}}X_t^{k}s^{k}}{1+\sum\limits_{k\in \mathbb{S}}X_t^{k}s^{k}}  
\end{pmatrix}
\label{ODEnoswitch}
\end{equation}
Let us first define the space  where the process $X_t$ evolves: $$\mathscr{E}=\{ X \in \mathbb{R}_+^S \: :  \sum\limits_{i=1}^SX^i\leq 1  \}.$$ Note that $\mathscr{E}$ can also be seen as the unit simplex in dimension $S+1$: $\mathscr{E} \simeq \Delta = \{ X \in \mathbb{R}_+^{S+1} \: :  \sum\limits_{i=1}^{S+1} X^i =1  \}.$
That is why, for  $X \in \mathscr{E}$, we set $X^{S+1} = 1 - \sum\limits_{i=1}^S X^i$. 

For $1 \leq i \leq S+1$, we let $\mathscr{E}^i_0$ be the extinction set of species $i$ : 
\[
\mathscr{E}_0^i=\{X \in \mathscr{E} \: :   X^i=0\}.
\]
 We also denote by  $\mathscr{E}_0$ the extinction set, i.e the set where at least one species is extinct: 
\[
\mathscr{E}_0 = \bigcup_{i=1}^{S+1} \mathscr{E}^i_0.
\]
For $ 1 \leq S$, let $e_i$ denote the $i^{th}$ vector of the standard basis and set $e_{S+1} = 0$. The point $e_i$ correspond to the invasion of species $i$, that is, species $i$ is the only species in the system. Note that when we see the process as a process in $\Delta$, $e_{S+1}$ is the $(S+1)^{th}$ vector of the standard basis.
Finally, we set $\mathscr{E}_+^i = \mathscr{E} \setminus \mathscr{E}_0^i$, the set where species $i$ is not extinct and $\mathscr{E}_+ = \mathscr{E} \setminus \mathscr{E}_0$ the set where  none of  the species is  extinct.

Let us remark that $\mathscr{E}_0$ corresponds to the edges of the set $\mathscr{E}$, and $\mathscr{E}_+$ to the interior of $\mathscr{E}$. It is also easily seen that $\mathscr{E}$, $\mathscr{E}_0^i$ , $\mathscr{E}_0$, $\mathscr{E}_+^i$ and $\mathscr{E}_+$ are invariant sets for $X_t$ i.e $X_t \in \mathscr{E}$ (respectively $\mathscr{E}_0^i$, $\mathscr{E}_0$, $\mathscr{E}_+^i$, $\mathscr{E}_+$) if and only if $X_0 \in \mathscr{E}$ (respectively $\mathscr{E}_0^i$, $\mathscr{E}_0$ , $\mathscr{E}_+^i$, $\mathscr{E}_+$).

The following theorem states that when the environment does not switch, the species with the highest fitness  will invade the community.

\begin{theorem}
\label{th:deterministe}
\quad\\
Let $X_t$ be the solution of \eqref{ODEnoswitch}.
Assume that  $s^k\neq s^j$ for all $j \neq k$ and set $m=\arg\max\limits_{i\leq S+1}s^i$.\\ 
Then if $X_0^m >0$,  $X_t$  converges to $e_m$.
\end{theorem}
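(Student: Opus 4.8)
The plan is to reduce the $S$-dimensional convergence to a family of one-dimensional monotonicity statements by passing to logarithmic ratios, the classical device for replicator-type dynamics. First I would record the structural facts that make the system well-behaved. Writing $\bar s_t = \sum_{k \in \mathbb{S}} X_t^k s^k$ for the mean fitness, equation \eqref{ODEnoswitch} reads $\dot X_t^i = X_t^i (s^i - \bar s_t)/(1 + \bar s_t)$. Since every $s^k > -1$ and $\sum_{k} X_t^k = 1$, the denominator $1 + \bar s_t = \sum_k X_t^k(1+s^k)$ is a convex combination of strictly positive numbers, hence strictly positive for all $t$; moreover, being a convex combination of the $s^k$, one has $\bar s_t \le s^m$ with $s^m = \max_{i} s^i$. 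I would also invoke the already-established invariance of $\mathscr{E}_+^i$: if $X_0^i>0$ then $X_t^i>0$ for all $t$, so the ratios below are well defined (in particular $X_t^m>0$ for all $t$ since $X_0^m>0$).

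Next I would compute, for each $j \neq m$ with $X_0^j > 0$, the evolution of the log-ratio:
\[
\frac{d}{dt} \log \frac{X_t^m}{X_t^j} = \frac{\dot X_t^m}{X_t^m} - \frac{\dot X_t^j}{X_t^j} = \frac{s^m - s^j}{1 + \bar s_t}.
\]
The numerator $s^m - s^j$ is strictly positive, the $s^i$ being pairwise distinct and $s^m$ maximal, while the bound $1 + \bar s_t \le 1 + s^m$ converts this into a \emph{time-uniform} lower bound
\[
\frac{d}{dt} \log \frac{X_t^m}{X_t^j} \ge \frac{s^m - s^j}{1 + s^m} =: c_j > 0.
\]
Integrating yields $X_t^m / X_t^j \ge (X_0^m/X_0^j)\, e^{c_j t} \to \infty$, hence $X_t^j / X_t^m \to 0$; since $0 < X_t^m \le 1$ this forces $X_t^j \le X_t^j / X_t^m \to 0$.

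To finish, I would observe that for any index $j$ with $X_0^j = 0$ the invariance of $\mathscr{E}_0^j$ gives $X_t^j \equiv 0$, so in every case $X_t^j \to 0$ for all $j \neq m$. As there are finitely many species, $\sum_{j \neq m} X_t^j \to 0$, and therefore $X_t^m = 1 - \sum_{j\neq m} X_t^j \to 1$. In the coordinates of $\mathscr{E}$ this is precisely $X_t \to e_m$, with the convention $e_{S+1} = 0$ when $m = S+1$.

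I do not expect a genuinely hard step: the argument is essentially the standard computation for the replicator equation. The only point requiring care is securing the time-uniform positive lower bound on the growth rate of the log-ratio, which hinges on controlling the mean fitness $\bar s_t$. The upper bound $\bar s_t \le s^m$ (hence $1 + \bar s_t \le 1 + s^m$) is what prevents the fluctuating denominator from degrading the exponential separation of $X_t^m$ from $X_t^j$, while the positivity $1 + \bar s_t > 0$ coming from $s^k > -1$ is what keeps the dynamics and the logarithms well defined throughout.
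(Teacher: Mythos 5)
Your proof is correct, but it follows a genuinely different route from the paper. The paper's argument is a soft monotonicity argument on $X_t^m$ itself: since $s^m \geq \bar s_t$, the coordinate $X_t^m$ is non-decreasing and bounded, hence converges; then, from $\frac{dX_t^m}{dt} \to 0$ and $X_t^m$ bounded away from zero, one deduces $\sum_{j \neq m}(s^m - s^j)X_t^j \to 0$, which forces each $X_t^j \to 0$. Your argument instead tracks the log-ratios $\log(X_t^m/X_t^j)$, which grow at least linearly at rate $c_j = (s^m-s^j)/(1+s^m)$ thanks to the uniform bounds $0 < 1 + \bar s_t \leq 1+s^m$. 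This buys you more: you obtain for free the exponential decay $X_t^j \leq (X_0^j/X_0^m)e^{-c_j t}$, i.e. convergence to $e_m$ at the rate $\Lambda = -\min_{k \neq m}(s^m-s^k)/(1+s^m)$ stated in Remark 1, which the paper only obtains later through the machinery of Theorem \ref{th:alwaysthebest}. Your route also sidesteps a small subtlety the paper glosses over: convergence of $X_t^m$ does not by itself imply $\frac{dX_t^m}{dt}\to 0$; one needs, e.g., uniform continuity of the derivative (Barbalat's lemma), which holds here since the vector field is smooth on the compact simplex, but is left implicit. The only step you take as automatic is that the replicator form $\dot X^i_t = X_t^i(s^i - \bar s_t)/(1+\bar s_t)$ also holds for the coordinate $i=S+1$ (not explicitly part of \eqref{ODEnoswitch}); this is a one-line verification from $X^{S+1} = 1 - \sum_{i\leq S} X^i$ and $s^{S+1}=0$, and is needed whenever $m = S+1$ or one of the ratios involves species $S+1$.
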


\begin{proof}



 By assumption, $s^m > s^j$ for all $j \neq m$. In particular, this implies that $s^m \geq \sum\limits_{j=1}^S s^j X^j$, with strict inequality as soon as $X^m \neq 1$. In particular, $\frac{dX^m_t}{dt} \geq 0$, and thus $X_t^m$ is non-decreasing. Since $X_t^m$ is bounded above by $1$, we conclude that $X_t^m$ converges to some $X^m$ and  $\frac{dX^m_t}{dt} $ goes to $0$. Now assume that $X_0^m > 0$, then since $X_t^m$ is increasing, $X^m > 0$. Thus $\lim_{t \to \infty} \frac{d X_t^m}{dt} = 0$ implies that $\lim_{t \to \infty}  \sum\limits_{j=1}^{S+1} (s^m - s^j) X^j_t = 0$, which due to $s^m > s^j$ implies that $\lim_{t \to \infty} X_t^j= 0$ for all $j \neq m$.  This concludes the proof.
\end{proof}

\begin{remarque}
One can actually prove that $X_t$ converges to $e_m$ exponentially fast, with rate $\Lambda := - \min\limits_{k \neq m} \frac{  (s^m - s^k)}{1 + s^m}.$ For a proof of this result, we refer to theorem \ref{th:alwaysthebest}.
\end{remarque}
\begin{remarque}
If some  fitnesses are the same, i.e. $s^j=s^i$ for some $i\neq j$, then the same proof shows that all the species that do not have the best fitness go to extinction. In other words, for all $i$ such that $s^i < s^m$, species $i$ goes to extinction.
\end{remarque}

Figures \ref{fig:image12} and \ref{fig:trajX} give some examples of vector fields and the trajectories for different initial conditions for one particular vectors field.

\begin{figure}[htbp]
\begin{minipage}[c]{.45\linewidth}
\begin{center}
\includegraphics[scale=0.35]{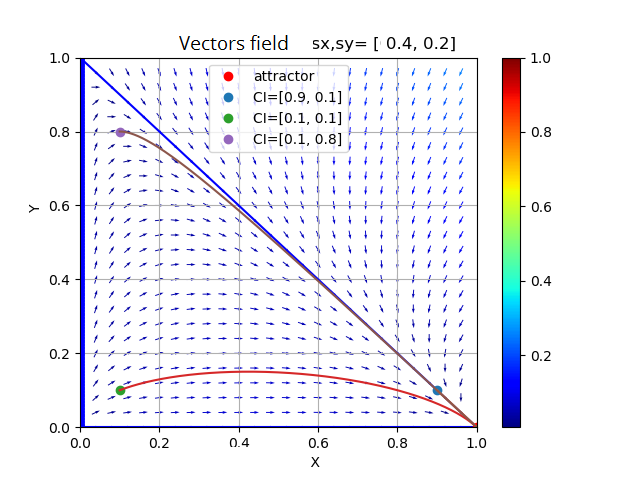}
\end{center}
\end{minipage}
\hfill
\begin{minipage}[c]{.45\linewidth}
\begin{center}
\includegraphics[scale=0.35]{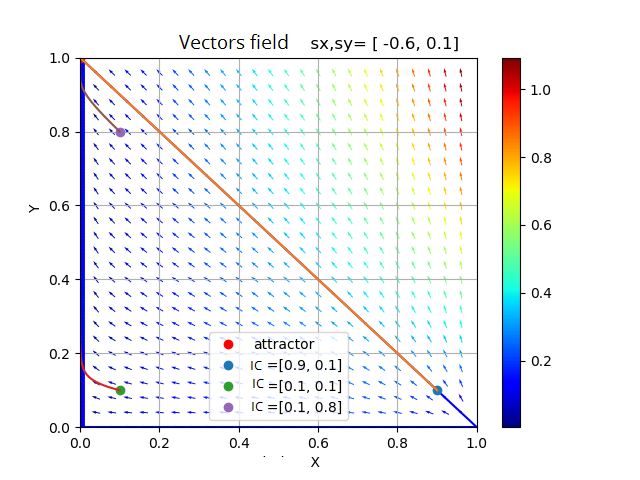}
\end{center}
\end{minipage}
\caption{ Vectors fields and trajectories for different initial conditions (3 species) }.
\label{fig:image12}
\end{figure}

\begin{figure}[htbp]
\begin{minipage}[c]{.45\linewidth}
\begin{center}
\includegraphics[scale=0.35]{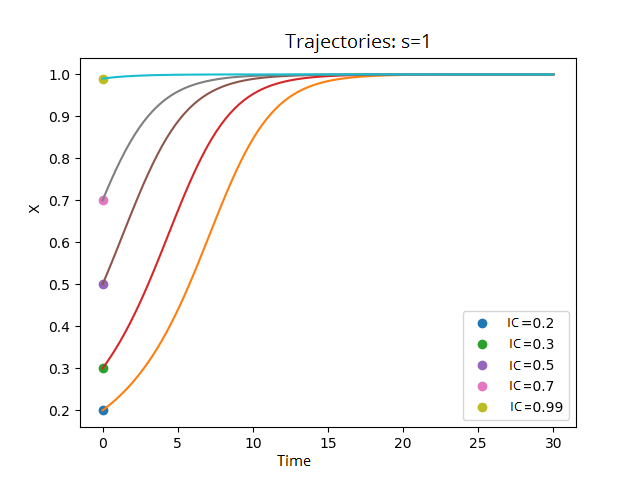}
\end{center}
\end{minipage}
\hfill
\begin{minipage}[c]{.45\linewidth}
\begin{center}
\includegraphics[scale=0.35]{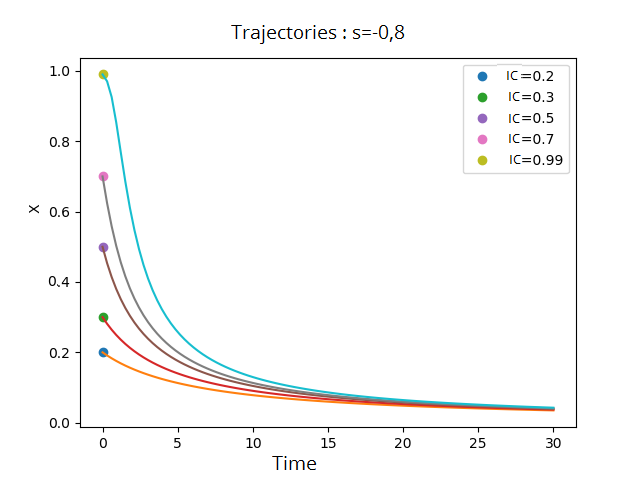}
\end{center}
\end{minipage}
\caption{Trajectories of $X_t$ for different initial conditions (S=2).}
\label{fig:trajX}
\end{figure}

The following sections deal with the random process with switched environments. First  we treat the case of two species, which is basic  but essential to understand the behavior of our population with more species.

\section{General framework and tools}

In this section,  $(s_t)_{t>0}$ is Markovian jump process taking values in  a finite space  $E=\{s_1,...,s_K\}$ and having for generator $(Q_{i,j})_{1\leqslant i,j \leqslant K}$. Let $s_j^i$ be the  selection parameter of the $i^{th}$ species in environment $s_j$.

In the discrete model,  even if $(s_t)_{t>0}$ is a Markov chain, one species invades the community in a finite time almost surely. This property  may still be preserved in weak selection  (see for example \cite{gjp2018}, where examples are given on the selection process leading to a reachable boundary).

In strong selection, even if the selection is deterministic, it is impossible for a species to reach the extinction set in finite time but the process may concentrate in a neighbourhood of extinction and no longer emerge over time, i.e., extinction occurs at an infinite horizon and it is equivalent to a loss of biodiversity.
In the following, we prove that under some assumptions, in strong selection it is possible to conserve biodiversity in the community. Moreover we give some information on the long time behavior of the process. 
 
\subsection{General framework}
In this part, we recall some recent results of Benaïm \cite{benaimpersistence} that will be used in the paper.
With the notations of the previous section, we set $M=\mathscr{E}\times E$, $M_0 = \mathscr{E}_0 \times E$ and $M_+ = M \setminus M_0$. We define in a similar way $M_0^i$ and $M_+^i$.

We consider the process $(U_t)_{t>0}=(X_t,s_t)$ on $M$ starting from $(x,s)$ and defined by :

\begin{equation} \label{process}
\left\{
\begin{aligned}
&\frac{dX_t}{dt}=G_{s_t}(X_t)\\
&\mathbb{P}_{x,s}(s_{t+h}=s_k|s_t=s_j)=q_{j,k}h+o(h) \quad \mbox{if} \quad j \neq k
\end{aligned}
\right. 
\end{equation}
with $G^i_{s}(X)=X^iF^i_{s}(X)$ and $F^i_{s}(X)=\frac{s^i-\sum\limits_{k=1}^Ss^k X^k}{1+\sum\limits_{k=1}^S s^k X^k}$ and $q_{j,k}$ are the generator coefficients of the Markovian jump process.


Finally let $(\Pi_t^u)_{t>0}$ be the empirical occupation measure of the process $(U_t)_{t \geq 0}$, for  $U_0=u=(x,s)$, defined by 
$$\Pi_T^u(B)=\frac{1}{T}\int_0^T \mathds{1}_{\{U_t\in B\}}ds, \quad \forall B \in \mathscr{B}(M)$$
\subsection{Stochastic Persistence}$~$\\
The following definition follows from \cite{Schreiber}.
\begin{definition}
The family $\{(U_t)_{t>0},U_0 \in M^+ \}$ is \textit{stochastically persistent} with respect to $M_0$ if for all $\epsilon>0$ there exists a compact set $K_{\epsilon}\in M^+$ such that for all $u$ in $M^+$: \[\mathbf{P}(\lim_{t\rightarrow 0}\inf\Pi^{u}_t(K_{\epsilon})\geqslant 1-\epsilon)=1\]
\end{definition}
This  definition means that all the species, initially present, stay away from the extinction set over arbitrary long periods of time. Persistence with respect to $M_0^i$ is defined in the same way. 
 
\begin{definition} A probability measure is said to be ergodic for a Markovian process if it is invariant and extremal for the process, meaning that it cannot be written as a nontrivial convex combination of other invariant measures.
 For a Borelian set $B$ of M, we denote by $\mathscr{P}_{erg}(B)$ the space of ergodic probability measure such as for all $\mu$ in $\mathscr{P}_{erg}(B)$ , $\mu(B)=1$.
 
 \end{definition}
We recall the definition given in \cite{benaimpersistence} of the invasion rates with respect to an ergodic probability measure in $\mathscr{P}_{erg}(M_0)$.
\begin{definition}
\label{tauxinvasion}
For $\mu \in \mathscr{P}_{erg}(M_0)$, we introduce $\mu_j(B)=\mu(B\times{s_j})$. \emph{The invasion rate of species $i$ with respect to $\mu$} is defined by :
\[
\lambda_i(\mu)=\sum\limits_{1 \leqslant j \leqslant K }\int_{M_0} F_{s_j}^i(x)d\mu_j(x).\]
\end{definition}

\begin{remarque}
\label{rem:invasionrates}
The intuition behind these quantities is the following. On the one hand, from equation \eqref{process}, we see that whenever $X^i_0 \neq 0$, one has 
\[
\frac{1}{t} \log (X_t^i) = \frac{1}{t} \int_0^t F^i_{s_u}(X_u) du + \frac{1}{t} \log ( X_0^i ).
\]
In particular, 
\[
\frac{1}{t} \log (X_t^i) \sim \frac{1}{t} \int_0^t F^i_{s_u}(X_u) du 
\]
as $t \to \infty$. On the other hand, Birkhoff ergodic's Theorem states that for $\mu \in \mathscr{P}_{erg}(M_0)$, for $\mu$ almost every $u_0$, $\pp_{u_0}$ almost surely, 
\[
\lim_{t \to \infty} \frac{1}{t} \int_0^t F^i_{s_u}(X_u) du  = \sum\limits_{1 \leqslant j \leqslant K }\int_{M_0} F_{s_j}^i(x)d\mu_j(x) = \lambda^i(\mu).
\]
Thus,  $\lambda^i(\mu)$ represent an exponential growth rate of species $i$ near $M_0^i$.
\end{remarque}

The following theorem is a consequence of \cite[Th. 6.1]{benaimpersistence}. It gives sufficient conditions for the process to be persistent with respect to $M_0^i$ and $M_0$, respectively.
\begin{theorem}
\label{thm:persistence}
\begin{enumerate}
\item Assume that for all 
$\mu \in \mathscr{P}_{erg}(M_0^i)$,  $\lambda^i(\mu)>0.$ Then the process given by \eqref{process} is H-persistent  with respect to $M_0^i$.
\item Assume that there exists positive numbers $\{c^i, 1\leqslant i \leqslant S+1\}$ such that, for all $\mu \in \mathscr{P}_{erg}(M_0)$,  $$\sum\limits_{ 1 \leq i \leqslant S+1}c^i\lambda_i(\mu)>0.$$
Then the process given by \eqref{process} is H-persistent  with respect to $M_0$.
\end{enumerate}
\end{theorem}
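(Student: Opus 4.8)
The plan is to deduce both statements from Benaïm's abstract persistence theorem \cite[Th. 6.1]{benaimpersistence} by checking that the PDMP \eqref{process} falls within its framework and that our invasion rates coincide with the growth exponents appearing there. So the proof is essentially a verification-and-translation, with the real analytic content already contained in the cited theorem.

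First I would verify the standing assumptions. The state space $M=\mathscr{E}\times E$ is compact, since $\mathscr{E}$ is the closed simplex and $E$ is finite. For each $s$, the vector field $G_s$ is $C^1$ on a neighbourhood of $\mathscr{E}$: the denominator $1+\sum_k s^k X^k$ stays bounded away from $0$ because $s^k>-1$ and $X\in\mathscr{E}$, so no singularity arises. Crucially, the multiplicative structure $G^i_s(X)=X^iF^i_s(X)$ guarantees that each extinction face $\mathscr{E}_0^i=\{X^i=0\}$ is invariant under every $G_s$, hence $M_0^i$ and $M_0$ are closed invariant sets for $(U_t)_{t>0}$. This is exactly the cooperative structure that \cite[Th. 6.1]{benaimpersistence} requires, and $(U_t)_{t>0}$ is Feller by general PDMP theory.

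Next I would match the invasion rates. Integrating the $i$-th line of \eqref{process} gives $\frac{d}{dt}\log X^i_t=F^i_{s_t}(X_t)$, so $F^i_s$ is precisely the per-capita growth rate of species $i$, continuous up to the boundary. For an ergodic $\mu\in\mathscr{P}_{erg}(M_0)$, the exponent of \cite{benaimpersistence} is the $\mu$-average of this growth rate, which by the decomposition $\mu_j(B)=\mu(B\times s_j)$ equals $\sum_j\int F^i_{s_j}\,d\mu_j=\lambda^i(\mu)$, matching Definition \ref{tauxinvasion} (the heuristic is recorded in Remark \ref{rem:invasionrates}). For part $(1)$ I would apply the single-face version of the criterion to $M_0^i$: if $\lambda^i(\mu)>0$ for every ergodic $\mu$ supported on $M_0^i$, then species $i$ is H-persistent with respect to $M_0^i$. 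For part $(2)$ I would feed the positive weights $(c^i)$ into Theorem 6.1: the hypothesis $\sum_i c^i\lambda_i(\mu)>0$ for all ergodic $\mu$ on $M_0$ is exactly the averaged Hofbauer-type condition making $V=-\sum_i c^i\log X^i$ an admissible Lyapunov function, which yields H-persistence with respect to the full boundary $M_0$.

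The main obstacle is not the algebra but checking the technical hypotheses behind \cite[Th. 6.1]{benaimpersistence} in our setting: in particular, that the empirical occupation measures $\Pi_T^u$ are tight and that, when the process lingers near the boundary, their weak limits are convex combinations of ergodic measures in $\mathscr{P}_{erg}(M_0)$, together with the accessibility/regularity condition guaranteeing these limits are genuine boundary measures. Once the framework is seen to apply and the exponents are identified with $\lambda^i(\mu)$, both conclusions follow directly; the remaining work is bookkeeping to align our notation ($\lambda^i$, $\mathscr{P}_{erg}(M_0^i)$, $M_+$) with that of the cited theorem.
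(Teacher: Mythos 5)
Your proposal is correct and follows essentially the same route as the paper, which states Theorem \ref{thm:persistence} as a direct consequence of \cite[Th. 6.1]{benaimpersistence} without further proof. Your verification of the framework hypotheses (compactness of $M$, invariance of the extinction faces from the factorization $G_s^i = X^i F_s^i$, boundedness of the denominator since $s^k > -1$, and the identification of $\lambda^i(\mu)$ with the growth exponents of the cited theorem) simply makes explicit what the paper's citation leaves implicit.
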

The first point of the above theorem can be interpreted as follows. If for some species $i$, one has $\lambda_i(\mu) > 0$ for all $\mu \in \mathscr{P}_{erg}(M_0^i)$, then the face $\mathscr{E}_0^i$ is \textit{repulsive} : if the process start in $\mathscr{E}_+^i$ and close to the face $\mathscr{E}_0^i$, then the process is in some sense pushed away from $\mathscr{E}_0^i$. The second point states that the process is pushed away from every face, and thus concentrates on the interior $M_+$ of the domain. 

\subsection{Behavior near a common zero}
The previous theorem gives some way to understand the behavior of the process near the boundary $\mathscr{E}_0^i$, which corresponds to the extinction of species $i$. It is also important to understand the process near the vertex $e_i$, which corresponds to the invasion of species $i$. Since the $\{e_i\}_{i<S+1}$ are common zero of the vector fields $G_s$, we can use the recent results of \cite{BS17} and \cite{S18}. According to these papers, the behavior of the process $(U_t)_{t>0}$ near $e_i$ for some $ 1 \leq i \leq S+1$ is controlled by the behavior of the linear process $V_t = (Y^i_t, s_t)$, where $(s_t)_{t \geq 0}$ is the Markov process with generator $Q$ and $Y^i$ evolves according to 
\[
\frac{d Y^i_t}{dt} = A^i_{s_t} Y^i_t,
\]
where $A_s$ is the Jacobian matrix of $G_s$ at $e_i$. Assume with no loss of generality that $i=1$. Then $A^i_s = DG_s(e_1)$ is given by
\[
A^i_s = \begin{pmatrix}
\frac{-s^1}{1+s^1} & *\\
0 & D_s
\end{pmatrix},
\]
where $*$ is a $1 \times (S-1)$ vector and $D_s$ is a $(S -1) \times (S - 1)$ diagonal matrix given by
\[
D_s = \mathrm{diag}\left(\frac{s^2 - s^1}{1+s^1}, \ldots, \frac{s^{S} - s^1}{1+s^1}\right).
\]
In the particular case $i = S+1$, (recall $e_{S+1}=0$), 
\[
DG_s(e_{S+1}) = \mathrm{diag}(s^1, \ldots, s^S).
\]
\section{Study for two species in a two-states environment}

\label{etudedim1}

We consider in this section $S=1$, i.e there is only two species in the community.  
The vectors field have the form:

\begin{equation}\label{onedimprosess}
\left\{
\begin{aligned}
&\frac{dX^1_t}{dt}=X^1_t\frac{s_t-s_tX^1_t}{1+s_tX_t^1}\\
&\frac{dX^2_t}{dt}=X^2_t\frac{-s_tX^1_t}{1+s_tX_t^1}\\
\end{aligned}
\right. 
\end{equation}

As before, since $X_t^1 + X_t^2 = 1$, we only study $(X^1_t)_{t>0}$, that we denote simply by $(X_t)_{t>0}$.  Thus we are interested in the study of 
\begin{equation}
\frac{d X_t}{dt} = s_{t} \frac{X_t(1-X_t)}{1+s_{t}X_t}.
\end{equation}

Moreover, we assume that $K=2$, the community evolves in two different environments. The selective parameters takes values in  $E=\{(s_1,0), (s_2,0)\}$.
It is possible to take more than two values for the fitness, and the following reasoning still hold. We restrict this study to the case of two values to simplify the notations.
Assume moreover $q_1:= q_{1,2}>0$ and $ q_2:=q_{2,1}>0$, and so $(s_t)_{t>0}$ has an unique invariant probability measure $\mu=p_1\delta_{s_1}+p_2 \delta_{s_2}$, where $p_1 = \frac{q_{2}}{q_{1}+q_{2}}$ and $p_2 = \frac{q_{1}}{q_{1}+q_{2}}$ . We set $M=[0,1] \times E$ and $M_0=\{0,1\}\times E$. It is easily checked that $\mathscr{P}_{erg}(M_0)=\{\mu_1,\mu_2\}=\{\delta_{0}\otimes \mu , \delta_{1} \otimes \mu \}$. To avoid trivial switching, we assume that $s_1 \neq s_2$, and without loss of generality, we assume that $s_1 > s_2$. For $i \in \{1,2\}$, we define $g_i : [0,1] \to \mathbb{R}$ by
\[
g_i(x) = s_{i} \frac{x(1-x)}{1+s_{i}x}.
\] We set 
\[
\Lambda_0 =  p_1 s_1 + p_2 s_2\quad \mbox{and} \quad \Lambda_1 = - \left( p_1 \frac{s_1}{1+s_1} + p_2 \frac{s_2}{1+s_2} \right). 
\]These quantities are the average growth rate of $(X_t)_{t>0}$ at $0$ and $1$, respectively. 
The following proposition gives the behavior of the process according to the signs of $\Lambda_0$ and $\Lambda_1$.

\begin{prop}
\label{prop:oneD}
We can describe four regimes :
\begin{enumerate}
\item If $\Lambda_0 < 0$, then $\Lambda_1 > 0$, and, for all $x \in (0,1)$ and $s \in E$, 
\[ 
\mathbb{P}_{x,s} \left( \limsup_{ t \to \infty} \frac{1}{t} \log ( X_t ) \leq \Lambda_0 \right) = 1.
\]
In particular, species $1$ goes extinct.
\item If $\Lambda_1 < 0$, then $\Lambda_0 > 0$, and, for all $x \in (0,1)$ and $s \in E$, 
\[ 
\mathbb{P}_{x,s} \left( \limsup_{ t \to \infty} \frac{1}{t} \log ( 1-X_t ) \leq \Lambda_1 \right) = 1.
\]
In particular, species $2$ goes extinct.
\item If $\Lambda_0 > 0$ and $\Lambda_1 > 0$,  there exists an unique invariant probability measure $\pi$ such that $\pi( \{0\} \times E) = \pi( \{1 \} \times E) = 0$. Moreover, $\pi$ is absolutely continuous with respect to the Lebesgue measure on $[0,1] \times E$ with explicitly computable density, and there exist $C, \theta, \gamma > 0$ such that, for all $(x,s) \in (0,1) \times E$ and all $t \geq 0$, 
\[
\| \mathbb{P}_{x,s} ( X_t \in \cdot) - \pi \|_{TV} \leq C ( (x)^{- \theta} + ( 1 - x )^{- \theta}) e^{ - \gamma t}.
\]
In particular, both species persist.
\item If $\Lambda_0=0$ or $ \Lambda_1 = 0$, then the only invariant probability measures of the process $\big((X_t,s_t)\big)_{t>0}$ are $\mu_1$ and $\mu_2$ . 

In particular, the process is not persistent.
\end{enumerate}

\end{prop}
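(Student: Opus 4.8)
The plan is to record one elementary inequality and one symmetry, and then treat the four regimes separately.

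\emph{Preliminary reductions.} I would first note that $s-\frac{s}{1+s}=\frac{s^2}{1+s}\ge 0$ for every $s>-1$, with equality only at $s=0$. Averaging against $p_1,p_2$ gives $\Lambda_0+\Lambda_1=\sum_i p_i\frac{s_i^2}{1+s_i}$, which is strictly positive because $s_1\neq s_2$ forbids $s_1=s_2=0$. This yields at once the sign implications claimed in (1) and (2), namely $\Lambda_0<0\Rightarrow\Lambda_1>0$ and $\Lambda_1<0\Rightarrow\Lambda_0>0$, and shows that $\Lambda_0,\Lambda_1$ cannot both vanish, so that in (4) exactly one of them is zero. Next I would introduce $Y_t:=1-X_t$; a direct computation gives $\frac{dY_t}{dt}=\tilde s_t\frac{Y_t(1-Y_t)}{1+\tilde s_t Y_t}$ with $\tilde s_i:=-s_i/(1+s_i)$, and the quantity playing the role of $\Lambda_0$ for $Y$ is exactly $\Lambda_1=p_1\tilde s_1+p_2\tilde s_2$. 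Hence regime (2) is regime (1) applied to $Y$, and it suffices to prove (1), (3), (4).

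\emph{Regime (1): extinction of species 1.} Since $\frac{d}{dt}\log X_t=F^1_{s_t}(X_t)$, one has $\frac1t\log X_t=\frac1t\log X_0+\frac1t\int_0^t F^1_{s_u}(X_u)\,du$. Because $F^1_s(x)-s=-\frac{s(1+s)x}{1+sx}\to 0$ as $x\to 0$ (the denominator staying bounded away from $0$), and the jump process is ergodic so that $\frac1t\int_0^t s_u\,du\to\Lambda_0$, the exact rate $\Lambda_0$ follows \emph{once} $X_t\to 0$ almost surely. To obtain the latter I would construct a Lyapunov function $V(x,s)=x^{\theta}\psi_s$ with $\theta>0$ small and positive weights $\psi_s$, solving the Perron--Frobenius problem for $\theta\,\mathrm{diag}(s)+Q$, whose top eigenvalue equals $\theta\Lambda_0+O(\theta^2)<0$; this is exactly where $\Lambda_0<0$ enters, following \cite{benaimpersistence}. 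Then $V$ is a supermartingale near $\{0\}\times E$, forcing $X_t\to 0$, and a Markov inequality together with Borel--Cantelli upgrades this to $\limsup_t\frac1t\log X_t\le\Lambda_0$.

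\emph{Regime (3): persistence and exponential ergodicity.} Here $\lambda^1(\mu_1)=\Lambda_0>0$ and $\lambda^2(\mu_2)=\Lambda_1>0$ (these are the invasion rates of Remark \ref{rem:invasionrates}, since $F^1_{s_j}(0)=s_j$ and the growth rate of $1-X$ at $1$ is $\tilde s_j$), so part (1) of Theorem \ref{thm:persistence}, applied to each species, gives $H$-persistence with respect to $M_0^1$ and $M_0^2$, hence to $M_0=M_0^1\cup M_0^2$. Trajectories thus stay in compact subsets of $(0,1)\times E$ and, by tightness, an invariant probability measure charging $M_+$ exists. For uniqueness, absolute continuity and the explicit density I would check the accessibility and bracket conditions of \cite{BS17,S18}; the bracket condition holds throughout $(0,1)$ since $g_1(x)-g_2(x)=x(1-x)\frac{s_1-s_2}{(1+s_1x)(1+s_2x)}\neq 0$. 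The density then solves the stationary equations $\partial_x(g_1\rho_1)=q_2\rho_2-q_1\rho_1$ and $\partial_x(g_2\rho_2)=q_1\rho_1-q_2\rho_2$; summing and using $g_i(0)=g_i(1)=0$ gives $g_1\rho_1+g_2\rho_2\equiv 0$, reducing the system to one separable ODE with explicit solution. Finally, exponential convergence in total variation follows from Harris' theorem with $W(x)=x^{-\theta}+(1-x)^{-\theta}$, whose geometric drift is precisely the content of $\Lambda_0,\Lambda_1>0$ (near $0$ the averaged drift of $x^{-\theta}$ is $-\theta\Lambda_0<0$, symmetrically near $1$), and whose boundary blow-up produces the prefactor $C(x^{-\theta}+(1-x)^{-\theta})$. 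I expect this quantitative step, i.e. building the modulated Lyapunov functions and the Harris minorization on compacts of $(0,1)$ with the \emph{correct} boundary-dependent prefactor, to be the main obstacle, shared with (1) and (4).

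\emph{Regime (4): no interior invariant measure.} Assume $\Lambda_0=0$ (the case $\Lambda_1=0$ is symmetric). The explicit stationary density of regime (3) behaves near $0$ like $x^{-1-(q_1/s_1+q_2/s_2)}$, and $q_1/s_1+q_2/s_2=(q_1+q_2)\Lambda_0/(s_1s_2)=0$, so the only candidate density blows up like $x^{-1}$ and is non-integrable. Combined with the absolute continuity of any interior invariant measure, again guaranteed by the bracket condition, this rules out any invariant probability measure charging $(0,1)\times E$. Every invariant measure is therefore supported on the fixed boundary $\{0,1\}\times E$, on which $s_t$ has unique invariant law $\mu$; hence the invariant measures are exactly the convex combinations of $\mu_1=\delta_0\otimes\mu$ and $\mu_2=\delta_1\otimes\mu$, and in particular the process is not persistent.
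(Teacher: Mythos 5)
Your preliminary reductions and regimes (2)--(4) are essentially sound and in fact track the paper's own proof: the identity $\Lambda_0+\Lambda_1=\sum_i p_i\,s_i^2/(1+s_i)>0$ is a cleaner route to the sign implications than the paper's inequality manipulation; the change of variables $Y_t=1-X_t$ with $\tilde s=-s/(1+s)$ is exactly the paper's ``switch species $1$ and $2$''; regime (3) via Theorem \ref{thm:persistence} together with the bracket condition $g_1-g_2\neq 0$ on $(0,1)$, and regime (4) via the stationary Fokker--Planck system and non-integrability of the unique candidate density, are the same arguments as the paper's (Lemma \ref{lem:density}), with the hard quantitative steps (uniqueness, absolute continuity, total-variation convergence with the boundary prefactor) delegated, as in the paper, to \cite{benaimpersistence}, \cite{BMZIHP}, \cite{BS17}.

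The genuine gap is in regime (1). You assert that $V(x,s)=x^{\theta}\psi_s$ being a supermartingale \emph{near} $\{0\}\times E$ ``forces $X_t\to 0$''. It does not: a Lyapunov function with negative drift only on a neighbourhood $\mathscr{U}$ of $0$ yields, by optional stopping, that a trajectory started in $\mathscr{U}$ converges to $0$ with some probability $\eta>0$ and exits $\mathscr{U}$ with the complementary probability --- this is exactly the content of \cite[Th. 3.1]{BS17}, i.e. inequality \eqref{inqnobra1} of the paper, and it cannot be improved to probability one by this argument alone. Nor does a global version of your function exist: near $x=1$ all the drifts $F_s(x)$ vanish, so a supermartingale of this form on all of $(0,1)\times E$ would require $Q\psi\le 0$ componentwise, which for an irreducible generator forces $\psi$ to be constant, after which the condition fails in the environment with $s_1>0$. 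Consequently your argument says nothing about starting points away from $0$, nor about trajectories that leave $\mathscr{U}$. Closing this gap is precisely what the remaining two thirds of the paper's proof does, and it is where $\Lambda_1>0$ --- which you prove but never use in regime (1) --- enters: (i) by \cite[Th. 3.2]{BS17}, $\Lambda_1>0$ implies the process almost surely enters $[0,1-\varepsilon]$ (inequality \eqref{inqnobra2}); (ii) from $[0,1-\varepsilon]$, a neighbourhood of $0$ is reached with probability bounded below, using that $0$ is globally asymptotically stable for the flow of the negative-fitness environment (inequality \eqref{inqnobra3}); (iii) a strong-Markov, conditional Borel--Cantelli renewal argument, as in \cite[Th. 3.1]{BL16}, then converts ``positive probability at each attempt'' into almost-sure eventual convergence. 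Your closing observation --- that once $X_t\to 0$ almost surely, the exact rate $\Lambda_0$ follows from $F^1_s(x)-s\to 0$ as $x \to 0$ and ergodicity of $(s_t)_{t>0}$ --- is correct and even slightly sharper than what the paper records, but it rests on the almost-sure convergence that your proposal does not establish.
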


\begin{proof}

We prove $(1)$, the proof of $(2)$ being the same as the one of $(1)$ by switching species $1$ and $2$.
Assume that $\Lambda_0 < 0$. In particular, $s_2 < 0$ and $p_1 s_1 < - p_2 s_2$, which implies that 
\[
- \Lambda_1 < - \left( \frac{1}{1+s_1} - \frac{1}{1+s_2} \right)p_2 s_2 < 0,
\]
proving that $\Lambda_1 > 0$. Now since $\Lambda_0 > 0$, \cite[Th. 3.1]{BS17}, implies that there exist $c > 0$  and $\eta > 0$ such that for all $x \in (0,c)$ and $s \in E$
\begin{equation}
\label{inqnobra1}
\mathbb{P}_{x,s}( \limsup_{t \to \infty} \frac{1}{t} \log(X_t) \leq \frac{\Lambda_0}{2}) \geq \eta.
\end{equation}
On the other hand, because $\Lambda_1 > 0$, there exists by Theorem 3.2 in \cite{BS17} $\varepsilon > 0$ such that for all $x \neq 0$,
\begin{equation}
\label{inqnobra2}
 \mathbb{P}_{x,s}( \tau < \infty) = 1,
\end{equation}
where $\tau = \inf \{ t \geq 0 \: : \:  X_t \leq 1 - \varepsilon \}$. Finally, because $0$ is a globally asymptotically stable equilibrium of $f^0$ on $[0,1)$, one can show that there exists a constant $C > 0$ such that for all $x \in [0, 1 - \varepsilon]$,
\begin{equation}
\label{inqnobra3}
  \mathbb{P}_{x,s} ( Z_t \in \mathcal{U} \times E) \geq C.
\end{equation}
Like in \cite[Th. 3.1]{BL16}, Equations \ref{inqnobra1}, \ref{inqnobra2} and \ref{inqnobra3} enable to conclude the proof of point 1.

We pass to the proof of point (3).  Since $\Lambda_0$ and $\Lambda_1$ are positive, \cite[Th. 3.2]{BS17} implies that for all $\varepsilon > 0$, there exists $1 > r > 0$ such that, for all $(x,s) \in (0,1) \times E$, almost surely 
\[
\liminf \frac{1}{t} \int_0^t \mathds1_{ r < X < 1 - r} ds \geq 1 - \varepsilon.
\]
This implies that the the sequence $(\Pi_t)_{t > 0}$ is almost surely tight in $(0,1)$. Moreover, every limit point of $(\Pi_t)$ is an invariant probability measure for $\big(X_t,s_t)\big)_{t>0}$ (see \cite[Th. 2.1]{benaimpersistence}). Thus, the process admits an invariant probability measure $\pi$ on $(0,1) \times E$. Uniqueness, absolute continuity and convergence in total variation easily follow from Theorem 4.10 in \cite{benaimpersistence} and Theorem 4.4 in \cite{BMZIHP}. 

Point (4) is proven in the following lemma as in \cite{HK19}.
\end{proof}

\begin{remarque}
Note that this property  still holds for any number of environments $K$, with $\Lambda_0=\sum\limits_{i\leq K}p_i s_i$ and $\Lambda_1=-\sum\limits_{i\leq K}p_i \frac{s_i}{1+s_i}$ .
\end{remarque}
In the following lemma, we describe more precisely the case where the growth rates are positive by computing explicitly the density of the invariant probability measure concentrated on $(0,1) \times E$.

\begin{lemma}
\label{lem:density}
Assume that $(U_t)_{t>0}$ admits an invariant probability measure $\Pi$ on $(0,1) \times E$. Then $\Pi$ is absolutely continuous with respect to the Lebesgue measure on $(0,1) \times E$. Moreover, denote by $h_1$ and $h_2$ the densities of $\Pi(\cdot,0)$ and $\Pi(\cdot,1)$, respectively. Then for all $x \in (0,1)$,
\[
h_i(x) = \frac{H(x)}{|g_i(x)|},
\]
where 
\[
H(x) = C (1-x)^{\beta \Lambda_1} x^{ \alpha \Lambda_0},
\]
with $ \alpha = \frac{q_1 + q_2}{|s_1 s_2|}$ and $\beta = \alpha (1+ s_1)(1+s_2)$ and $C$ the positive constant such that $\int_0^1(h_1 + h_2) = 1$. In particular, if $\Lambda_0=0$ or $\Lambda_1 = 0$, $U$ cannot admits such an invariant probability measure.
\end{lemma}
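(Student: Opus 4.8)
The plan is to read the density of $\Pi$ off the stationary forward (Fokker--Planck) equation of the PDMP and then to solve the resulting scalar ODE explicitly. Write $\Pi_i$ for the finite measure on $(0,1)$ given by $\Pi_i(A)=\Pi(A\times\{s_i\})$, $i\in\{1,2\}$. Testing the invariance identity $\int Lf\,d\Pi=0$ against functions of the form $f(\cdot,s_1)=\varphi\in C_c^\infty((0,1))$, $f(\cdot,s_2)=0$ (and symmetrically) yields, in the sense of distributions on $(0,1)$,
\begin{equation*}
(g_1\Pi_1)'=q_2\Pi_2-q_1\Pi_1,\qquad (g_2\Pi_2)'=q_1\Pi_1-q_2\Pi_2.
\end{equation*}
The right-hand sides are finite signed measures, so each $g_i\Pi_i$ is a measure whose distributional derivative is again a measure; in one dimension this forces $g_i\Pi_i$ to be absolutely continuous (its distributional primitive is a $BV$ function, and an atom or a singular continuous part would produce a derivative that is not a measure). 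Since $g_i$ has constant, non-zero sign on $(0,1)$, dividing by $g_i$ shows $\Pi_i$ is absolutely continuous, and a one-line bootstrap upgrades the densities $h_i$ to continuous functions satisfying the two identities classically. (Absolute continuity is in any case already granted by point (3) of Proposition \ref{prop:oneD}, via the cited results of \cite{benaimpersistence} and \cite{BMZIHP}.)

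Adding the two equations gives $(g_1h_1+g_2h_2)'=0$, so the total flux $g_1h_1+g_2h_2$ equals a constant $c$. I would show $c=0$: since $h_1+h_2\in L^1(0,1)$ one has $\liminf_{x\to0^+}x\,(h_1+h_2)(x)=0$, while $g_i(x)\sim s_ix$ as $x\to0^+$, so $g_1h_1+g_2h_2\to0$ along a suitable sequence, forcing $c=0$. Hence $H:=g_1h_1=-g_2h_2$. Because $h_1,h_2\ge0$ and $H\not\equiv0$, the fields $g_1,g_2$ must have opposite signs on $(0,1)$; as $\operatorname{sign}(g_i)=\operatorname{sign}(s_i)$ and $s_1>s_2$, this means $s_1>0>s_2$ (consistent with $\Lambda_0,\Lambda_1>0$), $H\ge0$, and $h_i=H/|g_i|$, which is already the announced form.

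It remains to compute $H$. Substituting $h_1=H/g_1$, $h_2=-H/g_2$ into the first equation gives the separable ODE $H'/H=-q_1/g_1-q_2/g_2$. Using the partial fraction $1/g_i(x)=\tfrac{1}{s_i}\big(\tfrac1x+\tfrac{1+s_i}{1-x}\big)$, this reads $H'/H=\tfrac{A}{x}-\tfrac{B}{1-x}$ with $A=-(q_1/s_1+q_2/s_2)$ and $B=q_1(1+s_1)/s_1+q_2(1+s_2)/s_2$, whence $H(x)=C\,x^{A}(1-x)^{B}$. The remaining purely algebraic task is to identify $A=\alpha\Lambda_0$ and $B=\beta\Lambda_1$ using $p_1=q_2/(q_1+q_2)$, $p_2=q_1/(q_1+q_2)$ and the definitions of $\alpha,\beta,\Lambda_0,\Lambda_1$; the key relations are $q_2s_1+q_1s_2=(q_1+q_2)\Lambda_0$ and $|s_1s_2|=-s_1s_2$. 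Finally, for the last assertion, near $x=0$ one has $h_i(x)\asymp x^{\alpha\Lambda_0-1}$ and near $x=1$, $h_i(x)\asymp(1-x)^{\beta\Lambda_1-1}$; these are integrable iff $\Lambda_0>0$ and $\Lambda_1>0$ respectively, so if $\Lambda_0=0$ or $\Lambda_1=0$ the total mass diverges and no invariant probability density can exist.

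I expect the main obstacle to be the absolute-continuity step. Everything after it is a one-variable ODE computation, but justifying that an a priori arbitrary invariant measure has a density — rather than charging the boundary or a singular set — is the only point requiring a genuine argument, handled either by the $BV$/bootstrap reasoning above or by appeal to the regularity results already invoked for Proposition \ref{prop:oneD}.
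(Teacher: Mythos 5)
Your proposal is correct, and it reaches the conclusion by a genuinely more self-contained route than the paper. The paper's proof first invokes Proposition \ref{prop:oneD} to force $s_1>0>s_2$, then gets absolute continuity, uniqueness and $C^\infty$ smoothness of the densities from accessibility plus the weak bracket condition (citing \cite{BMZIHP}, \cite{BH12}, \cite{BHM15}), writes down the Fokker--Planck system \eqref{eq:fokker}, and merely \emph{verifies} that the announced formula solves it. You instead derive the Fokker--Planck system weakly by testing stationarity against $C_c^\infty$ functions, obtain absolute continuity from the one-dimensional fact that a measure whose distributional derivative is a finite measure must be (a constant plus) a $BV$ function times Lebesgue, recover the sign condition $s_1>0>s_2$ from the zero-flux identity $g_1h_1+g_2h_2\equiv 0$, and then \emph{derive} the formula by integrating the resulting linear ODE. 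Your route buys two things: it is elementary (no hypoellipticity-type results needed), and it is logically tighter on one point the paper glosses over --- pure verification identifies the density with the candidate only modulo uniqueness of integrable solutions of \eqref{eq:fokker}, which is exactly what your derivation establishes (the paper's argument that the flux constant vanishes is implicit; you prove it via $\liminf_{x\to 0^+}x(h_1+h_2)(x)=0$). What the paper's route buys in exchange is stronger regularity ($C^\infty$ densities) and uniqueness of $\Pi$ itself, which are reused elsewhere. Two small points you should make explicit to be complete: (i) the justification that $\int \mathcal{L}f\,d\Pi=0$ for compactly supported smooth $f$ (the standard Dynkin argument $\int(P_tf-f)\,d\Pi=\int_0^t\int \mathcal{L}f\,d\Pi\,du$ for invariant $\Pi$); and (ii) that $H$ cannot vanish at an interior point before you divide by it --- this follows from uniqueness for the linear ODE $H'=-(q_1/g_1+q_2/g_2)H$, since a zero of $H$ would force $H\equiv 0$, contradicting $\Pi\neq 0$.
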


\begin{proof}
Let us assume that $(U_t)_{t>0}$ admits an invariant probability measure $\Pi$ on $(0,1) \times E$. This implies that $s_1 > 0$ and $s_2<0$. Indeed, by Proposition \ref{prop:oneD}, if $\Lambda_0 <0$, $X_t$ converges almost surely to $0$, and in particular cannot admit an invariant probability measure on $(0,1) \times E$. Thus we need $\Lambda_0 \geq 0$ and since we have assumed that $s_1 > s_2$, this implies $s_1 > 0$. For the same reason, $\Lambda_1$ must be non-negative, implying $s_2 < 0$. Thus, all the point in $[0,1]$ are accessible, yielding that the support of $\Pi$ has to be $[0,1] \times E$ (see Proposition 3.17 in \cite{BMZIHP}). Moreover, since for every $x \in (0,1)$, $f^1(x) > 0$, the weak bracket condition hold and $\Pi$ is unique and admits a density with respect to the Lebesgue measure (see \cite{BMZIHP} or \cite{BH12}). Moreover, this also implies by Theorem 1 in \cite{BHM15} that the densities $h_i$ are $C^{\infty}$ on $(0,1)$. Thus, $h_1$ and $h_2$ satisfy the Fokker-Planck equations (see e.g \cite[section 7.2]{BHM15}, \cite{BL16}, or \cite{DNW14}) :
\begin{equation}
    \begin{cases}q_1 h_1 - q_2 h_2 & = -( g_1 h_1)'\\
    q_1 h_1 - q_2 h_2 & = ( g_2 h_2)'
    \end{cases}.
    \label{eq:fokker}
\end{equation}
Now, one can check that the functions given above satisfy these equations. Moreover, since $h_1$ and $h_2$ are densities, they satisfy $\int_0^1 h_1 + h_2 = 1$; in particular they are integrable on $(0,1)$. This is the case if and only if $\Lambda_0 > 0$ and $\Lambda_1 > 0$. Therefore, if you assume that $\Lambda_0=0$ or $\Lambda_1=0$, $(U_t)_{t>0}$ cannot admit an invariant probability measure on $(0,1) \times E$, for otherwise it would have densities satisfying equations \ref{eq:fokker}, hence densities that are not integrable on $(0,1)$, a contradiction.
\end{proof}

\begin{ex}
\label{ex1}
Consider $p_1=p_2$, i.e the jump rate are the same for both environments. So, let simplify conditions 3) of property  \ref{prop:oneD} to obtain  the following conditions of persistence:

\begin{equation*}
  \left\{
      \begin{aligned}
        s_2&<\frac{-s_1}{1+2s_1}\\
        s_2&>-s_1
      \end{aligned}
    \right.
\end{equation*}

Note that if $s_1$ is smaller than $-0.5$ the first condition is automatically verified.

Then take the particular case  $s_1=1$. Then the previous condition to have persistence becomes $s_2<\frac{-1}{3}$.\\
To illustrate, we plot the comportment of the process for two values of $s_2$ close to $\frac{-1}{3}$, $-0.3$ and $-0.4$. 
\end{ex}

\begin{figure}[htbp]
\begin{minipage}[c]{.45\linewidth}
\begin{center}
\includegraphics[scale=0.35]{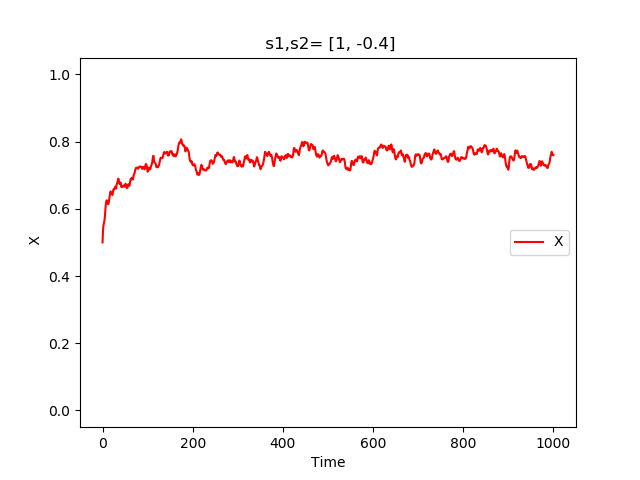}
\end{center}
\end{minipage}
\hfill
\begin{minipage}[c]{.45\linewidth}
\begin{center}
\includegraphics[scale=0.35]{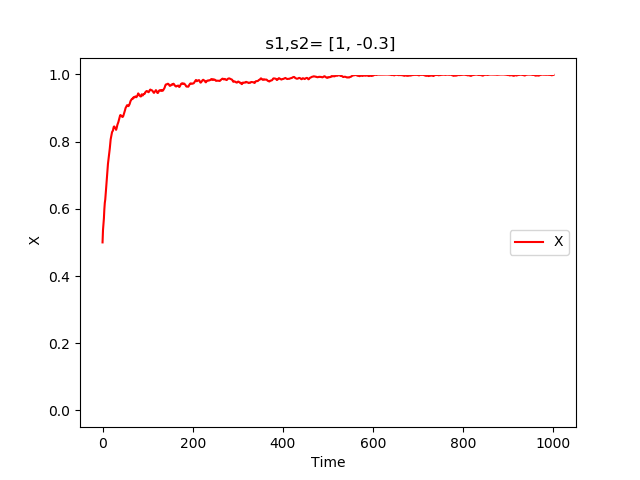}
\end{center}
\end{minipage}
\caption{Plots of the average of $(X_t)_{t>0}$ for different fitnesses, obtained by Monte Carlo method with 500 trajectories.}
\end{figure}

\begin{figure}[htbp]
\begin{minipage}[c]{.45\linewidth}
\begin{center}
\includegraphics[scale=0.35]{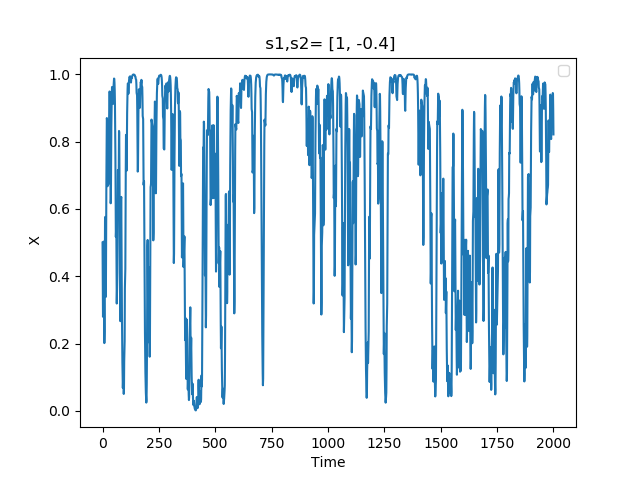}
\end{center}
\end{minipage}
\hfill
\begin{minipage}[c]{.45\linewidth}
\begin{center}
\includegraphics[scale=0.35]{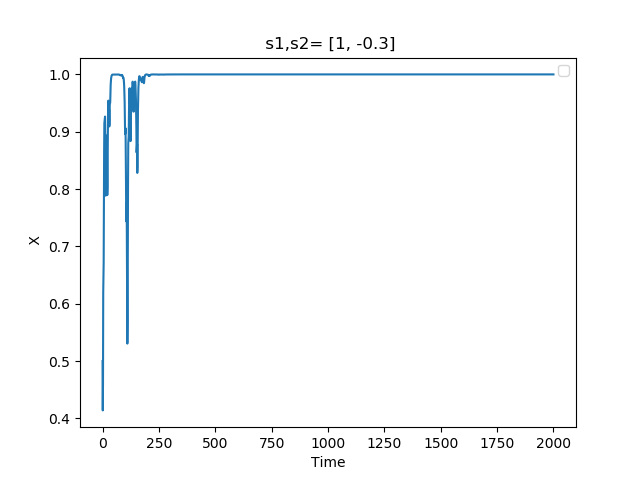}
\end{center}
\end{minipage}
\caption{Trajectory of  $(X_t)_{t>0}$  for two different fitness}
\end{figure}

Note that for $s_2=-0.4$ the process seems to be persistent whereas for $s=-0.3$ it seems to be absorbed quickly. So the numerical simulations are consistent with  the condition giving by Proposition  \ref{prop:oneD} .
\vspace{3mm}

\begin{ex}
This second example illustrate the case 4) of Proposition \ref{prop:oneD}.
Assume as in the previous example $p_1=p_2$ and take $s_1=-s_2$, so $\Lambda_0 = 0$ and $\Lambda_1 = \frac{2 s_1^2}{1 - s^2} > 0$ 

\begin{figure}[htbp]
\begin{minipage}[c]{.45\linewidth}
\begin{center}
\includegraphics[scale=0.35]{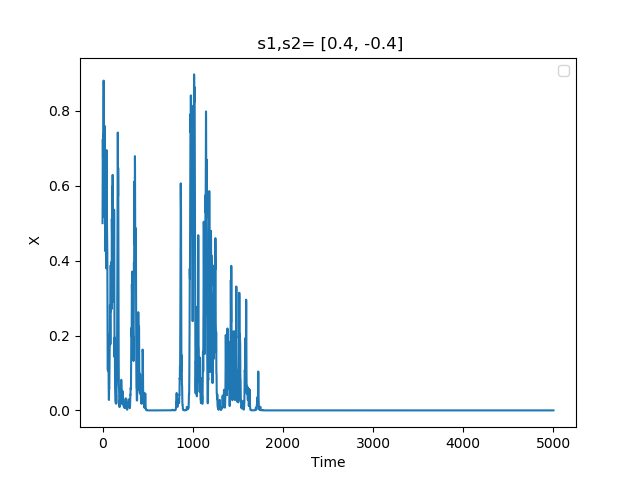}
\end{center}
\end{minipage}
\hfill
\begin{minipage}[c]{.45\linewidth}
\begin{center}
\includegraphics[scale=0.35]{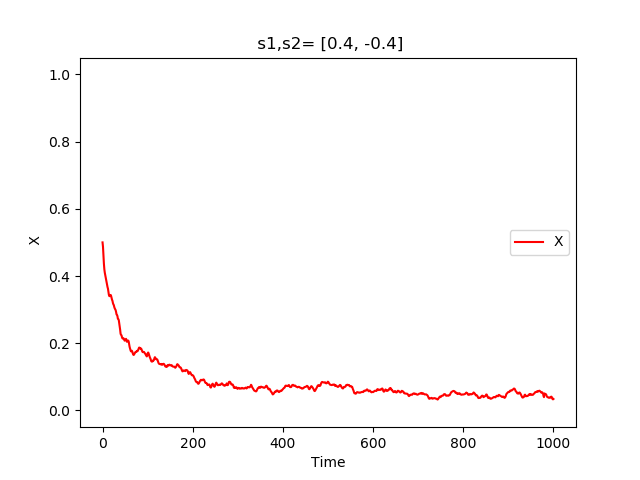}
\end{center}
\end{minipage}
\caption{Trajectories of $(X_t)_{t>0}$ and the average of $(X_t)_{t>0}$ for different fitness, obtain by Monte Carlo method with 500 trajectories. Parameters are $s_1=-s_2=0.4$, $q_{1,2}=q_{2,1}=1/2$. }
\end{figure}
As stipulated in Proposition  \ref{prop:oneD}, the process is not persistent. Moreover, the fact that $\Lambda_1$ is positive prevents species 2 from extinction. The numerical simulations suggest that the first species disappears.
 
\end{ex}

\begin{ex}
In this example, we illustrate Lemma \ref{lem:density}. Recall that if the parameters are such that the process is persistent, then the invariant distribution $\Pi$ on $M_+$ have explicit densities, given by 
\[
h_i(x) = \frac{C}{|s_i|} (1-x)^{\beta \Lambda_1 - 1}x^{\alpha \Lambda_0 -1}(1+s_i x),
\]
where $ \alpha = \frac{q_1 + q_2}{|s_1 s_2|}$ and $\beta = \alpha (1+ s_1)(1+s_2)$. In particular, it can exhibits several behaviour at the boundaries $0$ and $1$, according to the sign of $\beta \Lambda_1 - 1$ and  $\alpha \Lambda_0 -1$. Let us fix $s_2 = -0.2$, and jump rates $q_{1}=q_{2}=q$. In particular, $p_1 = p_2$ and the process is persistent if and only if $ 0.2 < s_1 < 1/3$. Fix $q = 1$. Then, it is easily seen that   $\beta \Lambda_1 - 1 > 0$ if and only if $s_1 < 1/4$; whereas $\alpha \Lambda_0 -1 > 0$ if and only if $s_1 > 1/4$. In particular, if $s_1 \in (0.2, 1/4)$, then $\lim\limits_{x \to 0}h_i(x) = +\infty$ and $\lim\limits_{x \to 1}h_i(x)=0$, whereas if $s_1 \in (1/4, 1/3)$, then we have the converse situation, i.e. $\lim\limits_{x \to 0}h_i(x) = 0$ and $\lim\limits_{x \to 1}h_i(x)= + \infty$. This is illustrated in Figure \ref{fig:densityvariousS}. Now fix $s_2 = -0.2$ and $s_1 =0.27$. Then, once again it is easy to check that  $\beta \Lambda_1 - 1 > 0$ if and only if $q > 0.259/5$; whereas $\alpha \Lambda_0 -1 > 0$ if and only if $q > 10/7$. Thus, we have three regimes :
 \begin{enumerate}
     \item if $q < 0.259/5$, then $h_i(x)$ goes to infinity both at 0 and 1;
     \item if $q \in (0.259/5, 10/7)$, then $h_i(x)$ vanishes at 0 and explodes at 1;
     \item if $q > 10/7$, then $h_i(x)$ vanishes both at 0 and 1.
 \end{enumerate}
 Plots of $h_1$ are presented in Figure \ref{fig:densityvariousq} for situation (1) and (3), situation (2) is plot on the right of Figure \ref{fig:densityvariousS}. This example shows that even if the process is persistent, the stationary distribution $\Pi$ certainly does not give mass to $0$ and $1$, but can be concentrated close to the extinction points. In the example with $s_1 = 0.27$, $s_2 = - 0.2$ and $q < 0.259/5$, the intuition is the following. Since $q$ is very small, the environment $s$ takes a really long time before changing. During this time, the process is getting really close to the boundary (say $0$ if we are following $s_2$), and spend a huge time here. When a switch occurs, the process goes away from $0$ fast enough, and come close to $1$ where it spends again a long time, and so on. In particular, it is much more likely that a switch occurs in the neighbourhood of $0$ or $1$, than in the middle. That is why, the process does not stay for a long time in the middle part, and concentrates near the boundaries. 
 
 \begin{figure}[htbp]
\begin{minipage}[c]{.45\linewidth}
\begin{center}
\includegraphics[scale=0.35]{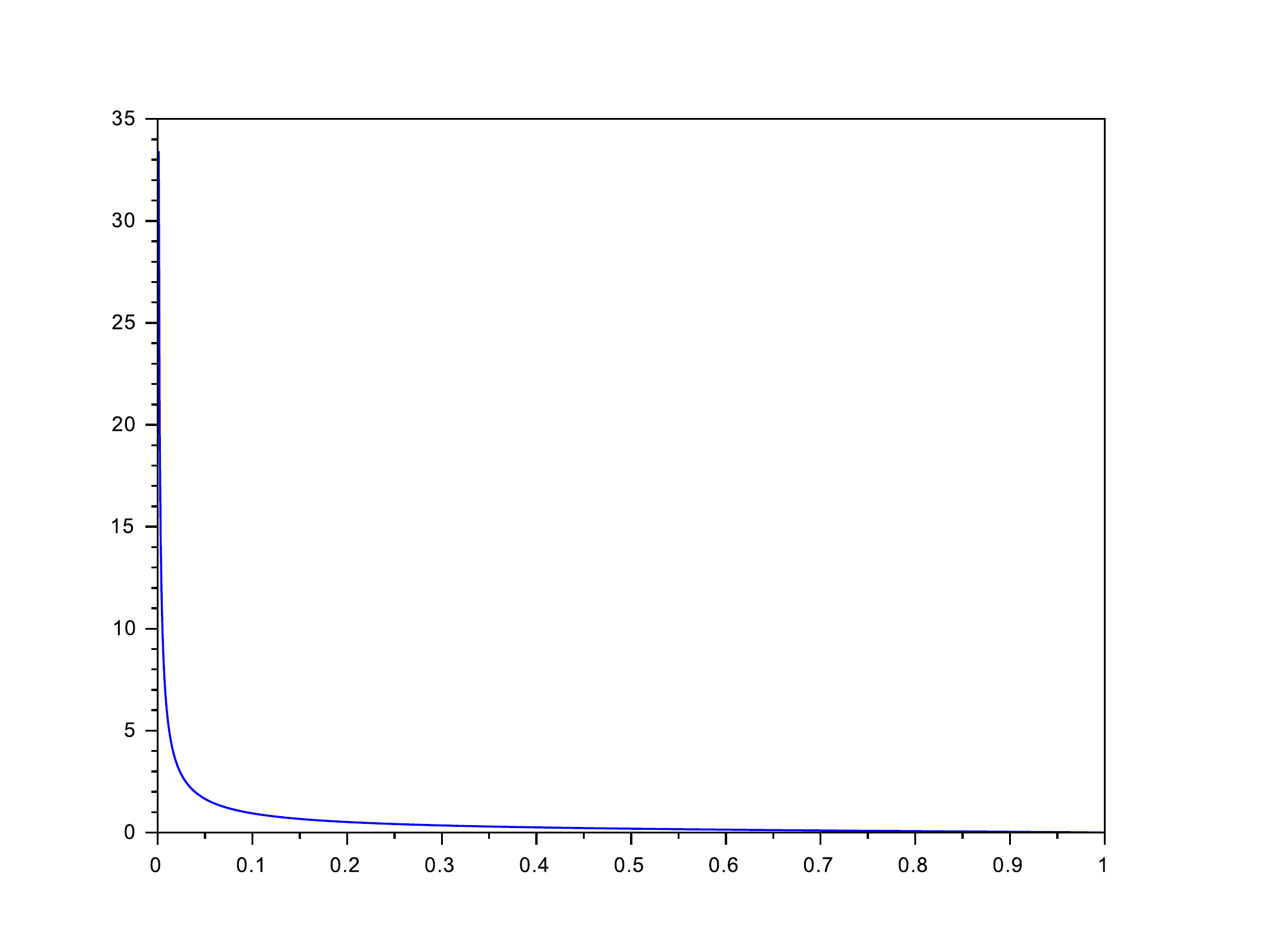}
\end{center}
\end{minipage}
\hfill
\begin{minipage}[c]{.45\linewidth}
\begin{center}
\includegraphics[scale=0.35]{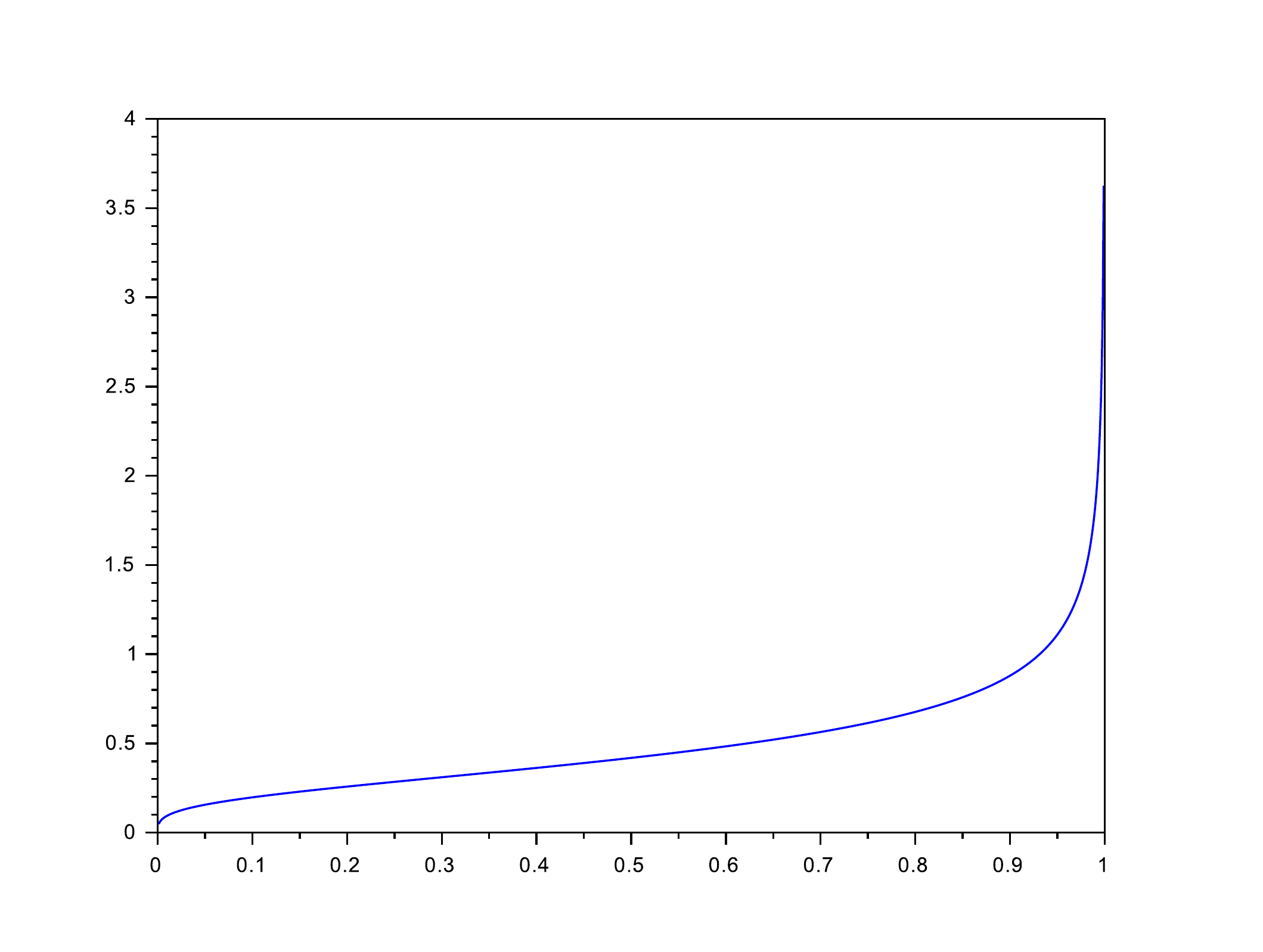}
\end{center}
\end{minipage}
\caption{Plot of $h_1$ for $q=1$ and $s_1 = 0.21$ (left) and $s_1 = 2.07$ (right)}
\label{fig:densityvariousS}
\end{figure}

 \begin{figure}[htbp]
\begin{minipage}[c]{.45\linewidth}
\begin{center}
\includegraphics[scale=0.35]{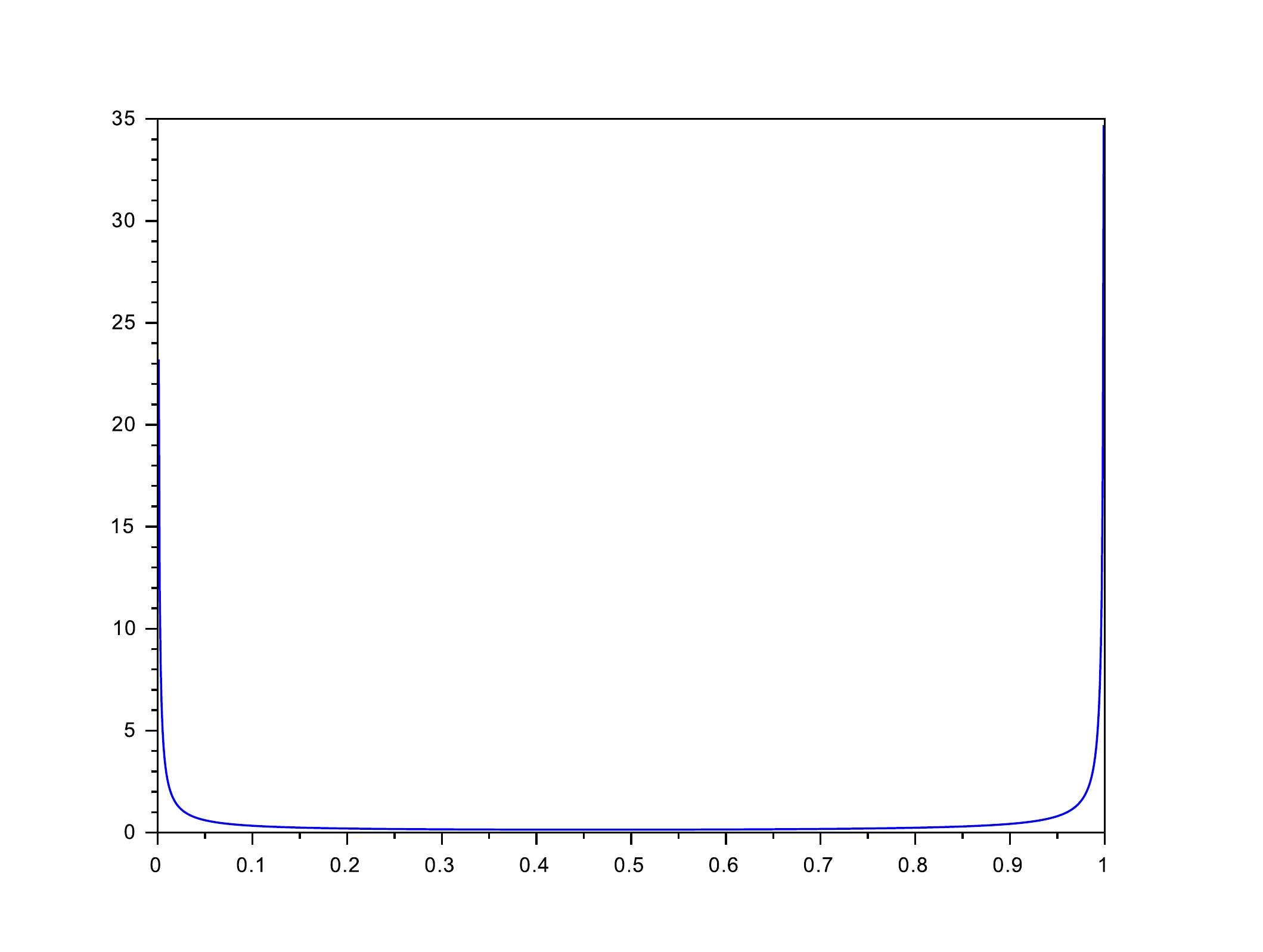}
\end{center}
\end{minipage}
\hfill
\begin{minipage}[c]{.45\linewidth}
\begin{center}
\includegraphics[scale=0.35]{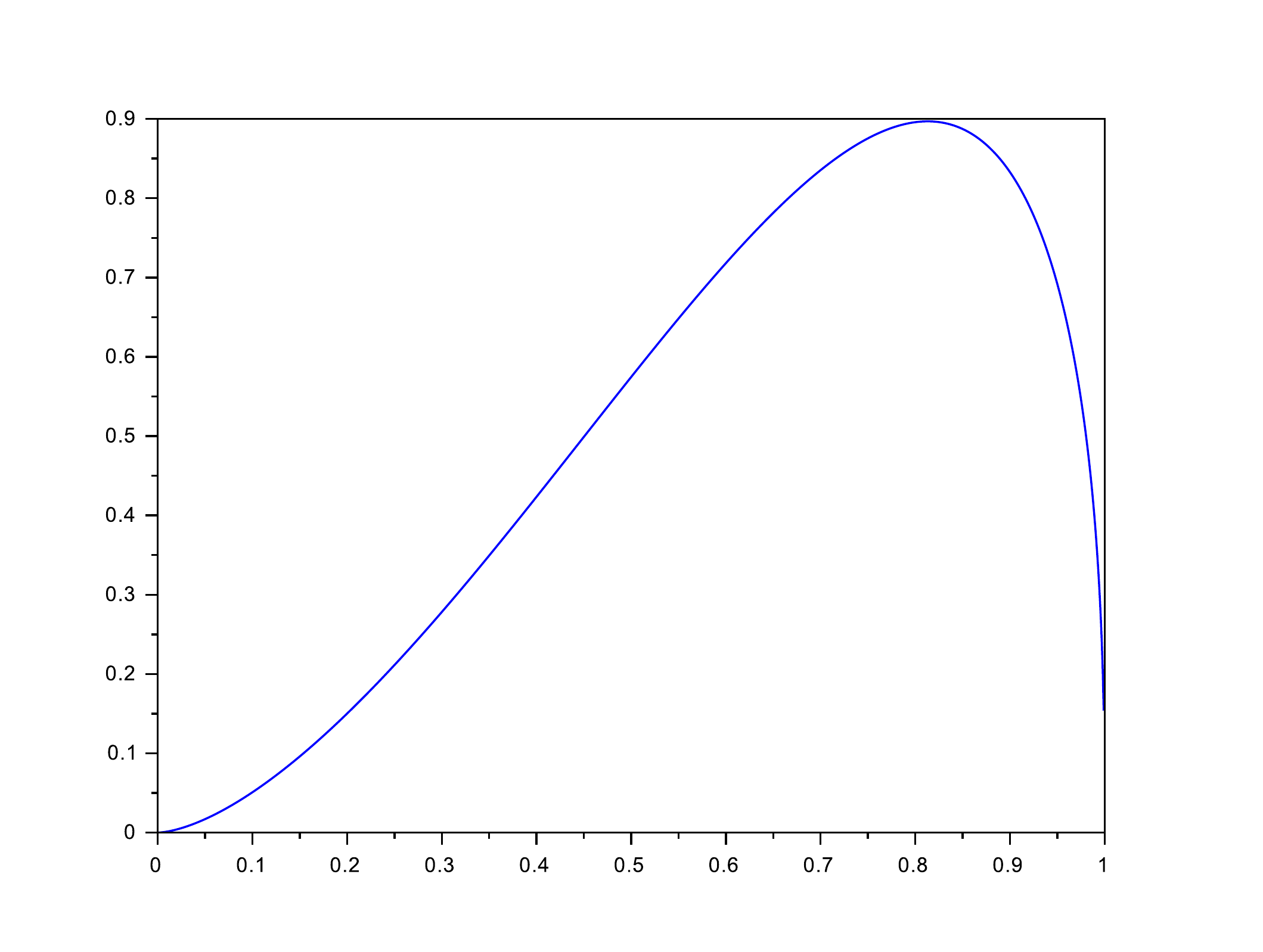}
\end{center}
\end{minipage}
\caption{Plot of $h_1$ for $s_1 = 0.27$  and $q=0.04$ (left) and $q=2$ (right)}
\label{fig:densityvariousq}
\end{figure}

\end{ex}
These examples concludes the study for two species. We now  generalise some  properties to a larger number of species and any number of environments. The space $M$  is no longer a line but a tetrahedron and the extinction set $M_0$ correspond to the face of $M$. Of course some intuitive behavior, as the fact that if a species has always the best fitness, it invades the community, is still true. But many arguments used in previous part to prove these results are specific to dimension one and does not hold anymore. 

\section{General results}
In this section we keep the notations of  part \ref{presentation}. We assume that the Markov chain $(s_t)_{t \geq 0}$ is irreducible on $E$. Hence, it admits a unique invariant probability measure on $E$, denoted by $\mu=p_1 \delta_{s_1} + \ldots + p_K \delta_{s_K}$.  

We describe now some behavior of the process in some remarkable environments.

\subsection{Sufficient conditions for a species to invade (or not) the community}
\label{sec:suffinvade}
In this section, we provide sufficient condition for a species to have a positive probability of invading the community. Furthermore, we prove that if one species have a positive probability to invade the community, then all the other ones cannot invade. For $i_0 \in \{1, \ldots, S+1\}$ , we set 
$$
\Lambda_{i_0} = \max_{  i \neq i_0 } \sum\limits_{j=1}^K  p_j \frac{s_j^i - s_j^{i_0}}{1+s_j^{i_0}} 
$$
 
\begin{theorem}
\label{thm:suffinvade}
Assume that for some $i_0 \in \{1, \ldots, S+1\},$ $\Lambda_{i_0}<0$. Then, for all $\alpha \in (\Lambda,0)$, there exist $\eta > 0$ and a neighbourhood $\mathscr{U}$ of $e_{i_0}$ such that, for all $x \in\mathscr{U}$ and all $\mathbf{s} \in E$,
\[
\mathbb{P}_{(x,\mathbf{s})} \left( \limsup\limits_{t \to \infty} \frac{1}{t} \log \|X_t - e_{i_0}\| \leq   \alpha \right) \geq \eta.
\]
Furthermore, for all $i \neq i_0$, we have $\Lambda_i>0$, and there exist $b > 1$, $\varepsilon > 0$, $\theta$ and $c>0$ such that, for all $x \in \mathscr{E} \setminus \mathscr{E}_0$, $s \in E$ and $i \neq i_0,$ 
\[
\mathbb{E}_{(x,s)}(e^{b \tau_i^{\varepsilon}}) \leq c ( 1 + \|x-e_i\|^{-\theta}),
\]
where
\[
\tau_{i}^{\varepsilon} = \inf\{ t \geq 0 \: : \|X_t - e_i \| \geq \varepsilon\}.
\]
\end{theorem}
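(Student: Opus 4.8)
The plan is to reduce both assertions to the analysis of the linear cocycle attached to the vertices $e_{i_0}$ and $e_i$, exactly as in the proof of Proposition \ref{prop:oneD}, the only genuinely new ingredient being the identification of the relevant Lyapunov exponents with the $\Lambda$'s. Since $e_{i_0}$ is a common zero of every $G_s$, the behaviour of $(U_t)_{t>0}$ near $e_{i_0}$ is governed by the linear process $\dot Y = A^{i_0}_{s_t}Y$ with $A^{i_0}_s = DG_s(e_{i_0})$. This matrix is block upper-triangular, with transverse diagonal block $D_s = \mathrm{diag}\big(\tfrac{s^k-s^{i_0}}{1+s^{i_0}}\big)_{k\neq i_0}$, and its diagonal entries are precisely the $\tfrac{s^k-s^{i_0}}{1+s^{i_0}}$ for $k\neq i_0$ (the top-left entry $\tfrac{-s^{i_0}}{1+s^{i_0}}$ being the value $k=S+1$, where $s^{S+1}=0$). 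The off-diagonal block only couples into the slaved first coordinate and does not increase the top exponent, so the top Lyapunov exponent of the cocycle equals the largest averaged diagonal entry; by Birkhoff's ergodic theorem applied to the invariant measure $\mu$ of $(s_t)$, this is $\max_{k\neq i_0}\sum_{j}p_j\tfrac{s_j^k-s_j^{i_0}}{1+s_j^{i_0}}=\Lambda_{i_0}$.

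For the first assertion, $\Lambda_{i_0}<0$ means this top exponent is negative, so $e_{i_0}$ is a stochastically attracting common zero. I would then quote \cite[Th.~3.1]{BS17}: for every $\alpha\in(\Lambda_{i_0},0)$ there are $\eta>0$ and a neighbourhood $\mathscr{U}$ of $e_{i_0}$ such that $\mathbb{P}_{(x,\mathbf{s})}(\limsup_t \tfrac1t\log\|X_t-e_{i_0}\|\le\alpha)\ge\eta$ for all $x\in\mathscr{U}$ and $\mathbf{s}\in E$, which is the sought bound (here the $\Lambda$ appearing in the statement is $\Lambda_{i_0}$). This is the higher-dimensional analogue of \eqref{inqnobra1}.

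The sign statement $\Lambda_i>0$ for $i\neq i_0$ is purely algebraic. Fix $i\neq i_0$; it suffices to show that the term $k=i_0$ in the maximum defining $\Lambda_i$ is positive. Writing $\tilde s_j=\tfrac{s_j^i-s_j^{i_0}}{1+s_j^{i_0}}$, the fitness normalization of Section \ref{presentationmodele} gives the identity $\tfrac{s_j^{i_0}-s_j^i}{1+s_j^i}=-\tfrac{\tilde s_j}{1+\tilde s_j}$, with $\tilde s_j>-1$. Since the $k=i$ term of $\Lambda_{i_0}$ is exactly $\bar s:=\sum_j p_j\tilde s_j$, we have $\bar s\le\Lambda_{i_0}<0$, and $\bar s>-1$ as a convex combination of the $\tilde s_j$. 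The map $\phi(x)=\tfrac{x}{1+x}$ being concave and increasing on $(-1,\infty)$ with $\phi(0)=0$, Jensen's inequality yields $\sum_j p_j\tfrac{\tilde s_j}{1+\tilde s_j}\le\phi(\bar s)<0$, whence $\sum_j p_j\tfrac{s_j^{i_0}-s_j^i}{1+s_j^i}>0$ and therefore $\Lambda_i>0$.

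Finally, since $\Lambda_i>0$ is the top Lyapunov exponent of the cocycle at $e_i$, the vertex $e_i$ is a repelling common zero, with at least one on-average expanding transverse direction. I would obtain the exponential-moment bound on the exit time $\tau_i^\varepsilon$ from the quantitative repulsion estimates of \cite[Th.~3.2]{BS17} and \cite{S18}, which supply $b>1$ and $\varepsilon,\theta,c>0$ with $\mathbb{E}_{(x,s)}(e^{b\tau_i^\varepsilon})\le c(1+\|x-e_i\|^{-\theta})$, uniformly over $x\in\mathscr{E}\setminus\mathscr{E}_0$ and $s\in E$; the factor $\|x-e_i\|^{-\theta}$ reflects that starting nearer to $e_i$ delays the escape. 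The main obstacle is precisely this last step: verifying that the accessibility and bracket (Hörmander-type) hypotheses of \cite{BS17,S18} hold at the vertices, and extracting a \emph{uniform} exponential moment of $\tau_i^\varepsilon$ with the correct dependence on $\|x-e_i\|$. By contrast, the exponent identification and the convexity argument giving $\Lambda_i>0$ are routine.
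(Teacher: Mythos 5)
Your handling of the first assertion and of the sign claim $\Lambda_i>0$ is correct, and the latter is actually an improvement on the paper: where the paper only says ``as in Proposition \ref{prop:oneD}'' (a computation written out there for two environments only), your Jensen argument --- concavity of $\phi(x)=x/(1+x)$ on $(-1,\infty)$ applied to the normalized fitnesses $\tilde s_j$ --- proves the implication cleanly for any number $K$ of environments. The attraction statement near $e_{i_0}$ also follows the paper's route: linearize at the common zero, identify the maximal averaged exponent of the triangular cocycle with $\Lambda_{i_0}$, and quote the attraction result for stochastically attracting common zeros (the paper cites Theorem 2.7 of \cite{S18}, equivalently Proposition 2.5 of \cite{S18} together with Theorem 3.5 of \cite{BS17}, rather than Theorem 3.1 of \cite{BS17}, but the mechanism is the same).

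The genuine gap is in the last step, and it is not the one you flagged. Positivity of the \emph{top} exponent $\Lambda_i$ does not make $e_i$ a repelling common zero in the sense required by the exit-time estimates of \cite{BS17}: Theorem 3.2 (and Theorem 3.5) there require the \emph{minimal} growth rate over all ergodic measures of the projectivized linearized process to be positive, i.e.\ every transverse direction expanding on average. At $e_i$ the Jacobians are diagonal (after reducing to $i=S+1$, $DG_s(e_{S+1})=\mathrm{diag}(s^1,\ldots,s^S)$), so each coordinate axis carries an ergodic measure with exponent $\sum_j p_j s_j^k$, and under the sole hypothesis $\Lambda_{i_0}<0$ some of these can be negative while the maximum is positive: with three species, $s_j^1>0$ and $s_j^2<0$ on average and $i_0=1$, the vertex $e_3$ is a saddle. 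In that case the hypotheses you propose to ``verify'' are simply false, so no amount of checking accessibility or H\"ormander-type conditions will save the citation. This is why the paper's proof makes a case distinction on $\Lambda_{S+1}^-=\min_{k\le S}\sum_j p_j s_j^k$. When $\Lambda_{S+1}^->0$, your argument applies. When $\Lambda_{S+1}^-<0$, the paper introduces the invariant subface $\mathscr{E}_0^{1,\ldots,k_0}$ obtained by extinction of exactly the species with positive average growth rate, observes that the stable set of the saddle lies inside this face --- which a trajectory started in $\mathscr{E}\setminus\mathscr{E}_0$ never meets --- and invokes the saddle-point result, Theorem 2.8 of \cite{S18}, supplemented by a localization argument as in \cite{HS17} because the accessibility hypothesis of that theorem is not available here. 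This case distinction and the saddle analysis are the missing ideas in your proposal.
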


\begin{proof}
Without loss of generality, we assume that $i_0 = 1$, and we set $\Lambda = \Lambda_1$. We set $M_0^1 = \mathscr{E}_0^1 \times E$ and $M_1^+ = M \setminus M_0^1$.

First, we note that $e_1 = (1, 0, \ldots, 0)$ is a common zero for all the $G_s$, $s \in E$, and that for all $i \geq 2$, $\mathscr{E}_0^i$ is a common invariant face for the process. Moreover, the Jacobian matrix of $G_s$ at $e_1$ is given by
\[
A_s = \begin{pmatrix}
\frac{-s^1}{1+s^1} & *\\
0 & D_s
\end{pmatrix},
\]
where $*$ is a $1 \times (S-1)$ vector and $D_s$ is a $(S-1) \times (S-1)$ diagonal matrix given by
\[
D_s = \mathrm{diag}\left(\frac{s^2 - s^1}{1+s^1}, \ldots, \frac{s^{S} - s^1}{1+s^1}\right).
\]
By assumption, $\Lambda  < 0$, which implies by Theorem 2.7 in \cite{S18} (or Proposition 2.5 in \cite{S18} and Theorem 3.5 in \cite{BS17}) that for all $\alpha \in (\Lambda,0)$, there exists $\eta > 0$ and a neighbourhood $\mathscr{U}$ of $e_{1}$ such that, for all $x \in \mathscr{U}$ and all $\mathbf{s} \in E$,
\[
\mathbb{P}_{(x,\mathbf{s})} \left( \limsup_{t \to \infty} \frac{1}{t} \log \|X_t - e_{1}\| \leq   \alpha \right) \geq \eta,
\]
which concludes the proof of the first assertion. 

We now prove the second assertion. Let $i \neq 1$. Since $\Lambda_1 < 0$, we have
\[
\sum\limits_{j=1}^K  p_j \frac{s_j^i - s_j^{1}}{1+s_j^{1}} < 0 
\]
As in Proposition \ref{prop:oneD}, we can show that this last inequality implies that
\[
\sum\limits_{j=1}^K  p_j \frac{s_j^1 - s_j^{i}}{1+s_j^{i}} > 0 
\]
and thus that $\Lambda_i > 0$.  Without loss of generality, we assume now that $i = S+1$. We set
\[
\Lambda_{S+1}^- = \min_{k \leq S} \sum\limits_{j=1}^K  p_j s_j^k,
\]
 and  we distinguish two cases. If $\Lambda_{S+1}^- > 0$, Proposition 2.5 in \cite{S18} and Theorem 3.5 in \cite{BS17} conclude the proof. If $\Lambda_{S+1}^- < 0$, we assume without loss of generality that there exists $k_0$ such that $\sum\limits_{j=1}^K   p_j s_j^k > 0$ for $k \leq k_0$ and $\sum\limits_{j=1}^K   p_j s_j^k < 0$ for $k > k_0$. We set
\[
\mathscr{E}_0^{1, \ldots, k_0} = \bigcap_{k = 1}^{k_0} \mathscr{E}_0^k,
\]
which is the set of extinction of the $k_0$-th first species. This set is invariant for all the vector fields $G_s$. Moreover, the Jacobian matrix of $G_s$ at $e_{S+1}=0$ is given by
\[
DG_s(e_i) =  \mathrm{diag}\left(s^1 , \ldots, s^{S}\right).
\]
Now, by definition of $k_0$, the assumptions of Theorem 2.8 in \cite{S18}, except for the accessibility of $0$ from $\mathscr{E}_0^{1, \ldots, k_0}$,  are satisfied. However, since \cite{S18}[Theorem 2.8 (iii)] is a local result, and since we get a  probability of convergence to $0$ which is bounded below in a neighbourhood of $0$ in $\mathscr{E}_0^{1, \ldots, k_0}$, a localisation argument similar to the one given in \cite{HS17} enables us to conclude. 
\end{proof}
This theorem states that when $\Lambda_{i_0} < 0$, species $i_0$ has a positive probability to invade the community, while if $\Lambda_{i_0} > 0$, species $i_0$ cannot invade the community. Furthermore, it is only possible to have one species satisfying $\Lambda_{i_0} < 0$. Let us finish by an interesting related result that up to now we can only pose as a conjecture.

\begin{conj}
If $\Lambda_{i_0}<0$, then for all $x$ with $x^{i_0} \neq 0$ and all $s \in E$, 
\[
\mathbb{P}_{(x,\mathbf{s})} \left( \limsup_{t \to \infty} \frac{1}{t} \log \|X_t - e_{i_0}\| \leq   \Lambda_{i_0} \right) = 1.
\]
\end{conj}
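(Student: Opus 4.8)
The conjecture sharpens Theorem~\ref{thm:suffinvade}(1) in three ways at once: the probability is raised from $\eta$ to $1$, the initial condition is allowed to be any $x$ with $x^{i_0}\neq 0$ instead of only $x$ near $e_{i_0}$, and the exponent is lowered from an arbitrary $\alpha\in(\Lambda_{i_0},0)$ to $\Lambda_{i_0}$ itself. I would reduce everything to the single recurrence statement (a): \emph{for every neighbourhood $\mathscr{V}$ of $e_{i_0}$ and every $x\in\mathscr{E}_+^{i_0}$, $\mathbb{P}_{(x,\mathbf{s})}$-almost surely $X_t\in\mathscr{V}$ along a sequence of times tending to infinity.} The two remaining improvements then follow at essentially no cost.

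Granting (a), fix $\alpha\in(\Lambda_{i_0},0)$ and let $\mathscr{U}$, $\eta$ be the neighbourhood and constant of Theorem~\ref{thm:suffinvade}(1). The event $A_\alpha:=\{\limsup_{t\to\infty}\frac1t\log\|X_t-e_{i_0}\|\le\alpha\}$ is shift-invariant, hence a tail event, and by the Markov property $\mathbb{E}_{(x,\mathbf{s})}[\mathds{1}_{A_\alpha}\mid\mathcal{F}_t]=\mathbb{P}_{U_t}(A_\alpha)\ge\eta$ whenever $X_t\in\mathscr{U}$. Taking $\mathscr{V}=\mathscr{U}$ in (a), the process lies in $\mathscr{U}$ along times tending to infinity, so $\mathbb{E}_{(x,\mathbf{s})}[\mathds{1}_{A_\alpha}\mid\mathcal{F}_t]\ge\eta$ for such $t$; letting $t\to\infty$ and invoking L\'evy's zero--one law ($\mathbb{E}[\mathds{1}_{A_\alpha}\mid\mathcal{F}_t]\to\mathds{1}_{A_\alpha}$) forces $\mathds{1}_{A_\alpha}\ge\eta$, hence $\mathds{1}_{A_\alpha}=1$, i.e. $\mathbb{P}_{(x,\mathbf{s})}(A_\alpha)=1$. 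This is the geometric-trials scheme of \cite{BL16,BS17,benaimpersistence} already used for Proposition~\ref{prop:oneD}(2). Finally, intersecting the almost-sure events $A_{\alpha_n}$ over any sequence $\alpha_n\downarrow\Lambda_{i_0}$ with $\alpha_n\in(\Lambda_{i_0},0)$ (possible because $\Lambda_{i_0}<0$) produces the sharp exponent at no extra cost; consistently, $\Lambda_{i_0}$ is the top Lyapunov exponent of the transverse linearisation $\dot Y=A_{s_t}Y$, whose diagonal block $D_s=\mathrm{diag}\big(\tfrac{s^i-s^{i_0}}{1+s^{i_0}}\big)_{i\neq i_0}$ gives $\tfrac1t\log\|Y_t\|\to\max_{i\neq i_0}\sum_j p_j\tfrac{s_j^i-s_j^{i_0}}{1+s_j^{i_0}}=\Lambda_{i_0}$ by Birkhoff's theorem.

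Everything therefore rests on (a), and this is where the genuine difficulty, and the reason the statement remains a conjecture, lies. The obstruction is structural: when $i_0$ is fittest in no environment, Theorem~\ref{th:deterministe} shows that each frozen flow $G_{s_j}$ sends the system to $e_{m(j)}$ with $m(j)=\arg\max_i s_j^i\neq i_0$, so $e_{i_0}$ is a repeller for every constituent vector field and can be reached only through the switching. The hypothesis $\Lambda_{i_0}<0$ gives only local information: for each $i\neq i_0$ the per-capita rate $F_s^i(e_{i_0})=\tfrac{s^i-s^{i_0}}{1+s^{i_0}}$ averages to $\sum_j p_j\tfrac{s_j^i-s_j^{i_0}}{1+s_j^{i_0}}<0$, so there is a Lyapunov function with negative drift near $e_{i_0}$, whereas globally the fields $F_s^i$ change sign and no such drift is manifest. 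I would attempt (a) in two steps. First, accessibility: via the support theorem for PDMPs (\cite{BMZIHP}, \cite{BH12}) it suffices to build, from any interior point, a concatenation of the flows $\Phi^{s_j}$ entering an arbitrary neighbourhood of $e_{i_0}$; the natural candidate applies $G_{s_j}$ for a fraction of time close to $p_j$ so that, by averaging, the composition contracts each transverse coordinate $X^i$ ($i\neq i_0$) while keeping $X^{i_0}$ bounded below, and making this heuristic into a control valid from every interior point is the main analytic task. Second, one must rule out the process getting trapped away from $e_{i_0}$: trapping near another vertex $e_i$ ($i\neq i_0$) is excluded by the exponential escape estimate of Theorem~\ref{thm:suffinvade}(2), and drift into the face $M_0^{i_0}$ is excluded once one checks $\lambda_{i_0}(\nu)>0$ for all $\nu\in\mathscr{P}_{erg}(M_0^{i_0})$, so that Theorem~\ref{thm:persistence}(1) keeps species $i_0$ off its own extinction face; but trapping near lower-dimensional faces or on a hypothetical interior invariant measure must also be discarded, and in full generality this demands control of every ergodic measure carried by $M_0$ and of its invasion rates. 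It is exactly this last requirement that is intractable for $S\ge 2$ and forms the principal obstacle, whereas in the two-species case it is supplied by the explicit computation behind Proposition~\ref{prop:oneD}(2), which is why the result is a theorem there and only a conjecture here.
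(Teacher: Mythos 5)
The statement you were asked to prove is posed in the paper as a \emph{conjecture}: the authors explicitly state they ``can only pose [it] as a conjecture,'' and no proof exists in the paper. Your proposal does not close this gap either, and you say so yourself: everything is conditional on your recurrence statement (a), which is precisely the hard content of the conjecture, so the proposal relocates the difficulty rather than resolving it. That said, the conditional part of your argument is sound: $A_\alpha$ is indeed shift-invariant, the Markov property combined with Theorem \ref{thm:suffinvade}(1) gives $\mathbb{P}(A_\alpha \mid \mathcal{F}_t) \geq \eta$ whenever $X_t \in \mathscr{U}$, L\'evy's zero--one law upgrades positive probability to probability one granted recurrence, and intersecting over $\alpha_n \downarrow \Lambda_{i_0}$ sharpens the exponent for free. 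This is essentially the same machinery the paper deploys in the two cases it \emph{can} prove, Theorem \ref{th:alwaysthebest} and Theorem \ref{thm:neutralinvade}, where the recurrence/zero--one step is packaged into Theorem 3.8 of \cite{BS17} after establishing (i) accessibility of $e_{i_0}$ and (ii) a Lyapunov function (H-persistence) near $\mathscr{E}_0^{i_0}$.

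The concrete gap is that your program for (a) quietly imports hypotheses the conjecture does not grant. You need (i) accessibility of $e_{i_0}$ from every point of $\mathscr{E}_+^{i_0}$, and (ii) $\lambda_{i_0}(\nu) > 0$ for \emph{every} $\nu \in \mathscr{P}_{erg}(M_0^{i_0})$ so that Theorem \ref{thm:persistence}(1) repels the process from the face $M_0^{i_0}$; neither is known to follow from $\Lambda_{i_0} < 0$ alone, which is the only hypothesis available. In the proved special cases both are verified by hand using structure that is absent in general: in Theorem \ref{th:alwaysthebest}, $F_s^{i_0} > 0$ on all of $\mathscr{E}_0^{i_0}$ makes (ii) trivial, and every frozen flow converges to $e_{i_0}$, giving (i); in Theorem \ref{thm:neutralinvade}, (i) comes from the convex-combination trick of Lemma \ref{lem:accessconjug} (which works only because $\Lambda_0^1, \Lambda_0^2 < 0$ makes species $i_0$ the fittest for the averaged field), and (ii) comes from the explicit two-species densities of Lemma \ref{lem:density} together with Proposition \ref{prop : impossiblepersis} --- computations tied to $S=2$, $K=2$. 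A complete proof would have to either derive (i) and (ii) from $\Lambda_{i_0}<0$, or show the conjecture is false without them; your text acknowledges this but does not resolve it. Two smaller points: trapping must also be excluded near lower-dimensional parts of $M_0$ not covered by Theorem \ref{thm:suffinvade}(2), which your outline mentions only in passing; and your Birkhoff remark identifying $\Lambda_{i_0}$ as the top Lyapunov exponent of $\dot Y = A_{s_t} Y$ is valid here only because the transverse blocks $D_s$ are diagonal and hence commute --- for general switched linear systems the exponent is not the averaged environment-wise maximum.
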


\subsection{The same species has always the best fitness}
\quad\\
\label{bestfitness}
In this section, we prove that if one of the species has always the best fitness, then this species take the upper hand on every over one. 
We have the following result, which generalizes Theorem \ref{th:deterministe}:

\begin{theorem}
\label{th:alwaysthebest}
Assume that for all $j \in \{1,\ldots,K\}$ and $i \in \{2, \ldots, S+1\}$, one has $s_j^1 > s_j^k$, and set
\[
\Lambda = \max_{2 \leq i \leq S+1 } \sum_{j=1}^K p_j \frac{s_j^i - s_j^1}{1+s_j^1} < 0.
\]
Then, for all $x \in \mathscr{E}$ with $x^1 > 0$ and all $s \in E$, one has
\[
\pp_{x,s}\left( \limsup_{t \to \infty} \frac{1}{t} \log \| X_t - e_1 \| \leq \Lambda \right) = 1.
\]
\end{theorem}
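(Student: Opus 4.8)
The plan is to bypass the general persistence machinery of \cite{S18} and \cite{BS17} invoked for Theorem \ref{thm:suffinvade}, and instead obtain the almost sure statement directly, exploiting the pathwise domination that becomes available because species $1$ is fittest in \emph{every} environment. Fix $x$ with $x^1 > 0$. Since $\{X^1 = 0\}$ is invariant and the dynamics is multiplicative, $X^1_t = x^1 \exp\big(\int_0^t F^1_{s_u}(X_u)\,du\big) > 0$ for all $t$, so the quantities below are well defined; moreover $F^1_{s_t}(X_t) = \frac{\sum_k (s_t^1 - s_t^k)X_t^k}{1 + \sum_k s_t^k X_t^k} \geq 0$, so $X^1_t$ is non-decreasing and increases to a positive limit. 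Writing $I_+ = \{i \neq 1 : x^i > 0\}$, the species outside $I_+ \cup \{1\}$ satisfy $X^i_t \equiv 0$, so only $I_+$ contributes to $\|X_t - e_1\|$ and it suffices to control $X^i_t$ for $i \in I_+$.

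The key computation is for the logarithmic ratio: for $i \in I_+$,
\[
\frac{d}{dt}\log\frac{X^i_t}{X^1_t} = F^i_{s_t}(X_t) - F^1_{s_t}(X_t) = \frac{s_t^i - s_t^1}{1 + \sum_{k} s_t^k X_t^k}.
\]
Because $s_t^1 > s_t^k$ for every $k$ in every environment, the numerator is negative; moreover $\sum_k s_t^k X_t^k \leq s_t^1$ on the simplex, while $s_t^k > -1$ gives $\sum_k s_t^k X_t^k > -1$. Hence $0 < 1 + \sum_k s_t^k X_t^k \leq 1 + s_t^1$, and replacing the denominator by the larger quantity $1 + s_t^1$ only increases the (negative) fraction, so
\[
\frac{d}{dt}\log\frac{X^i_t}{X^1_t} \leq \frac{s_t^i - s_t^1}{1 + s_t^1}.
\]
Integrating and invoking the ergodic theorem for the irreducible finite-state jump process $(s_t)$, whose empirical occupation measure converges almost surely to $\mu = \sum_j p_j \delta_{s_j}$, yields almost surely
\[
\limsup_{t \to \infty} \frac{1}{t}\log\frac{X^i_t}{X^1_t} \leq \lim_{t\to\infty}\frac{1}{t}\int_0^t \frac{s_u^i - s_u^1}{1+s_u^1}\,du = \sum_{j=1}^K p_j \frac{s_j^i - s_j^1}{1+s_j^1} \leq \Lambda.
\]

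To finish, I use $X^1_t \leq 1$, hence $\frac{1}{t}\log X^1_t \leq 0$, to transfer the bound to $X^i_t$ itself: $\limsup_t \frac{1}{t}\log X^i_t \leq \limsup_t \frac{1}{t}\log\frac{X^i_t}{X^1_t} \leq \Lambda$ for each $i \in I_+$. Since $\|X_t - e_1\|$ is comparable to $\sum_{i \neq 1} X^i_t = 1 - X^1_t$, and a finite maximum commutes with $\limsup$ (using $\sum_i a_i \leq (\#I_+)\max_i a_i$), I conclude that $\limsup_t \frac{1}{t}\log\|X_t - e_1\| \leq \Lambda$ holds on a set of probability one, which is the claim.

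The main obstacle is the algebraic heart of the argument: recognizing that the log-derivative of $X^i_t/X^1_t$ is dominated \emph{pathwise} by the frozen-environment rate $\frac{s_t^i - s_t^1}{1+s_t^1}$, whose $\mu$-average is exactly $\Lambda$. This domination hinges crucially on species $1$ being fittest in every environment, so that $s_t^i - s_t^1 < 0$ for all $t$; it is precisely this pathwise sign control that upgrades the positive-probability statement of Theorem \ref{thm:suffinvade} to an almost sure one via the ergodic theorem alone. In the general situation of the conjecture, where $s_t^i - s_t^1$ may change sign along the trajectory, this elementary domination breaks down and a genuine excursion/recurrence analysis in the spirit of \cite{S18} would be required.
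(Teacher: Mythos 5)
Your proof is correct, and it takes a genuinely different route from the paper's. The paper deduces this theorem from the stochastic-persistence machinery: it first invokes Theorem \ref{thm:suffinvade} (itself resting on \cite{S18} and \cite{BS17}) to get convergence to $e_1$ with positive probability from a neighbourhood, then proves $e_1$ is accessible from $\mathscr{E}_1^+$ (it is a globally asymptotically stable equilibrium there for every $G_s$), shows the face $\mathscr{E}_0^1$ is repulsive because $F^1_s>0$ on it — hence $H$-persistence and a Lyapunov function via \cite[Proposition 8.2]{benaimpersistence}, verifying Hypothesis 3.7 of \cite{BS17} — and concludes with Theorem 3.8 of \cite{BS17}. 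You bypass all of this: the identity $\frac{d}{dt}\log(X^i_t/X^1_t)=\frac{s^i_t-s^1_t}{1+\sum_k s^k_t X^k_t}$, the sign condition $s^i_t<s^1_t$, and the bound $0<1+\sum_k s^k_t X^k_t\le 1+s^1_t$ (a convex combination of the $s^k_t$ cannot exceed the maximum $s^1_t$) give a pathwise domination by $g_i(s_t)=\frac{s^i_t-s^1_t}{1+s^1_t}$, a function of the environment alone; since $(s_t)$ is an autonomous irreducible finite-state chain, its ergodic theorem applies irrespective of the coupling with $X_t$, and your transfer steps ($\log X^1_t\le 0$, invariance of the faces for initially extinct species, equivalence of norms, and commuting a finite maximum with $\limsup$) are all sound. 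What your argument buys is an elementary, self-contained proof giving the sharp rate $\Lambda$ almost surely in one stroke; what the paper's scheme buys is robustness: the persistence/accessibility route does not require the fitness ranking to be the same in every environment, which is why it also powers Theorem \ref{thm:suffinvade} and is the natural avenue toward the Conjecture, where, as you note, $s^i_t-s^1_t$ changes sign and pathwise domination collapses.
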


\begin{proof}
We use Theorem 3.8 in \cite{BS17}. By Theorem \ref{thm:suffinvade}, we know that  for all $\alpha \in (\Lambda,0)$, there exists $\eta > 0$ and a neighbourhood $\mathscr{U}$ of $e_{i_0}$ such that, for all $x \in \mathscr{U}$ and all $\mathbf{s} \in E$,
\[
\mathbb{P}_{(x,\mathbf{s})} \left( \limsup_{t \to \infty} \frac{1}{t} \log \|X_t - e_{i_0}\| \leq   \alpha \right) \geq \eta.
\]
Furthermore, since $e_1$ is an asymptotically stable equilibrium whose basin of attraction is $\mathscr{E}_1^+$ for each $G_s$, we deduce that the point $e_1$ is accessible from $\mathscr{E}_1^+$ for the PDMP. This means that each neighbourhood of $e_1$ can be reached with positive probability by the process (see \cite{BMZIHP} for a precise definition). 
Since the set $\mathscr{E}_1^+$ is not compact, we have to study the behavior of the process near $\mathscr{E}_0^1$ to conclude that the process converges to $e_1$ with probability one from everywhere in $\mathscr{E}_1^+$. For all $s \in E$, we have
\[
F^1_s(x) = \frac{s^1 - \sum\limits_{i=1}^S s^ i x^i}{1 + \sum\limits_{i=1}^S s^ i x^i}
\]
By assumption, for all $x \in \mathscr{E}_0^1$, one has $F^1_s(x) > 0$. This implies that for all $\mu \in \mathscr{P}_{erg}(\mathscr{E}_0^1)$, $\lambda_1(\mu) > 0$. In particular, the process is $H$ - persistent with respect to $\mathscr{E}_0^1$. Therefore, by \cite[Proposition 8.2]{benaimpersistence}, there exists a Lyapunov function for the process near $\mathscr{E}_0^1$ and thus Hypothesis 3.7 in \cite{BS17} is satisfied for the set $M_1^+$. This concludes the proof by Theorem 3.8 in \cite{BS17}.
\end{proof}

\subsection{A species is always disadvantaged with respect to the other species in each environment}

\quad\\
With no loss of generality, we assume the species $1$ is always disadvantaged with respect to the other species. More precisely for each environment, the species 1 has a negative fitness. We assume moreover that the last species is not extinct, i.e $1-\sum\limits_{k=1}^Sx^k>0$.

We now prove in this situation that the species 1 is going to 0. The strategy is  to show that the vector field is always entering in  $E_{b}=\{(x^1,...,x^S)\in E\: :  x_1<b(1-\sum\limits_{k=2}^Sx^k)\}\}$, for all $b \in ]0,1[$. Hence, if the process enters in $E_b$, it cannot escape $E_b$. Then, we prove, for all $ b \in ]0,1[$, the hitting time of $E_b$ is finite.

Let us consider the hyperplane of the form $\Delta_b :\{(x^1,...,x^S) \in E \: :x_1=b(1-\sum\limits_{k=2}^Sx^k)\}$ for $b$ in $]0,1[$. This is the hyperplane passing through the points $be_1,e_2,...,E$, where $e_i$ is the $i^{th}$ vector of the natural base, and $E_b$ is the area under $\Delta_b$. The following proposition proves that  for each environment, the vector field is entering in $E_b$.

\begin{prop}
Let $\mathbf{s}_i$  be such that  $s^1_i<0$ for $i \in {1,..,K}$,  then for all $b$ in $]0,1[$,  $E_b$ is a trap area for the process $X_t$. 
\end{prop}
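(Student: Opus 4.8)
The claim is that the region
\[
E_b = \Bigl\{ (x^1,\ldots,x^S) \in \mathscr{E} \: : \: x^1 < b\Bigl(1 - \sum_{k=2}^S x^k\Bigr) \Bigr\}
\]
is forward-invariant (a ``trap'') for the flow of every vector field $G_{\mathbf{s}_i}$ when $s^1_i < 0$ for all $i$. Since the hyperplane $\Delta_b$ is exactly $\{x^1 = b(1-\sum_{k=2}^S x^k)\}$, i.e. $\{x^1 = b\,x^{S+1} + b\,x^1\}$ after using $x^{S+1} = 1 - \sum_{k=1}^S x^k$, it is cleaner to write the defining inequality as $x^1 \le b\,x^{S+1}/(1-b)$, or equivalently to introduce the function $\varphi(x) = x^1 - b\bigl(1 - \sum_{k=2}^S x^k\bigr)$ so that $E_b = \{\varphi < 0\}$ and $\Delta_b = \{\varphi = 0\}$.

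**Plan.** The standard way to prove forward-invariance is to check that the vector field points strictly inward along the boundary $\Delta_b$. Concretely, I would compute the directional derivative of $\varphi$ along the flow, $\dot\varphi = \sum_{j=1}^S \partial_{x^j}\varphi \cdot G^j_{\mathbf{s}_i}(x)$, and show that $\dot\varphi < 0$ (or $\le 0$) at every point of $\Delta_b \cap \mathscr{E}$ with $x^{S+1} > 0$. Since $\partial_{x^1}\varphi = 1$ and $\partial_{x^k}\varphi = b$ for $2 \le k \le S$, this reduces to showing
\[
G^1_{\mathbf{s}_i}(x) + b\sum_{k=2}^S G^k_{\mathbf{s}_i}(x) < 0 \quad \text{on } \Delta_b.
\]
Recalling $G^j_s(x) = x^j F^j_s(x)$ with a common positive denominator $1 + \sum_k s^k x^k$, I can clear that denominator and work only with the numerators $x^j(s^j - \sum_k s^k x^k)$. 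On $\Delta_b$ one substitutes $x^1 = b(1 - \sum_{k=2}^S x^k)$, which should produce a factor allowing the hypothesis $s^1_i < 0$ together with $b > 0$ to force the sign. The key structural fact to exploit is that species $1$ always has negative fitness, so the $x^1$-component of the drift is pulled down relative to the others; the weighting by $b$ on the competing coordinates is precisely what makes the combination negative on the slanted face.

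**Main obstacle.** The delicate point will be handling the behavior as $x^{S+1} \to 0$, i.e. near the face where the last species goes extinct, since the hypothesis explicitly assumes $1 - \sum_{k=1}^S x^k > 0$; there the strict inward-pointing may degenerate and one must argue that the relevant part of $\Delta_b$ stays in the interior, or treat the limiting boundary faces $\mathscr{E}_0^k$ separately (they are themselves invariant, so the flow cannot cross them). I would first verify the strict inequality $\dot\varphi < 0$ on the relatively open part of $\Delta_b$ where all coordinates are positive, then argue by invariance of the coordinate faces that a trajectory starting inside $E_b$ cannot leave through $\Delta_b$ nor through any face, so $E_b$ is forward-invariant. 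Once the sign computation is pinned down the rest is routine; I expect essentially all the work to be in verifying the inequality displayed above after substituting the equation of $\Delta_b$ and simplifying, so I would carry that algebra out carefully rather than the topological packaging.
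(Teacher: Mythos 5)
Your proposal is correct and follows essentially the same route as the paper: your function $\varphi$ has gradient $(1,b,\ldots,b)$, which is exactly the outward normal $v_b$ the paper projects the vector field onto, and after clearing the common positive denominator and substituting $x^1=b\bigl(1-\sum_{k=2}^S x^k\bigr)$ the expression collapses to $b\,s^1\bigl(1-\sum_{k=1}^S x^k\bigr)<0$, just as you anticipate. Your extra care about the degenerate locus $x^{S+1}=0$ (where the inequality becomes an equality) is a sound refinement of a point the paper passes over, but it does not change the argument.
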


\begin{proof}
Consider a fixed environment and note $s^k$ the fitness of the $k^{th}$ species.  
Let us start by remarking that $v_b=(1,b,...,b)$ is an orthogonal vector of the hyperplane $\Delta_b$, pointing outward $E_b$. 
Now, we look at the sign of  projection of the vector field $\big(dX^1,..., dX^S\big)$ at the point $\delta_b=\big(b(1-\sum\limits_{k=2}^Sx^k),x^2,...,x^S\big)$ of $\Delta_b$ on $v_b$. If the result is strictly negative for all $x$ and $b$ in $]0,1[$, along  $\Delta_b$, the vector field is entering in $E_b$.\\
\begin{align*}
    &\Big\langle\big(dX^1,...,dX^S\big)\big(\delta_b\big);v_b \Big\rangle<0\\
    &\Leftrightarrow b\left(1-\sum\limits_{k=2}^Sx^k\right)\left(s^1-\sum\limits_{k=1}^Ss^k x^k\right)+b\left(\sum\limits_{k=2}^Sx^k\Big(s^k-\sum\limits_{k=1}^S x^ks^k\Big)\right)<0\\
    &\Leftrightarrow bs^1\left(1-\sum\limits_{k=1}^Sx^k\right)<0\\
    &\Leftrightarrow s^1<0
\end{align*}

Therefore, the vector field is entering in $E_b$ if and only if $s^1<0$. But we assumed $s^1_i<0$ for each $i$, so  for all environments the vector field is entering in $E_b$.  This concludes the proof.
\end{proof}

\begin{cor} 
\label{droiteextinctioncorr}
Assume that $s_i^1<0$ for each $i$. Then the process $(X_t)_{t>0}$, starting at $(x^1, \ldots, x^{S+1})$ such that $1 - \sum\limits_{k=1}^S x^k > 0$, verifies $\lim\limits_{t\rightarrow\infty} X^1_t=0$.
 In particular  species $1$ goes to extinction.
\end{cor}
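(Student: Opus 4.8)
The plan is to combine the trap-region property just established with a classical argument showing that the process must eventually enter every such region. By the preceding proposition, for each $b \in \,]0,1[\,$ the set $E_b$ is forward-invariant under all the vector fields $G_s$ simultaneously, since along $\Delta_b$ the field points strictly inward for every environment (the computation reduced to $s^1 < 0$, which holds in each environment by assumption). Because invariance holds for every $G_s$, it holds for the PDMP regardless of the switching: once $X_t$ enters $E_b$, it stays in $E_b$ for all later times. Thus it suffices to prove that the hitting time $\tau_b = \inf\{t \geq 0 : X_t \in E_b\}$ is almost surely finite for every $b$, and then let $b \to 0$.

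The key step is therefore to control the quantity $X^1_t / (1 - \sum_{k=2}^S X^k_t)$, which measures how far $X_t$ sits above the hyperplane $\Delta_b$; entering $E_b$ is exactly the event that this ratio drops below $b$. First I would write $R_t := X^1_t / (1 - \sum_{k=2}^S X^k_t) = X^1_t / (X^1_t + X^{S+1}_t)$ and compute $\frac{d}{dt}\log R_t$ along the flow. Using the form $\frac{dX^i_t}{dt} = X^i_t F^i_{s_t}(X_t)$, one finds that $\frac{d}{dt}\log R_t = F^1_{s_t}(X_t) - $ (the corresponding growth rate of $X^1 + X^{S+1}$), and the sign computation from the proposition shows this logarithmic derivative is bounded above by a strictly negative constant whenever $s^1_{s_t} < 0$, uniformly over the environments. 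Concretely, since every environment has $s^1_i < 0$, there is a $\delta > 0$ with $\frac{d}{dt}\log R_t \leq -\delta$ as long as $X_t$ stays in the complement of $E_b$ (where $R_t \geq b$ keeps the relevant terms bounded away from degeneracy). This gives $R_t \leq R_0 e^{-\delta t}$ until the process enters $E_b$, forcing $\tau_b < \infty$ deterministically.

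Having established $\tau_b < \infty$ and forward-invariance of $E_b$, I would conclude as follows: for every $b$, the process eventually lies in $E_b$ and never leaves, so $\limsup_{t \to \infty} R_t \leq b$; since this holds for all $b \in \,]0,1[\,$, we get $\lim_{t \to \infty} R_t = 0$, and because $R_t = X^1_t / (X^1_t + X^{S+1}_t)$ with the denominator bounded above by $1$, this yields $\lim_{t \to \infty} X^1_t = 0$, which is the claimed extinction of species $1$.

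The main obstacle I anticipate is making the uniform negativity of $\frac{d}{dt}\log R_t$ genuinely uniform in the relevant region. The raw computation gives a logarithmic derivative proportional to $s^1(1 - \sum_k X^k) / (\text{denominators})$, and one must check that on the complement of $E_b$ — i.e.\ where $X^1 \geq b(1 - \sum_{k \geq 2} X^k)$ — the factor $1 - \sum_{k=1}^S X^k = X^{S+1}$ and the denominators $1 + \sum s^k X^k$ stay controlled, so that no vanishing rate spoils the exponential decay. The hypothesis $1 - \sum_{k=1}^S x^k > 0$ at the start, together with invariance of $\mathscr{E}_+^{S+1}$, ensures $X^{S+1}_t$ stays positive; the delicate point is that as the ratio decays the decay rate might weaken, but since we only need $\tau_b < \infty$ for each fixed $b > 0$ (not a uniform rate as $b \to 0$), the bound $R_t \geq b$ on the complement supplies exactly the lower bound on $X^1 + X^{S+1}$ needed to keep $\delta = \delta(b) > 0$ bounded away from zero on that region.
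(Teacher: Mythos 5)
Your strategy --- forward-invariant trap regions $E_b$ plus a decay estimate on the ratio $R_t = X^1_t/(X^1_t+X^{S+1}_t)$ --- is sound and close in spirit to the paper's own proof (the paper's quantity $b_t$ is exactly your $R_t$), but the justification of your key uniform bound contains a genuine error. The computation gives
\[
\frac{d}{dt}\log R_t \;=\; \frac{s_t^1\,(1-R_t)}{1+\sum_{k=1}^S s_t^k X_t^k},
\]
so the decay rate is proportional to $1-R_t = X^{S+1}_t/(X^1_t+X^{S+1}_t)$. Your claim is that on the complement of $E_b$, i.e.\ on $\{R \geq b\}$, this rate is bounded away from $0$ because ``$R_t \geq b$ supplies the lower bound on $X^1+X^{S+1}$ needed to keep $\delta(b)>0$''. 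This fails on two counts: $R \geq b$ gives no lower bound on $X^1+X^{S+1}$ (take $X^1=\varepsilon b$, $X^{S+1}=\varepsilon(1-b)$, $\varepsilon \to 0$), and in any case such a bound is not what is needed, since the rate degenerates when $1-R_t \to 0$, not when $X^1+X^{S+1}\to 0$. The region $\{R \geq b\}$ contains points with $R$ arbitrarily close to $1$ (i.e.\ $X^{S+1}$ arbitrarily small compared to $X^1$), where the logarithmic derivative is arbitrarily close to $0$; so no region-wise constant $\delta(b)$ exists, and the step ``exponential decay until $\tau_b$, hence $\tau_b<\infty$'' is unproved as written.

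The repair is short and comes from your own formula: since $s_i^1<0$ in every environment, $\frac{d}{dt}\log R_t \leq 0$ always, so $R_t$ is non-increasing and $R_t \leq R_0 < 1$ for all $t$, where $R_0<1$ precisely because $X_0^{S+1}=1-\sum_{k\leq S}x^k>0$. Hence $1-R_t \geq 1-R_0$ along the entire trajectory, and since $1+\sum_k s_t^k X_t^k$ is bounded above uniformly, one gets $\frac{d}{dt}\log R_t \leq -\delta$ with $\delta = \big(\min_i |s_i^1|\big)(1-R_0)\big/\big(1+\max_{i,k}|s_i^k|\big)>0$, uniformly in time and environment. Then $R_t \leq R_0 e^{-\delta t}\to 0$ and $X^1_t \leq R_t \to 0$; the trap sets, the hitting times and the limit $b\to 0$ all become superfluous. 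Note that this corrected argument is genuinely different from (and stronger than) the paper's: the paper stays qualitative, using monotonicity of $b_t$ to extract a limit $b_{\mathrm{lim}}$ and then an invariant-limit-set argument to exclude $b_{\mathrm{lim}}>0$, whereas the quantitative route yields exponentially fast extinction of species $1$.
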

\begin{proof}

Denote now by $b_t$ the intersection between the hyperplane passing through the points $X_t,e_2,...,E$ and the straight line directed by $e_1$. By the previous propriety ,  for all $t$ , $E_{b_t}$ is a trap area.
Moreover the species $1$ has in each environment a strictly negative fitness, so by the previous calculation the vector field on $\Delta_{b_t}$ is strictly entering. Consequently, $b_t$ is strictly decreasing and $b_t$ converges to $b_{lim}$. Note that by definition and the previous proposition, for all $t \geq 0$, $X_t^1 \leq b_t$. Hence, if $b_{lim}=0$,  $X^1_t$ converges to $0$.

If   $b_{lim}\neq 0$, the process converges to $\Delta_{b_{lim}}$. Necessarily the process converges to an invariant area included in $\Delta_{b_{lim}}$. 
But according to the previous proposition, on each point of $\Delta_{b_{lim}}$ where $\left(1-\sum\limits_{k=1}^Sx^k\right)\neq 0$, all the vector fields are strictly entering in $E_{b_{lim}}$. So  this invariant area is also included in $\left(1=\sum\limits_{k=1}^Sx^k\right)$, which implies $X^1 = b_{lim} X^1$. Since $b_{lim} \neq 0$, we conclude that $X^1 = 0$. Thus, species one goes to extinction in all cases.

\end{proof}
\begin{remarque}
A immediate corollary of this property  is, if a species has always the best fitness in each environment, then it invades the community. Note that Theorem \ref{th:alwaysthebest} above proves this fact independently and gives the rate of convergence.
\end{remarque}
 
Let us now give an example to illustrate this property .
\begin{ex}
Consider the case of three species and two environments, we take the notation of part \ref{3species}.\\
Assume the species $Y$ has always a negative fitness, and the fitness are ordered like :
\begin{center}
$
 \left\{
    \begin{array}{ll}\label{fitdefa}
       s_1>0>r_1 \\
       0>r_2>s_2
    \end{array}
\right.
$

\end{center}
In Figure \ref{fig:exYnegfit} are plotted the phase portrait of the vectors fields for each environment.

\begin{figure}[htbp]
\label{fig:exYnegfit}
\begin{minipage}[c]{.45\linewidth}
\begin{center}
\includegraphics[scale=0.35]{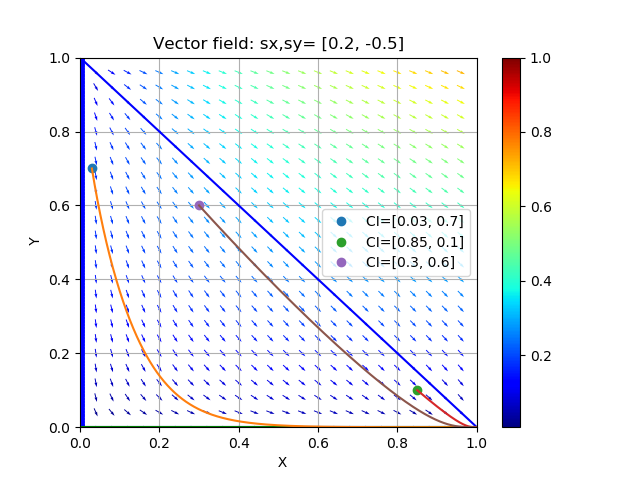}
\end{center}
\end{minipage}
\hfill
\begin{minipage}[c]{.45\linewidth}
\begin{center}
\includegraphics[scale=0.35]{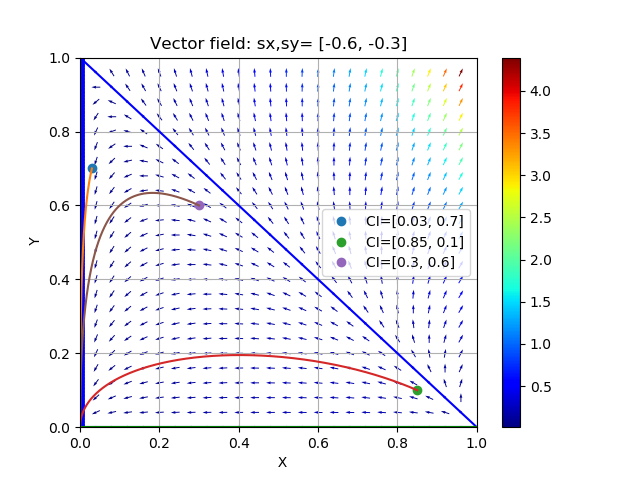}
\end{center}
\end{minipage}
\caption{Vectors fields and trajectories for the previous configuration of fitness  }
\end{figure}
In this situation $\Delta_b$ is a straight line in red on figure \ref{champdroite2} and the $v_b$ correspond to the direction of the green straight line.

\begin{figure}[htbp]
    \centering
    \includegraphics[scale=0.35]{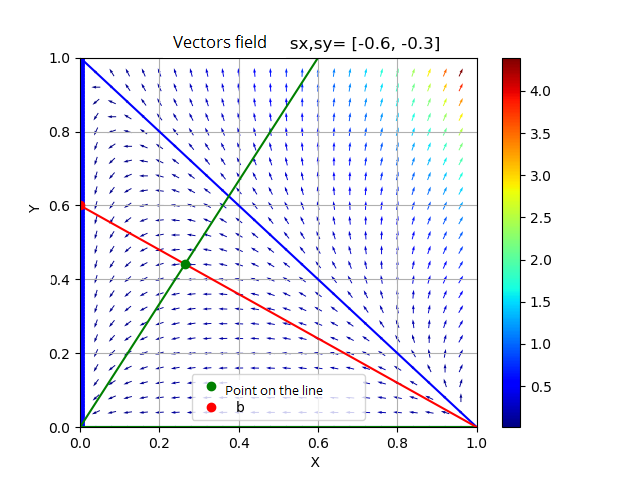}
    \caption{Illustration  of $\Delta_b$ and $v_b$.}
    \label{champdroite2}
\end{figure}
Figure \ref{champdroite2} shows that the field is entering in $E_b$, and this true for each $b$. The conclusion follows, the $Y$ species goes to extinction. The following figure illustrate a trajectory where the species $Y$ has always a negative fitness.

\begin{figure}[htbp]
\begin{center}
\includegraphics[scale=0.35]{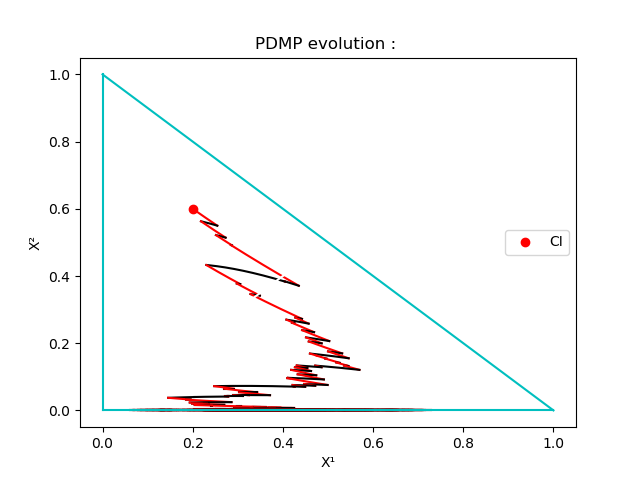}
\end{center}
\caption{A PDMP trajectory where the $2^{nd}$ species goes to extinction. The values of environments are $s_1=[0.4,-0.1]$ and $ s_2=[-0.3,-0.2]$.}
\end{figure}

\end{ex}

\vspace{50mm}

We  now look at the case of three species and two  jumps rates.
Previous theorems  find their application in this particular case. The two dimensional study made in Section \ref{etudedim1} is also necessary to understand  the long time  behavior of the process. The space of extinction $ M_0 $ corresponds here to the side of the triangle defined by the apex $ (0,0), (1,0), (0,1) $. Thus understanding the long time behavior  requires knowledge of the invariant measures of the process on $ M_0 $. In particular the behaviors of the process on each side of the triangle, which corresponds to studying the process with only two species.\\

\section{The case of 3 species }

\subsection{Introduction}
\label{3species}
 For sake of simplicity, when there are only three species, we use the notations $X$, $Y$ and $Z$, instead of $X^1$, $X^2$, $X^3$. We also denote by $s_k$ and $r_k$ the fitness in environment $k\in \{1,...,K\}$ of species $X$ and $Y$, respectively (remember that we have set the fitness of the third species to be $0$). Since we have for all $t \geq 0$, $X_t + Y_t + Z_t= 1$, we are interested in the following equations :
 
\begin{equation}\label{twodimprosess}
\left\{
\begin{aligned}
&\frac{dX_t}{dt}=X_t\frac{s_t-s_t X_t-r_t Y_t}{1+s_t X_t+r_t Y_t}\\
&\frac{dY_t}{dt}=Y_t\frac{r_t-s_t X_t-r_t Y_t}{1+s_t X_t+r_t Y_t}\\
\end{aligned}
\right. 
\end{equation}

The community still evolves in two different environments. The fitness takes values in  $E=\{\mathbf{s}_1=(s_1,r_1,0),..., \mathbf{s}_K=(s_K,r_K,0)\}$.
 To avoid trivial switching, we assume that $\mathbf{s}_i \neq \mathbf{s}_j$ if $i\neq j$. Assume yet that  $(\mathbf{s}_t)_{t>0}$ has a unique invariant probability measure $\mu=\sum\limits_{i=1}^{K}p_i\delta_{\mathbf{s}_i}$. 
The set of ergodic measures of the process on $M_0$, $\mathscr{P}_{erg}(M_0)$,  depends on the values of the environment.
This set always contains $\mu_3 = \delta_{(0,0)} \otimes \mu$,  $ \mu_2=\delta_{(0,1)} \otimes \mu$ and $ \mu_1=\delta_{(1,0)} \otimes \mu$, the Dirac masses on each vertex. But on each side, it may have one other ergodic measure. It corresponds to persistence of the two remaining species on this  side.
Let's give more details about it:

\begin{prop}
\label{prop:ergodic}
Let define 
\begin{align*}
    &\Lambda_{0}^1=\sum\limits_{i=1}^K p_i r_i,\quad \Lambda_{1}^1=\sum\limits_{i=1}^K -p_i\frac{r_i}{1+r_i} \\
    &\Lambda_{0}^2=\sum\limits_{i=1}^K p_is_i \quad \Lambda_{1}^2=\sum\limits_{i=1}^K -p_i\frac{s_i}{1+s^i} \\
    & \Lambda_{0}^3=\sum\limits_{i=1}^K p_i\frac{r_i-s_i}{1+s_i}, \quad \Lambda^3_{1}=\sum\limits_{i=1}^K p_i\frac{s_i-r_i}{1+r_i}\\
\end{align*}

 Then the process admits an  ergodic measure, $\nu_i$ on $\mathrm{int}(\mathscr{E}^i_0) \times E$ if and only if $\Lambda_0^i>0$ and $\Lambda_1^i>0$. Furthermore, this ergodic measure is unique and explicitly computable.
\end{prop}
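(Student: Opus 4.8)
The key structural observation is that each extinction edge $\mathscr{E}_0^i$ is invariant for every vector field $G_{\mathbf{s}}$, so the restriction of the PDMP to $\mathscr{E}_0^i \times E$ is again a \emph{two-species} PDMP of exactly the type analysed in Section \ref{etudedim1}. The plan is therefore to reduce the dynamics on each of the three edges to the canonical one-dimensional form $\dot x = \hat{s}\, x(1-x)/(1+\hat{s}x)$, read off the two associated boundary growth rates, and then invoke Proposition \ref{prop:oneD} (case (3)) together with Lemma \ref{lem:density}. An ergodic measure of the full process charging $\mathrm{int}(\mathscr{E}_0^i)\times E$ is precisely an invariant measure of the restricted process on the open edge, since $X^i\equiv 0$ is preserved.

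First I would treat $\mathscr{E}_0^1 = \{X=0\}$. Setting $X_t \equiv 0$ in \eqref{twodimprosess} leaves $\dot Y_t = r_t\, Y_t(1-Y_t)/(1+r_t Y_t)$, which is the canonical equation with selection $r_i$ and reference species $Z$. Its growth rates are $\sum_i p_i r_i = \Lambda_0^1$ at $Y=0$ and $-\sum_i p_i r_i/(1+r_i) = \Lambda_1^1$ at $Y=1$. The edge $\mathscr{E}_0^2 = \{Y=0\}$ is identical with $r_i$ replaced by $s_i$, producing $\Lambda_0^2$ and $\Lambda_1^2$.

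The only edge requiring a genuine computation is $\mathscr{E}_0^3 = \{Z=0\} = \{X+Y=1\}$, on which neither present species is the normalized reference. Substituting $Y_t = 1-X_t$ into \eqref{twodimprosess} and simplifying gives $\dot X_t = (s_t-r_t)\,X_t(1-X_t)/\big((1+r_t)+(s_t-r_t)X_t\big)$, which is the canonical equation for the renormalized fitness $\hat{s}_i = (s_i-r_i)/(1+r_i)$ (the change of variables that makes $Y$ the reference). Using $1+\hat{s}_i = (1+s_i)/(1+r_i)$, the growth rate at $X=0$ is $\sum_i p_i \hat{s}_i = \Lambda_1^3$, and the growth rate at $X=1$ is $-\sum_i p_i \hat{s}_i/(1+\hat{s}_i) = \sum_i p_i (r_i-s_i)/(1+s_i) = \Lambda_0^3$.

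In all three cases Proposition \ref{prop:oneD} shows that the restricted one-dimensional process admits an invariant probability measure charging the open edge if and only if both associated growth rates are strictly positive, that is iff $\Lambda_0^i > 0$ and $\Lambda_1^i > 0$; when this holds the measure is unique, hence ergodic and equal to $\nu_i$, and Lemma \ref{lem:density} furnishes its explicit density while simultaneously ruling out the boundary cases $\Lambda_0^i = 0$ or $\Lambda_1^i = 0$. I expect the only delicate point to be the reduction on $\mathscr{E}_0^3$: one must check that $\hat{s}_i = (s_i-r_i)/(1+r_i)$ is the correct renormalization and that the resulting growth rates coincide with the stated $\Lambda_0^3, \Lambda_1^3$ rather than some permutation of them (here the roles of the two endpoints are swapped, but this is harmless since the persistence criterion is symmetric in the two rates).
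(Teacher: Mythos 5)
Your proposal is correct and follows exactly the paper's route: the paper proves Proposition \ref{prop:ergodic} in one line as a corollary of Proposition \ref{prop:oneD}, which is precisely your reduction of the dynamics on each invariant edge to the canonical two-species form. Your explicit renormalization $\hat{s}_i=(s_i-r_i)/(1+r_i)$ on $\mathscr{E}_0^3$ and the verification that the resulting growth rates are $\Lambda_1^3$ and $\Lambda_0^3$ simply fill in details the paper leaves implicit, and they check out.
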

\begin{proof}
This property  is a corollary of Proposition \ref{prop:oneD}.
\end{proof}
When $\Lambda_0^i$ and $\Lambda_1^i$ are positive, we denote by $\nu_i$ the unique ergodic measure on $\mathrm{int}(\mathscr{E}^i_0) \times E$.

\begin{remarque}
The signs of $\Lambda_0^i$ and $\Lambda_1^i$ determines the behavior of the process on side $i$ in a neighbourhood of the extinction and the invasion, respectively, for the $i^{th}$ species.
\end{remarque}

\subsection{Study for two environmental states}
\quad\\
Now we assume $K=2$, then $\mu=p_1\delta_{\mathbf{s}_1}+p_2\delta_{\mathbf{s}_2}$ with  $p_1 = \frac{q_{2}}{q_{1}+q_{2}}$ and $p_2 = \frac{q_{1}}{q_{1}+q_{2}}$

We now describe the different possible regimes according to the values of the parameters.
\begin{itemize}
    \item If a species,  for example $Y$, is always disadvantaged with respect to the same other species in each environment, by Corollary \ref{droiteextinctioncorr}, this species goes to extinction. 
    
Reorganising  the order of species if necessary, it corresponds to   \\
$
 \left\{
    \begin{array}{ll}\label{fitdefa}
       s_1>0>r_1 \\
       0>r_2>s_2
    \end{array}
\right.
$
or $
 \left\{
    \begin{array}{ll}
       s_1>0>r_1 \\
       0>s_2>r_2
    \end{array}
\right.
$
or
$
 \left\{
    \begin{array}{ll}
       s_1>0>r_1 \\
       s_2>0>r_2
    \end{array}
\right.
$
or
$
 \left\{
    \begin{array}{ll}
       s_1>0>r_1 \\
       s_2>r_2>0
    \end{array}
\right.
$

Then, there are several possibilities for the two remaining species, depending on the behavior of the process on the axis $\{y=0\}$. If we are in the situation 1 or 2  of the Proposition \ref{prop:oneD}, then a species will invade the community. However, if we are in the situation 3 of Proposition \ref{prop:oneD}, there is persistence of both other species (see Figure \ref{fig:YextXZpers}).

\begin{figure}[htbp]
\label{fig:YextXZpers}
\begin{center}
\includegraphics[scale=0.35]{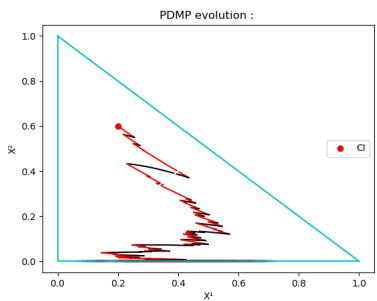}
\end{center}
\caption{A PDMP trajectory where species $Y$ goes to extinction and  both other species persist. The values of environments are $s_1=[0.4,-0.1]$ and $ s_2=[-0.3,-0.2]$.}
\end{figure}

\item Else, reorganising if necessary the order of species, we are in the following situation :\\

\begin{equation}
     \left\{
    \begin{array}{ll}
       s_1>0>r_1 \\
       r_2>0>s_2
    \end{array}
\right.
\label{ordrefitness}
\end{equation}

We see in the following that several behaviors are possible. According to the values of the fitness, each species, even the neutral, can invade the community (see Theorem \ref{thm:neutralinvade}). However, it is not possible to have persistence of all the species.
\end{itemize}

Before describing the possibilities for the case \eqref{ordrefitness}, we compute the invasion rates for each species.

\subsubsection{Computation of the invasion rates}
Referring to Theorem \ref{thm:persistence}, it is important to describe the set of the ergodic measures on $M_0$ and to compute, for each species, the associated invasion rate. We know that there are at least three ergodic probability measures on $M_0$, namely $\mu_1$, $\mu_2$ and $\mu_3$. According to Proposition \ref{prop:ergodic}, there are at most three other ergodic probability measures on $M_0$, denoted by $\nu_1$, $\nu_2$ and $\nu_3$. 
We now compute the invasion rates of species $i$ with respect to each of these ergodic measures. We detail the computations for species 1, they are similar for the other species. 

\begin{lemma}
\label{lem:invasionrates}
We have
\[ \lambda_1(\mu_3) = \sum\limits_{i=1}^K p_i s_i=\Lambda_0^2, \quad \lambda_1(\mu_2) = \sum_{i=1}^K p_i \frac{s_i - r_i}{1+r_i}=\Lambda_1^3, \quad \lambda_1(\mu_1) = 0.
\]
Moreover, when $\nu_1, \nu_2$ and $\nu_3$ exist, we have
\[
\lambda_1(\nu_3) = \lambda_1(\nu_2) = 0\]
and
\[
\lambda_1(\nu_1) = \left( \frac{s_0}{|r_0|} + \frac{s_1}{|r_1|}\right)C_1,
\]
where $C_1$ is an explicitely computable positive constant.
\end{lemma}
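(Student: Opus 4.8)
The plan is to evaluate the integral of Definition \ref{tauxinvasion} for $i=1$ against each of the six ergodic measures, using that species $1$ is the $X$-coordinate so that
\[
F^1_{\mathbf{s}_j}(X,Y) = \frac{s_j - s_j X - r_j Y}{1 + s_j X + r_j Y}.
\]
The three Dirac measures are immediate. Since $\mu_3,\mu_2,\mu_1$ are concentrated on the vertices $(0,0)$, $(0,1)$, $(1,0)$, I would substitute these coordinates and integrate against $\mu$: at $(0,0)$ one gets $F^1_{\mathbf{s}_j}=s_j$, hence $\lambda_1(\mu_3)=\sum_j p_j s_j=\Lambda_0^2$; at $(0,1)$ one gets $F^1_{\mathbf{s}_j}=\frac{s_j-r_j}{1+r_j}$, hence $\lambda_1(\mu_2)=\Lambda_1^3$; and at $(1,0)$ one gets $F^1_{\mathbf{s}_j}=0$, hence $\lambda_1(\mu_1)=0$.

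For $\nu_2$ and $\nu_3$ the key observation is that species $1$ is present on both supporting faces $\mathscr{E}_0^2=\{Y=0\}$ and $\mathscr{E}_0^3=\{Z=0\}$, so its invasion rate must vanish. I would justify this by the general principle that any ergodic invariant measure $\mu$ with $\mu(\{X>0\})=1$ satisfies $\lambda_1(\mu)=0$: combining the identity $\frac1t\log X_t = \frac1t\int_0^t F^1_{\mathbf{s}_u}(U_u)\,du + \frac1t\log X_0$ from Remark \ref{rem:invasionrates} with Birkhoff's theorem yields $\frac1t\log X_t \to \lambda_1(\mu)$; then $X_t\le 1$ forces $\lambda_1(\mu)\le 0$, while the fact that $\nu_i$ charges a compact subset of $\{X\in(0,1)\}$ prevents $X_t$ from decaying exponentially and forces $\lambda_1(\mu)\ge 0$. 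Equivalently, applying the stationarity identity $\int L V\,d\nu_i=0$ to $V=\log X$ gives $\int F^1\,d\nu_i=0$ directly, $\log X$ being integrable since the density of $\nu_i$ behaves like a positive power at the boundary.

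The substantive computation is $\lambda_1(\nu_1)$. On the face $\mathscr{E}_0^1=\{X=0\}$ the system reduces to the two-species model of Section \ref{etudedim1} in the variable $Y$, with the fitness $s_t$ there replaced by $r_t$; thus $\nu_1$ is exactly the persistent measure of Lemma \ref{lem:density} for that subsystem, and its density in environment $j$ is
\[
h_j(Y)=\frac{C_1}{|r_j|}(1-Y)^{\beta_1\Lambda_1^1-1}Y^{\alpha_1\Lambda_0^1-1}(1+r_jY),
\]
with $\alpha_1 = \frac{q_1+q_2}{|r_1 r_2|}$ and $\beta_1 = \alpha_1(1+r_1)(1+r_2)$. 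Since $F^1_{\mathbf{s}_j}(0,Y)=\frac{s_j-r_jY}{1+r_jY}$, the factor $1+r_jY$ cancels against the one in $h_j$, leaving
\[
\lambda_1(\nu_1)=\sum_{j=1}^2 \frac{C_1}{|r_j|}\int_0^1 (s_j-r_jY)(1-Y)^{a-1}Y^{b-1}\,dY,
\]
where $a=\beta_1\Lambda_1^1>0$ and $b=\alpha_1\Lambda_0^1>0$. Each integral is a Beta integral, and using $\int_0^1 Y^{b}(1-Y)^{a-1}\,dY = \frac{b}{a+b}B(b,a)$ I obtain $\lambda_1(\nu_1)=C_1 B(b,a)\sum_j \big(\frac{s_j}{|r_j|}-\frac{b}{a+b}\operatorname{sign}(r_j)\big)$.

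The final, and most delicate, point is the cancellation. As in the proof of Lemma \ref{lem:density} (and Proposition \ref{prop:oneD}), $\nu_1$ exists only when $\Lambda_0^1>0$ and $\Lambda_1^1>0$, which forces $r_1$ and $r_2$ to have opposite signs (one environment favours $Y$, the other disfavours it); hence $\sum_j \operatorname{sign}(r_j)=0$, the $\frac{b}{a+b}$ terms drop out, and
\[
\lambda_1(\nu_1)=C_1 B(b,a)\Big(\frac{s_1}{|r_1|}+\frac{s_2}{|r_2|}\Big),
\]
which is the announced expression once the positive Beta factor is absorbed into $C_1$. The formulas for $\lambda_2$ and $\lambda_3$ follow by permuting the roles of the three species. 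I expect the main obstacle to be precisely this last step: tracking the normalisation constant and the signs of $r_1,r_2$ accurately enough to see that the $\operatorname{sign}(r_j)$ contributions cancel, leaving the clean symmetric formula rather than a genuine combination of two distinct Beta values.
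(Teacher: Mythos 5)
Your proposal is correct and follows essentially the same route as the paper: immediate substitution for the Dirac masses, the general principle that a species present in the support of an ergodic measure has zero invasion rate (which the paper simply cites as \cite[Theorem 5.1 (i)]{benaimpersistence} rather than reproving via Birkhoff or the identity $\int L\log X\,d\nu_i=0$ as you do), and the explicit density of $\nu_1$ from Lemma \ref{lem:density} combined with the fact that existence of $\nu_1$ forces $r_1$ and $r_2$ to have opposite signs. The only real difference is that the paper performs the sign cancellation pointwise inside the integrand, writing
\[
\frac{s_1 - r_1 y}{|r_1|} + \frac{s_2 - r_2 y}{|r_2|} = \frac{s_1}{|r_1|} + \frac{s_2}{|r_2|}
\quad \text{for all } y \in (0,1),
\]
so the constant $C_1=\int_0^1 G(y)/(y(1-y))\,dy$ factors out with no Beta-function evaluation needed; your post-integration cancellation of the $\operatorname{sign}(r_j)\,b/(a+b)$ terms is equivalent but does extra work at what you correctly identified as the delicate step.
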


\begin{proof}

Recall that for all $\mu \in Perg(M_0)$, $\lambda_1(\mu) = \sum\limits_{i=1}^K \int_{M_0} F^i(x,y) d \mu^i(x,y)$, with 
\[
F^i(x,y) = \frac{s_i - (s_i x + r_i y)}{1 + s_i x + r_i y}.
\]
In particular, the formulae for $\lambda_1(\mu_3)$ and $\lambda_1(\mu_2)$ are immediate from the definitions of $\mu_2$ and $\mu_3$.  The fact that $\lambda_1(\mu_1) = \lambda_2(\nu_2) = \lambda_3(\nu_3) = 0$ is straightforward from \cite[Theorem 5.1 (i)]{benaimpersistence}. It remains to compute $\lambda_1(\nu_1)$. First, we note that by Proposition \ref{prop:ergodic}, $\nu_1$ only exists if $p_0 r_0 + p_1 r_1 > 0$ and $ p_0 \frac{r_0}{1+r_0} + p_1 \frac{r_1}{1+r_1} < 0$. In particular, it can only exists if $r_0$ and $r_1$ are of opposite signs. Next, we know by Lemma \ref{lem:density} that in this case, $\nu_1$ admits a density with respect to the Lebesgue measure on $M_0^1$, which is given by 
\[
g_i(y) = \frac{G(y)(1+r_i y)}{y(1-y)|r_i|}. 
\]
for some function $G$ similar to the function $H$ given in Lemma \ref{lem:density}.
Thus, 
\begin{align*}
\lambda_1(\nu_1) & = \int_0^1 F^0(0,y)g_0(y) + F^1(0,y)g_1(y) dy\\
& = \int_0^1 \left( \frac{s_0 - r_0 y}{|r_0|} + \frac{s_1 - r_1 y}{|r_1|} \right) \frac{G(y)}{y(1-y)}dy
\end{align*}
Now, since $r_0$ and $r_1$ have opposite signs, we have for all $y \in (0,1)$ 
\[
\frac{s_0 - r_0 y}{|r_0|} + \frac{s_1 - r_1 y}{|r_1|} = \frac{s_0}{|r_0|} + \frac{s_1}{|r_1|},
\]
which implies that
\[
\lambda_1(\nu_1) = \left( \frac{s_0}{|r_0|} + \frac{s_1}{|r_1|}\right) \int_0^1 \frac{G(y)}{y(1-y)} dy. 
\]
and concludes the proof.
\end{proof}
The invasion rates for species 1, as well as for species 2 and 3 are summed up in Table \ref{tab:invasion}. As for $C_1$, $C_2$ and $C_3$ are positive constant that may be computed. However, since it is sufficient to know the signs of the invasion rates, it does not really matter to have the exact expression of the $C_i$.

\begin{table}
\begin{center}
\begin{tabular}{c|c|c|c||c|c|c}
       & $\mu_1$ & $\mu_2$ & $\mu_3$ & $\nu_1$ & $\nu_2$ & $\nu_3$ \\ \hline
      1 & 0 & $\Lambda_1^3 $ & $\Lambda_0^2$ & $C_1 ( \frac{s_1}{|r_1|} + \frac{s_2}{|r_2|})$ & 0 & 0\\ \hline
      2 & $\Lambda_0^3$ & 0 & $\Lambda_0^1$ & 0 & $C_2 ( \frac{r_1}{|s_1|} + \frac{r_2}{|s_2|})$ & 0\\ \hline
      3 & $\Lambda_1^2$ & $\Lambda_1^1$ & 0 & 0 & 0 &  $C^3 ( -\frac{r_1}{|s_1-r_1|} - \frac{r_2}{|s_2-r_2|})$
    \end{tabular}
    
    \end{center}
    \caption{Invasion rates}
    \label{tab:invasion}
  \end{table}  
\subsection{Environment favours and disadvantages alternately two species, the other one stay neutral over time.}

With no loss of generality, we assume the  environment promotes and disadvantages alternately the first and the second species. So without regard to order, only one configuration is possible:
\begin{equation}
         s_1>0>r_1, \qquad
       r_2>0>s_2.
\end{equation}

In this configuration, many different situations are possible for the long time behavior. In the following we highlight some of them. Mainly, we discuss about the situation where the neutral species (the third in the last configuration) invade the community. Then we prove that persistence of the three species is not possible with only two environments.

\subsubsection{The neutral species invades the community}
In this section, we use results of Section \ref{sec:suffinvade} to provide sufficient conditions for the invasion of the neutral species. We also give an example of parameters satisfying theses conditions.

\begin{theorem}
\label{thm:neutralinvade}
Assume that $\Lambda_0^1<0$ and $\Lambda_0^2<0$. Set 
\[ \Lambda = \max(\Lambda_0^1, \Lambda_0^2) < 0
\]
Then, for all $(x,y) \in \mathscr{E} \setminus \mathscr{E}_0^3$, and all $\mathbf{s} \in E$,
\[
\mathbb{P}_{(x,y,\mathbf{s})} \left( \limsup_{t \to \infty} \frac{1}{t} \log \|(X_t,Y_t)\| \leq \Lambda \right) =1.
\]
\end{theorem}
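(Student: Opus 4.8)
The plan is to identify the invasion of the neutral species $Z$ with convergence of $(X_t,Y_t)$ to the common equilibrium $e_3=(0,0)$ and then to run the same machinery as in Theorem \ref{th:alwaysthebest}, namely \cite[Theorem 3.8]{BS17}. First I would note that $\|(X_t,Y_t)\| = \|X_t-e_3\|$ and that, for $i_0=S+1=3$ (whose fitness is $0$ in every environment), the quantity $\Lambda_{i_0}$ of Theorem \ref{thm:suffinvade} equals $\max\big(\sum_j p_j s_j,\sum_j p_j r_j\big) = \max(\Lambda_0^2,\Lambda_0^1) = \Lambda$. Thus the hypothesis $\Lambda<0$ is exactly the hypothesis of the first part of Theorem \ref{thm:suffinvade}, which yields the local lower bound: for every $\alpha\in(\Lambda,0)$ there are $\eta>0$ and a neighbourhood $\mathscr{U}$ of $e_3$ with $\mathbb{P}_{(x,\mathbf{s})}(\limsup_t \tfrac1t\log\|X_t-e_3\|\le\alpha)\ge\eta$ on $\mathscr{U}$. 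To promote this to probability one from all of $M_3^+ = M\setminus M_0^3$ with the sharp rate $\Lambda$, I must, as in Theorem \ref{th:alwaysthebest}, supply (a) a Lyapunov function near the ``bad'' face $\mathscr{E}_0^3=\{X+Y=1\}$ and (b) accessibility of $e_3$ from $M_3^+$.

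The engine for both steps is a single algebraic inequality. In the present configuration $r_1<0<r_2$ and $s_2<0<s_1$, so $\Lambda_0^1=p_1 r_1+p_2 r_2<0$ reads $p_2 r_2<p_1|r_1|$ and $\Lambda_0^2=p_1 s_1+p_2 s_2<0$ reads $p_1 s_1<p_2|s_2|$; multiplying these gives
\[
r_2 s_1 < |r_1|\,|s_2|,
\]
an inequality I will use twice. For the Lyapunov function, I would establish $H$-persistence with respect to $\mathscr{E}_0^3$ via Theorem \ref{thm:persistence}(1), i.e. check $\lambda_3(\mu)>0$ for every $\mu\in\mathscr{P}_{erg}(M_0^3)$. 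The ergodic measures on the side $\{X+Y=1\}$ are the two vertex masses $\mu_1,\mu_2$ and, when it exists, the coexistence measure $\nu_3$. By Lemma \ref{lem:invasionrates} (Table \ref{tab:invasion}) one has $\lambda_3(\mu_1)=\Lambda_1^2$ and $\lambda_3(\mu_2)=\Lambda_1^1$, both positive because $\Lambda_0<0\Rightarrow\Lambda_1>0$ on each edge, exactly as shown in the proof of Proposition \ref{prop:oneD}. When $\nu_3$ is present, Lemma \ref{lem:invasionrates} gives $\lambda_3(\nu_3)=C^3\big(-\tfrac{r_1}{|s_1-r_1|}-\tfrac{r_2}{|s_2-r_2|}\big)$ with $C^3>0$; since $r_1<0<r_2$, its sign is that of $\tfrac{|r_1|}{s_1+|r_1|}-\tfrac{r_2}{|s_2|+r_2}$, and cross-multiplying shows this is positive exactly when $r_2 s_1<|r_1||s_2|$, which is the inequality above. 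Hence $\lambda_3>0$ on all of $\mathscr{P}_{erg}(M_0^3)$, the process is $H$-persistent with respect to $\mathscr{E}_0^3$, and \cite[Proposition 8.2]{benaimpersistence} furnishes a Lyapunov function, so Hypothesis 3.7 of \cite{BS17} holds on $M_3^+$.

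The main obstacle is accessibility of $e_3$, which is genuinely harder than in Theorem \ref{th:alwaysthebest}: there $e_1$ was globally attracting for every single vector field, whereas here each fixed environment drives the system toward $\mathscr{E}_0^3$ (in environment $1$ species $X$ wins, in environment $2$ species $Y$ wins), so $e_3$ can only be reached through switching. I would control $Z=1-X-Y$ using the identity, valid along a fixed-environment flow with fitnesses $s,r$ for $X,Y$,
\[
\frac{dZ}{dt} = -\frac{(sX+rY)\,Z}{1+sX+rY},
\]
so $Z$ increases precisely where $sX+rY<0$. The inequality $r_2 s_1<|r_1||s_2|$ guarantees that for every $(X,Y)$ with $X+Y>0$ at least one of $s_1X+r_1Y$ and $s_2X+r_2Y$ is strictly negative (otherwise one would obtain $|r_1|/s_1\le X/Y\le r_2/|s_2|$, forcing $|r_1||s_2|\le r_2 s_1$). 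A feedback control selecting, at each state, the environment making $dZ/dt$ positive therefore drives $Z$ strictly upward, and a compactness argument on the segments $\{X+Y=\mathrm{const}\}$ (where $\min_u(s_u X+r_u Y)$ is bounded away from $0$, being negative at the endpoints $s_2 X<0$ and $r_1 Y<0$) shows $dZ/dt$ stays bounded below until $Z$ is near $1$; hence the controlled trajectory enters $\mathscr{U}$, giving accessibility of $e_3$ from the interior. On the two edges $\mathscr{E}_0^1,\mathscr{E}_0^2$ accessibility is immediate, since the one-dimensional dynamics there converge to $e_3$ by Proposition \ref{prop:oneD}(1). With the local lower bound, the Lyapunov function, and accessibility in hand, \cite[Theorem 3.8]{BS17} upgrades the conclusion to probability one with rate $\le\Lambda$, exactly as in the proof of Theorem \ref{th:alwaysthebest}.
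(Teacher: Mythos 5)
Your proof is correct, and its skeleton coincides with the paper's: both establish (i) the local positive-probability estimate near $e_3$ via Theorem \ref{thm:suffinvade} (after identifying $\Lambda_3=\max(\Lambda_0^2,\Lambda_0^1)=\Lambda$), (ii) repulsivity of the face $\mathscr{E}_0^3$ by checking $\lambda_3(\mu_1)=\Lambda_1^2>0$, $\lambda_3(\mu_2)=\Lambda_1^1>0$ and $\lambda_3(\nu_3)>0$, so that Theorem \ref{thm:persistence} and \cite[Proposition 8.2]{benaimpersistence} furnish the Lyapunov function required by Hypothesis 3.7 of \cite{BS17}, and (iii) accessibility of $e_3$, before concluding with \cite[Theorem 3.8]{BS17}. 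The genuine difference is in step (iii). The paper passes through Lemma \ref{lem:accessconjug} to replace the fields $G_{\mathbf{s}}$ by the polynomial fields $g_{\mathbf{s}}$, observes that the convex combination $p_1 g_{\mathbf{s}_1}+p_2 g_{\mathbf{s}_2}=g_{\mathbf{s}_p}$ with $\mathbf{s}_p=(\Lambda_0^2,\Lambda_0^1,0)$ belongs to the same family and makes the neutral species strictly fittest, and then invokes Theorem \ref{th:deterministe}: the averaged flow converges to $0$ from every point of $\mathscr{E}_+^3$, edges included, giving accessibility in one stroke. You instead run an explicit feedback control on $Z=1-X-Y$, using the identity $dZ/dt=-(sX+rY)Z/(1+sX+rY)$ and the inequality $r_2s_1<|r_1|\,|s_2|$ — which you derive by multiplying the two hypotheses for general $p_1,p_2$, incidentally cleaner than the paper's write-up of the corresponding step, which reads as if $p_1=p_2$ — to show that at every state one of the two environments makes $Z$ strictly increase, at a rate bounded below on compact sets. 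Your route is more elementary and self-contained (no conjugation lemma, no averaging) and exposes the invasion mechanism; its costs are that the edges $\mathscr{E}_0^1,\mathscr{E}_0^2$ must be treated separately via Proposition \ref{prop:oneD}, and that one detail should be made explicit: accessibility is defined through composite flows of the given fields, so the state-dependent feedback must be realized by finitely many constant-environment pieces. This follows from a standard compactness argument on the open cover $\{s_1X+r_1Y<0\}\cup\{s_2X+r_2Y<0\}$ of the region $\{X+Y\geq\varepsilon\}$: each admissible field can be followed for a uniformly positive time while $Z$ increases at a uniformly positive rate, so finitely many switches suffice. Finally, your inline verification of the sign of $\lambda_3(\nu_3)$ simply re-derives Proposition \ref{prop : impossiblepersis}, which the paper cites at the same point; the content is identical.
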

\begin{proof}
With the notations of Theorem \ref{thm:suffinvade},  $\Lambda = \Lambda_3$, and thus, by this theorem,
for all $\alpha \in (\Lambda, 0)$, there exists $\eta > 0$ and a neighbourhood $\mathscr{U}$ of $0$ such that, for all $(x,y) \in \mathscr{U}$ and all $\mathbf{s} \in E$,
\[
\mathbb{P}_{(x,y,\mathbf{s})} \left( \limsup_{t \to \infty} \frac{1}{t} \log \|(X_t,Y_t)\| \leq \alpha \right) \geq \eta.
\]
Now, we prove that the point $0$ is accessible from $\mathscr{E}_+^3$. By Lemma \ref{lem:accessconjug}, $0$ is accessible if and only if it is accessible for the vector fields given by $g_s(x,y) =( x(s^1 - (s^1 x + r^1y)), y(r^1 - (x^1x + r^1y)))$. Consider the convex combination $g_p = p_1 g_{s_1} + p_2 g_{s_2}$. Then $g_p = g_{\mathbf{s}_p}$, where $\mathbf{s}_p = p_1 \mathbf{s}_1 + p_2 \mathbf{s}_2$. Now, since $\Lambda_0^1$ and $ \Lambda_0^2$ are negative,  $\mathbf{s}_p = (s_p, r_p, 0)$. Hence, by Theorem \ref{th:deterministe} and Lemma \ref{lem:accessconjug}, the flow generated by $g_p$ converges to $0$ for all initial condition in $\mathscr{E}^3_+$. In particular, $0$ is accessible from $\mathscr{E}^3_+$ with the vector fields $g_s$, hence it is accessible for $\big((X_t, Y_t)\big)_{t>0}$. 

Finally, as for the proof of Theorem \ref{th:alwaysthebest}, we have to show that the face $\mathscr{E}_0^3$ is repulsive. We use Theorem \ref{thm:persistence}. We know that $\mathscr{P}_{erg}(\mathscr{E}_0^3)$ contain $\mu_1$, $\mu_2$ and possibly $\nu_3$. Furthermore, by Lemma \ref{lem:invasionrates}, Proposition \ref{prop:oneD} and the assumption that $\Lambda_0^1<0$ and $\Lambda_0^2<0$, we have that
\[
\lambda_3(\mu_1) = \Lambda_1^2 > 0 \quad \mbox{and} \quad \lambda_3(\mu_2) = \Lambda_1^1 > 0.
\]
Moreover, since we are in the situation 
\[
 \left\{
    \begin{array}{ll}
       s_1>0>r_1 \\
       r_2>0>s_2
    \end{array},
\right.
\]
the assumption $ \Lambda_0^1 = r_1+r_2<0 $ 
  and $    \Lambda_0^2 = s_1+s_2<0$
is equivalent to   $-r_1>r_2>0$ and $ -s_2>s_1>0$, which implies $r_1s_2>r_2s_1$. Hence, by Proposition \ref{prop : impossiblepersis} below, $\lambda_3(\nu_3) > 0$ which concludes the proof.
 
\end{proof}

\begin{ex}
Take $s_1 = 1/3$, $r_1 = - 1/3$, $s_2=-3/8$ and $r_2 = 1/4$. Choose $q_1 = q_2$ so that $p_1 = p_2 = \frac{1}{2}$. Then
\[
\Lambda_0^1 = - \frac{1}{24}; \quad \Lambda_0^2 = - \frac{1}{48}; \quad \Lambda_0^3 = - \frac{1}{5},
\]
thus conclusion of Theorem \ref{thm:neutralinvade} holds. Illustrations are given in Figure \ref{fig:neutraltriangle}.

\begin{figure}[htbp]
\begin{minipage}[c]{.45\linewidth}
\begin{center}
\includegraphics[scale=0.35]{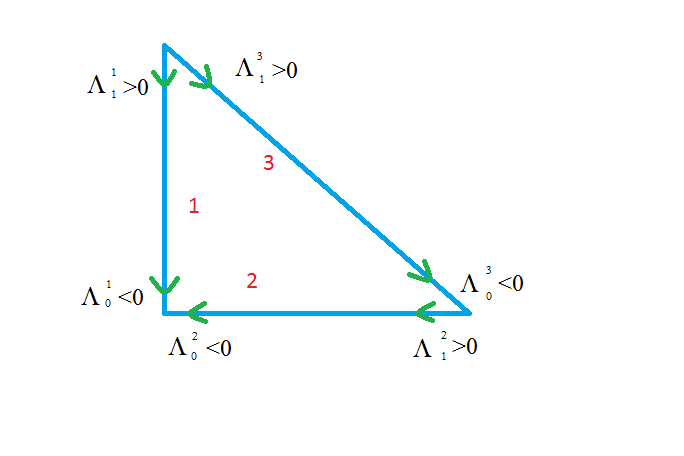}
\caption{behavior of the process on a neighbourhood of each vertex.}
\label{fig:neutraltriangle}
\end{center}
\end{minipage}
\hfill
\begin{minipage}[c]{.45\linewidth}
\begin{center}
\includegraphics[scale=0.35]{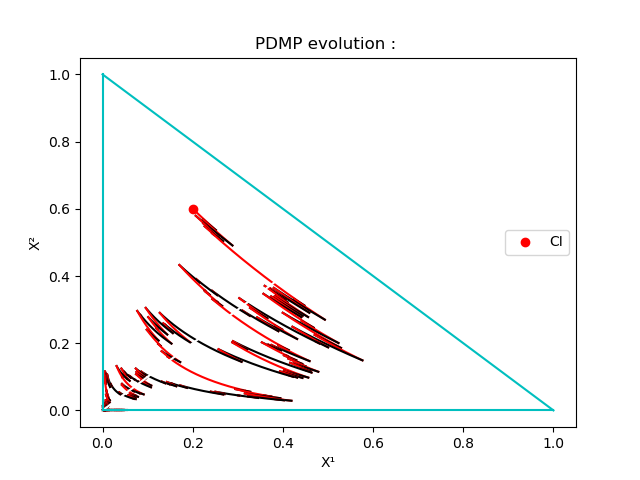}
\caption{Neutral species invade the community. Environment values $[0.5,-0.7]$ and $[-0.8,0.2]$.}
\label{fig:neutral}
\end{center}
\end{minipage}
\end{figure}

\end{ex}
An immediate corollary of Theorem \ref{thm:suffinvade} is the following, which gives a sufficient condition for non invasion of species 3. 
\begin{cor}
Assume that $\Lambda_0^1>0$ or $\Lambda_0^2>0$. For $\varepsilon>0$, define
\[\tau^{\varepsilon} = \inf\{ t \geq 0 \: \|(X_t,Y_t) \| \geq \varepsilon\}.
\]
Then, there exist $\varepsilon>0$, $b >1$, $\theta >0$ and $c>0$ such that, for all $(x,y) \in \mathscr{E} \setminus \mathscr{E}_0$, and all $\mathbf{s} \in E$,
\[
\mathbb{E}_{(x,y,\mathbf{s})} ( e^{ \tau^{\varepsilon}} ) \leq c ( 1 + \|(x,y\|^{-\theta}).
\]
In particular, species $3$ cannot exclude the two other species.
\end{cor}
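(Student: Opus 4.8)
The plan is to recognise the corollary as the vertex-repulsivity half of Theorem \ref{thm:suffinvade} applied to the neutral species, and to translate the hypothesis into the sign condition that this half requires. First I would record the dictionary between the two statements. Since the third species is neutral, $s^3_j = 0$ for every environment, so the invasion threshold of Theorem \ref{thm:suffinvade} at the vertex $e_3$ is
\[
\Lambda_3 = \max_{i \neq 3} \sum_{j=1}^K p_j \frac{s_j^i - s_j^3}{1 + s_j^3} = \max\Big( \sum_{j=1}^K p_j s_j , \sum_{j=1}^K p_j r_j \Big) = \max(\Lambda_0^2, \Lambda_0^1),
\]
exactly as already used in Theorem \ref{thm:neutralinvade}. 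Hence the assumption ``$\Lambda_0^1 > 0$ or $\Lambda_0^2 > 0$'' is precisely $\Lambda_3 > 0$. I would also note that, because $e_3 = 0$, the stopping time $\tau^\varepsilon$ of the corollary coincides with the time $\tau_3^\varepsilon = \inf\{ t \geq 0 : \|X_t - e_3\| \geq \varepsilon\}$ of Theorem \ref{thm:suffinvade}, and that $\|x - e_3\| = \|(x,y)\|$, so the two inequalities are the same estimate written in the three-species coordinates.

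With this dictionary in place, I would invoke the second assertion of Theorem \ref{thm:suffinvade} for $i = 3$. The point to stress is that the exponential-moment bound proved there for a species $i$ rests only on the positivity of $\Lambda_i$ together with the local structure of the vector fields at $e_i$ --- here $DG_s(e_3) = \mathrm{diag}(s^1, s^2)$, the fitnesses of $X$ and $Y$ in the current environment --- and is obtained through the repulsivity estimates of \cite{S18} (Proposition 2.5 and Theorem 2.8) and \cite{BS17} (Theorem 3.5), completed by the localisation argument already used in that proof. Feeding $\Lambda_3 > 0$ into exactly this machinery yields $b > 1$, $\varepsilon > 0$, $\theta > 0$ and $c > 0$ with $\mathbb{E}_{(x,y,\mathbf{s})}(e^{b \tau^\varepsilon}) \leq c(1 + \|(x,y)\|^{-\theta})$ for every $(x,y) \in \mathscr{E} \setminus \mathscr{E}_0$ and $\mathbf{s} \in E$. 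Since $\tau^\varepsilon \geq 0$ and $b > 1$, we have $e^{\tau^\varepsilon} \leq e^{b\tau^\varepsilon}$, which gives the bound stated in the corollary. The non-exclusion of species $1$ and $2$ would then follow from the finiteness of this exponential moment: by the strong Markov property the process leaves the $\varepsilon$-neighbourhood of $e_3$ in finite time from every interior point, and repeating the argument shows it does so infinitely often, so $\limsup_t \|(X_t,Y_t)\| \geq \varepsilon > 0$ almost surely and $(X_t,Y_t)$ cannot converge to $0$.

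The step I expect to be delicate is the very first reduction, namely justifying that I may invoke Theorem \ref{thm:suffinvade} at all. As literally stated, that theorem assumes $\Lambda_{i_0} < 0$ for some species $i_0$ and then produces the escape bound for the \emph{other} species; under the present hypothesis $\Lambda_3 > 0$ there need not be any invading species (we may well sit in a regime of two-species coexistence on a face, where no $\Lambda_{i_0}$ is negative). The honest fix is to observe that the positivity $\Lambda_3 > 0$ alone drives the repulsivity estimate at $e_3$, independently of the behaviour of the other species, so I would extract and reuse only that portion of the proof of Theorem \ref{thm:suffinvade} rather than quote its full hypothesis. Once this is acknowledged, the remaining steps --- the sign bookkeeping, the coordinate translation, dropping $b$ to $1$, and the excursion argument for non-exclusion --- are routine.
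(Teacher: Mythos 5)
Your proof is correct and is essentially the paper's own argument: the paper offers no written proof, presenting the corollary as an immediate consequence of Theorem \ref{thm:suffinvade} applied at the vertex $e_3=0$, which is exactly your reduction ($\Lambda_3=\max(\Lambda_0^1,\Lambda_0^2)$, $\tau^\varepsilon=\tau_3^\varepsilon$, and $e^{\tau^\varepsilon}\leq e^{b\tau^\varepsilon}$). Your closing caveat is also well placed: since the corollary's hypothesis does not supply any $i_0$ with $\Lambda_{i_0}<0$, the theorem cannot be quoted literally, and one must, as you say, reuse the portion of its proof that rests only on $\Lambda_3>0$ and the sign pattern of the Lyapunov exponents at $e_3$ (via Proposition 2.5 and Theorem 2.8 of \cite{S18}, Theorem 3.5 of \cite{BS17}, and the localisation argument) --- a gap the paper passes over silently in calling the corollary ``immediate''.
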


\subsubsection{Persistence is impossible for three species and two environments}

We show in this section that persistence is not possible with only two environments. The following section will give an example of persistence for 3 environments.

The following lemma, which is a consequence of results in \cite{benaimpersistence}, \cite{BL16} and \cite{BS17},  ensures us that if an edge has an attractive index, then the process converges to this face, thus preventing persistence of the process.

\begin{lemma}
\label{lemma : extinct}
Assume that for some species $i$, $\nu_i$ exists and that $\lambda_i(\nu_i) < 0$. Then, for all $(x,y,s) \in M \setminus M_0^i$, 
\[
\pp_{\big((x,y),s\big)} \left( \limsup_{t \to \infty} \frac{1}{t} \log ( X_t^i ) \leq \lambda_i(\nu_i) \right) = 1.
\]
In particular, species $i$ goes to extinction and persistence is not possible in that case.
\end{lemma}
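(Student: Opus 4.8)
The plan is to treat $\nu_i$ as an \emph{attracting} ergodic measure sitting on the face $\mathscr{E}_0^i$, and to mirror the strategy used in the proof of Theorem \ref{th:alwaysthebest}, with the roles of attraction and repulsion reversed. First I would record what is known on the face itself. Since $\nu_i$ exists, Proposition \ref{prop:ergodic} gives $\Lambda_0^i > 0$ and $\Lambda_1^i > 0$, so on $\mathrm{int}(\mathscr{E}_0^i) \times E$ the process is exactly the two-species process of Section \ref{etudedim1} in regime (3) of Proposition \ref{prop:oneD}. Consequently $\nu_i$ is the unique invariant probability measure charging $\mathrm{int}(\mathscr{E}_0^i) \times E$, it is ergodic, and the restricted process converges to it in total variation exponentially fast; in particular $\mathrm{supp}(\nu_i)$ is accessible from $\mathscr{E}_0^i$.

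Next I would reduce the statement to a control of a Birkhoff average. As in Remark \ref{rem:invasionrates}, for $X_0^i \neq 0$ one has
\[
\frac{1}{t} \log X_t^i = \frac{1}{t} \log X_0^i + \frac{1}{t} \int_0^t F^i_{s_u}(X_u)\, du,
\]
so that $\limsup_{t \to \infty} \frac{1}{t} \log X_t^i \leq \lambda_i(\nu_i)$ follows once one knows that the empirical occupation measure of $(X_u, s_u)$ concentrates on $\nu_i$, since $\int F^i\, d\nu_i = \lambda_i(\nu_i)$ is precisely the definition of the invasion rate. The transverse Lyapunov exponent of $\nu_i$ being $\lambda_i(\nu_i) < 0$ means that $\nu_i$ is linearly stable in the direction of species $i$.

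The core of the proof is then the convergence-to-attractor step. Because $\lambda_i(\nu_i) < 0$ and $\mathrm{supp}(\nu_i)$ is accessible, $\nu_i$ is an attracting measure for the PDMP, and the local theory of \cite[Th. 3.1]{BS17} (see also \cite{BL16} and \cite[Th. 5.1]{benaimpersistence}) provides a neighbourhood $\mathscr{V}$ of $\mathrm{supp}(\nu_i)$ and $\eta > 0$ such that, starting from $\mathscr{V}$, with probability at least $\eta$ the trajectory converges to the face $\mathscr{E}_0^i$, i.e. $X_t^i \to 0$. To upgrade this to probability one from every $(x,y,s) \in M \setminus M_0^i$, I would use accessibility of $\mathscr{V}$ from the whole of $M \setminus M_0^i$ together with the Markov property: the process returns to $\mathscr{V}$ infinitely often until the convergence event is realised, and a Borel--Cantelli / zero--one argument then forces this event to have full probability. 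Finally, once the trajectory is trapped near the face, the unique ergodicity of $\nu_i$ on $\mathrm{int}(\mathscr{E}_0^i) \times E$ yields, via Birkhoff's theorem, that $\frac{1}{t}\int_0^t F^i_{s_u}(X_u)\, du \to \lambda_i(\nu_i)$, which through the identity above gives the sharp exponent.

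The main obstacle is exactly this passage from a local, positive-probability estimate to an almost-sure statement holding from every initial condition with the sharp rate $\lambda_i(\nu_i)$. It requires, on the one hand, accessibility of $\mathrm{supp}(\nu_i)$ from the entire interior, which here rests on the two-species analysis of Section \ref{etudedim1} and a flow-averaging argument as in Theorem \ref{thm:neutralinvade}; and on the other hand, control of the dynamics near the \emph{vertices} bounding $\mathscr{E}_0^i$, where the vector fields degenerate, so that the process cannot linger there on its way to $\mathrm{supp}(\nu_i)$. As in the proof of Theorem \ref{thm:suffinvade}, a localisation argument of the type of \cite{HS17} should handle this non-compactness; the attracting (stable) sign of $\lambda_i(\nu_i)$ makes the trapping argument genuinely easier than in the repelling case.
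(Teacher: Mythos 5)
The paper gives no written proof of this lemma: it is simply attributed to results of \cite{benaimpersistence}, \cite{BL16} and \cite{BS17}, and your sketch follows the route that these citations suggest. Nevertheless, your argument has a genuine gap at its central step. You invoke \cite[Th.~3.1]{BS17} on ``a neighbourhood $\mathscr{V}$ of $\mathrm{supp}(\nu_i)$'' as if the hypothesis $\lambda_i(\nu_i)<0$ made this support a locally attracting set. It does not. As follows from the discussion in Lemma~\ref{lem:density}, $\mathrm{supp}(\nu_i)$ is the \emph{whole} face $\mathscr{E}_0^i\times E$, including its two vertices, and this face carries three ergodic measures: $\nu_i$ and the two vertex measures $\mu_j,\mu_k$ ($j,k\neq i$). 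Local theorems of the type of \cite[Th.~3.1]{BS17} (or their face versions in \cite{S18}) require the transverse invasion rate to be negative for \emph{all} ergodic measures supported by the compact invariant set, whereas the lemma's hypothesis controls only $\lambda_i(\nu_i)$. The vertex rates need not be negative: in one of the very situations where the paper applies this lemma (two invariant measures, on faces $1$ and $2$), one has $\lambda_3(\mu_1)=\Lambda_1^2>0$ (see Table~\ref{tab:invasion}), so near the vertex $e_1$ species $3$ is pushed \emph{away} from the face, and no uniform $\eta>0$ on a neighbourhood of the full support can follow from a purely local statement. For the same reason your final step is circular: that the empirical occupation measure of a trajectory remaining near the face concentrates on $\nu_i$, rather than on a convex combination charging $\mu_j$ or $\mu_k$, is exactly what has to be proved. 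The substance of the cited results is precisely this point: the existence of $\nu_i$ forces $\Lambda_0^i>0$ and $\Lambda_1^i>0$ (Proposition~\ref{prop:ergodic}), i.e.\ persistence of the within-face dynamics, and it is this persistence---not the negativity of $\lambda_i(\nu_i)$ by itself---that prevents trajectories close to the face from lingering near the vertices, so that the transverse growth is governed by $\lambda_i(\nu_i)$ alone.

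A second, lesser gap concerns the upgrade from positive probability to probability one with the sharp rate. The Borel--Cantelli/Markov argument needs a lower bound, uniform along the successive excursion times, on the probability of entering $\mathscr{V}$ within a bounded time; since $M\setminus M_0^i$ is not compact, such uniformity is exactly what may fail when the process drifts towards the other faces or towards the opposite vertex $e_i$. One must therefore rule out competing boundary attractors (for instance, $e_i$ attracting is in fact incompatible with $\lambda_i(\nu_i)<0$, but this requires the computations behind Proposition~\ref{prop : impossiblepersis} and the argument of Theorem~\ref{thm:neutralinvade}, not merely accessibility) and control the behaviour near the whole of $M_0$, e.g.\ by the Lyapunov-function/$H$-persistence mechanism used in the proof of Theorem~\ref{th:alwaysthebest}; you acknowledge only the vertices bounding $\mathscr{E}_0^i$, which is not where the difficulty ends. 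In short, your skeleton (local convergence, accessibility, Markov upgrade, Birkhoff identity for the rate) is the intended one, but the key local step is unjustified as written, and its premise of local attractivity of the face is false in the paper's own use-cases.
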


The strategy to prove the lack of persistence is the following. First, we know by Theorem \ref{thm:suffinvade} that when one vertex is attractive (i.e satisfies $\Lambda_i < 0$), the corresponding species has a positive probability to invade the community, hence preventing  the persistence of the process. Thus, we will assume that all the vertex are repulsive. Then, we show that under this assumption, there is always, in the configuration \eqref{ordrefitness}, at least one non-trivial ergodic measure (supported by an edge). Finally, we prove that among all the ergodic measure   $\nu_1$, $\nu_2$, $\nu_3$, there is a species with a negative invasion rate with respect to this measure, which shows that this species has a positive probability to disappear and thus that persistence is not possible. 

\begin{prop}
Assume that all the vertex of the triangle are repulsive, i.e for each $i\in \{1,2,3\}$, $\Lambda_i > 0.$ Then  necessarily  there exists an invariant measure on a side of a triangle.
\end{prop}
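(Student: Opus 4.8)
The plan is to argue by contradiction. Suppose that every vertex of the triangle is repulsive, i.e. $\Lambda_1,\Lambda_2,\Lambda_3>0$, but that \emph{no} side carries an invariant measure, i.e. none of $\nu_1,\nu_2,\nu_3$ exists. By Proposition \ref{prop:ergodic} the latter means that for each $i$ one has $\Lambda_0^i\le 0$ or $\Lambda_1^i\le 0$; since the restriction of the process to side $i$ is exactly the two-species system of Proposition \ref{prop:oneD}, whose dichotomy forbids $\Lambda_0^i$ and $\Lambda_1^i$ from both being negative, on the generic part of parameter space this forces exactly one of the two species present on side $i$ to exclude the other. I orient each side towards the winning vertex. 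Recalling from Theorem \ref{thm:suffinvade} that $\Lambda_{i_0}$ is the maximum of the growth rates of the two foreign species at $e_{i_0}$, and that these two numbers are precisely the ones attached to the two sides meeting at $e_{i_0}$, the repulsiveness $\Lambda_{i_0}>0$ is equivalent to saying that at least one side incident to $e_{i_0}$ is oriented away from $e_{i_0}$.

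Next comes a short counting argument on the oriented triangle. A decided side is oriented away from exactly one of its two endpoints (being oriented away from both would mean mutual invasion, hence coexistence, which is excluded). There are therefore exactly three ``away'' incidences to be shared among the three vertices; as each vertex must receive at least one, each receives exactly one. An orientation of a triangle in which every vertex has out-degree one is a directed $3$-cycle, that is, cyclic (rock-paper-scissors) dominance, either $X\to Y\to Z\to X$ or its reverse.

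The core of the proof is to rule out such a cycle under the configuration \eqref{ordrefitness}. Writing $\rho=p_1/p_2$, each domination relation becomes a threshold condition on $\rho$: the cycle $X\to Y\to Z\to X$ is equivalent to $\Lambda_0^3<0$, $\Lambda_1^1<0$ and $\Lambda_0^2<0$, i.e. to $\rho>A$, $\rho<B$, $\rho<C$ with
\[
A=\frac{(r_2-s_2)(1+s_1)}{(s_1-r_1)(1+s_2)},\qquad B=\frac{r_2(1+r_1)}{-r_1(1+r_2)},\qquad C=\frac{-s_2}{s_1},
\]
so that the cycle exists only if $A<B$ and $A<C$. I would then show this is impossible: cross-multiplying $A<C$ and using $1+s_1>1$ together with $1+s_2<1$ yields the intermediate inequality $s_1 r_2<s_2 r_1$ (the same kind of relation already met in the proof of Theorem \ref{thm:neutralinvade}); substituting it into the cross-multiplied form of $A\ge B$, and bounding the remaining factors by $(1+s_1)(1+r_2)\ge 1$ and $(1+r_1)(1+s_2)\le 1$, one finds $A>B$, a contradiction. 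The reverse cycle is excluded by the entirely analogous computation obtained by interchanging the two environments. Since neither directed cycle can occur, the assumption was untenable and some $\nu_i$ must exist, which proves the proposition.

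I expect the quantitative third paragraph to be the real obstacle: the dichotomy and the pigeonhole reduction are soft, whereas excluding cyclic dominance rests on the explicit sign bookkeeping above. One must also dispose of the non-generic cases where some $\Lambda_0^i$ or $\Lambda_1^i$ vanishes and the corresponding side carries no measure; these are handled either by observing that the hypothesis that all vertices are repulsive then already fails, or by running the same estimates in their non-strict form, the strict positivity needed for the orientation being supplied by the repulsiveness of the vertices.
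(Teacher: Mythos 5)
Your proof is correct and takes essentially the same route as the paper's: argue by contradiction, use the two-species dichotomy of Proposition \ref{prop:oneD} together with repulsiveness of the vertices to reduce to a cyclic (rock--paper--scissors) orientation of the edges, and then exclude both directed cycles by sign bookkeeping whose pivot is the comparison of $s_1 r_2$ with $s_2 r_1$ --- exactly the contradictory pair of inequalities the paper derives by multiplying its two systems. The differences are only presentational (your explicit pigeonhole count and the threshold parametrization by $\rho = p_1/p_2$ versus the paper's figure-based reduction and direct cross-multiplication; your handling of the degenerate cases where some $\Lambda_0^i$ or $\Lambda_1^i$ vanishes is in fact more careful than the paper's, which treats only strict inequalities).
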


\begin{proof}

We prove this result by contradiction. 
Refers to proposition \ref{prop:oneD}, if on the edge $i$, $\Lambda_0^i$ and $\Lambda_1^i$ are positive, there exists an invariant measure on this edge.
Thus the only configuration possible to obtain no invariant measure on the edges, when the vertex are repulsive is the one given in Figure \ref{fig:neutraltriangle} (or the symmetrical case).
\begin{figure}[htbp]
\includegraphics[scale=0.35]{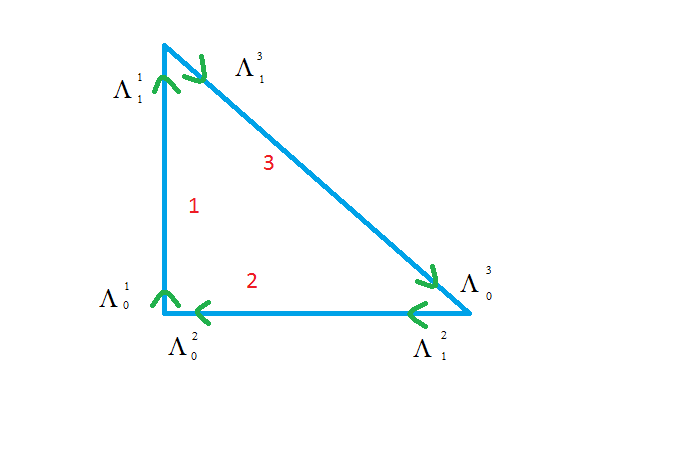}
\caption{Impossible situation.}
\label{fig:neutraltriangle}
\end{figure}
On the one hand, one has, with the configuration of figure \ref{fig:neutraltriangle},  the following inequalities :

   \[ 
\left\{
\begin{aligned}
 \Lambda_{1}^1  & = & -p_1\frac{r_1}{1+r_1}-p_2\frac{r_2}{1+r_2}<0 \\
      \Lambda^3_{1}   & = & p_1\frac{s_1-r_1}{1+r_1}+p_2\frac{s_2-r_2}{1+r_2}>0
\end{aligned}
\right.
\iff
\left\{
\begin{aligned}
  & -p_1\frac{r_1}{1+r_1}<p_2\frac{r_2}{1+r_2} \\
  & p_1\frac{s_1}{1+r_1}>-p_2\frac{s_2}{1+r_2}
\end{aligned}
\right.
\]
and  thus

$$ r_1s_2<r_2s_1. $$
But on the other hand, we have:

   \[ 
\left\{
\begin{aligned}
 \Lambda_{1}^2  & = & -p_1\frac{s_1}{1+s_1}-p_2\frac{s_2}{1+s_2}>0 \\
      \Lambda^3_{0}   & = & p_1\frac{r_1-s_1}{1+s_1}+p_2\frac{r_2-s_2}{1+s_2}<0
\end{aligned}
\right.
\iff
\left\{
\begin{aligned}
  & -p_1\frac{r_1}{1+s_1}>p_2\frac{r_2}{1+s_2} \\
  & p_1\frac{s_1}{1+s_1}<-p_2\frac{s_2}{1+s_2}
\end{aligned}
\right.
\]
which imply

$$ r_1s_2>r_2s_1, $$
a contradiction.

\end{proof}

The previous proposition states that there exists at least one ergodic measure on a edge of the triangle. The following proposition deals with the case where there is only one such measure :

\begin{prop}
Assume it exists exactly one invariant measure on a edge of a triangle, then a species has an negative invasion rate with respect to this measure. And this species goes to extinction.
\end{prop}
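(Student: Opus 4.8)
The plan is to reduce the statement to a sign condition on a single quantity and then to read off that sign from the hypotheses by a short case analysis. Write $\nu_i$ for the unique edge measure, supported on $\mathrm{int}(\mathscr{E}_0^i)\times E$. The two species present on that edge have zero invasion rate with respect to $\nu_i$ (this is \cite[Theorem 5.1(i)]{benaimpersistence}, reflected in the vanishing entries of Table \ref{tab:invasion}), so the only invasion rate that can be negative is $\lambda_i(\nu_i)$, that of the \emph{absent} species. Hence it suffices to prove $\lambda_i(\nu_i)<0$; extinction of species $i$, and therefore the failure of persistence, then follow at once from Lemma \ref{lemma : extinct}.

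Next I would exhibit a single discriminant governing the three relevant rates. Put $D=r_2 s_1-r_1 s_2$. Inserting the explicit formulas of Lemma \ref{lem:invasionrates} into the sign pattern of configuration \eqref{ordrefitness} (namely $s_1>0>r_1$ and $r_2>0>s_2$) and clearing the common denominators, one checks that
\[
\operatorname{sign}\lambda_1(\nu_1)=\operatorname{sign}\lambda_2(\nu_2)=\operatorname{sign}(D),\qquad \operatorname{sign}\lambda_3(\nu_3)=-\operatorname{sign}(D).
\]
Thus $\lambda_i(\nu_i)<0$ amounts to $D>0$ when $i=3$ and to $D<0$ when $i\in\{1,2\}$.

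The core of the argument is to determine the sign of $D$ from the hypotheses, under the standing assumption of this subsection that all three vertices are repulsive, i.e. $\Lambda_1,\Lambda_2,\Lambda_3>0$ with $\Lambda_1=\max(\Lambda_0^3,\Lambda_1^2)$, $\Lambda_2=\max(\Lambda_1^3,\Lambda_1^1)$ and $\Lambda_3=\max(\Lambda_0^2,\Lambda_0^1)$. The essential algebraic device is the one already used in the proof of the preceding proposition: a pair of sign conditions on the $\Lambda$'s, after clearing the positive denominators $1+s_j$ and $1+r_j$, produces two one-sided inequalities whose product — once the common positive factors cancel — is exactly a sign condition on $D$. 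In this way $\{\Lambda_1^1<0,\ \Lambda_1^3>0\}$ and $\{\Lambda_1^2<0,\ \Lambda_0^3>0\}$ each force $D>0$, while $\{\Lambda_1^3<0,\ \Lambda_1^1>0\}$ and $\{\Lambda_1^2>0,\ \Lambda_0^3<0\}$ each force $D<0$.

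It remains to run the case analysis on which edge carries $\nu_i$. If $i=3$, then $\nu_3$ exists ($\Lambda_0^3,\Lambda_1^3>0$) while $\nu_1,\nu_2$ do not; repulsiveness of the third vertex gives $\Lambda_0^1>0$ or $\Lambda_0^2>0$, and the non-existence of the matching edge measure then forces $\Lambda_1^1<0$ or $\Lambda_1^2<0$, placing us in a ``$D>0$'' block, whence $\lambda_3(\nu_3)<0$. If $i=1$, then $\nu_1$ exists ($\Lambda_1^1>0$) while $\nu_3$ does not: either $\Lambda_1^3\le 0$, and with $\Lambda_1^1>0$ we are in a ``$D<0$'' block; or $\Lambda_1^3>0$, forcing $\Lambda_0^3\le 0$, and then $\Lambda_1=\max(\Lambda_0^3,\Lambda_1^2)>0$ forces $\Lambda_1^2>0$, again a ``$D<0$'' block; either way $\lambda_1(\nu_1)<0$. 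The case $i=2$ is obtained from $i=1$ by the symmetry exchanging simultaneously the two non-neutral species and the two environments, under which $D$ is invariant. I expect the only real difficulty to be the bookkeeping of this analysis: the hypotheses supply disjunctions (non-existence of $\nu_j$, repulsiveness of a vertex), so one must split into the correct sub-cases and verify each lands in one of the four blocks; the degenerate equalities $\Lambda_0^i=0$ or $\Lambda_1^i=0$ are non-generic and, by Lemma \ref{lem:density}, never produce an edge measure.
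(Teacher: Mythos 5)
Your proof is correct and follows essentially the same route as the paper's: existence/non\nobreakdash-existence of edge measures (together with vertex repulsiveness) yield pairs of sign conditions on the $\Lambda$'s whose product, after clearing the positive denominators, determines the sign of $r_2s_1-r_1s_2$, which by Proposition \ref{prop : impossiblepersis} fixes the sign of $\lambda_i(\nu_i)$, and extinction then follows from Lemma \ref{lemma : extinct}. If anything, your case analysis is more systematic than the paper's, which only writes out the computation for the configuration shown in its figure and dismisses the remaining case as ``the same reasoning''; note also that the weak inequalities ($\Lambda\leq 0$) coming from non-existence of an edge measure still yield the strict sign of $r_2s_1-r_1s_2$, since the paired inequality from the existing measure is strict.
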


\begin{proof}
Two case have to be considered :
\begin{enumerate}
    \item The invariant measure is on the face 3.
    \item The invariant measure is on the face 1 (or symmetrically on the face 2).

    Let us consider first the case 1, we are in the situation of the figure \ref{fig:prop21} ( or in a symmetrical case): 
    
\begin{figure}[htbp]
\includegraphics[scale=0.35]{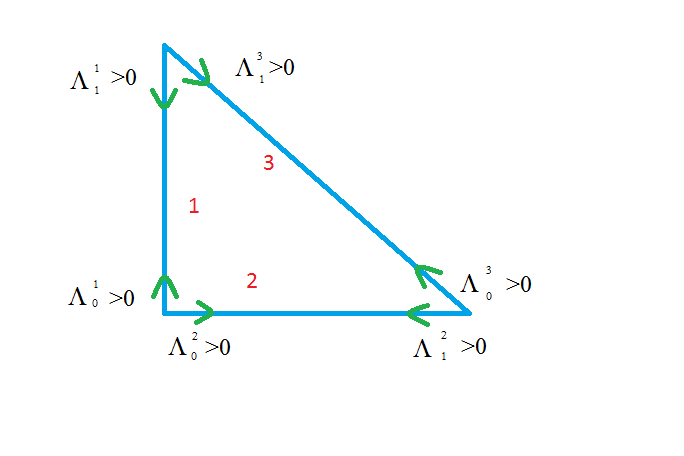}
\caption{Ergodic measure on each edge.}
\label{fig:prop21}
\end{figure}
\end{enumerate}

So the following inequalities hold : 

 \[ 
\left\{
\begin{aligned}
 \Lambda_{1}^1  & =-p_1\frac{r_1}{1+r_1}-p_2\frac{r_2}{1+r_2}<0 \\
      \Lambda^3_{1}&=p_1\frac{s_1-r_1}{1+r_1}+p_2\frac{s_2-r_2}{1+r_2}>0
\end{aligned}
\right.
\iff
\left\{
\begin{aligned}
  & -p_1\frac{r_1}{1+r_1}<p_2\frac{r_2}{1+r_2} \\
  & p_1\frac{s_1}{1+r_1}>-p_2\frac{s_2}{1+r_2}
\end{aligned}
\right.
\]
 \quad\\
we obtain  $ r_1s_2<r_2s_1 $, and so $\lambda_3(\nu_3)<0$.\\
By lemma \ref{lemma : extinct}, species 3 goes to extinction.

For the case 2, the same reasoning is still valid. Species 1 goes to extinction.

\end{proof}

The following property  clarifies the conditions to obtain  negative invasion rates. Moreover it states, it's impossible to obtain  three positive invasion rates $\lambda_i(\nu_i)$ for $i\in\{1,2,3\}$ : 
\begin{prop}
\label{prop : impossiblepersis}
The following equivalences hold:
$$\lambda_1(\nu_1)<0 \Leftrightarrow \lambda_2(\nu_2)<0 \Leftrightarrow \lambda_3(\nu_3)>0 \Leftrightarrow s_1r_2<s_2r_1$$

\end{prop}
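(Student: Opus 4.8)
The plan is to read the three invasion rates straight off Table \ref{tab:invasion} (equivalently Lemma \ref{lem:invasionrates}) and reduce each sign to the single comparison $s_1 r_2$ versus $s_2 r_1$. Recall that in configuration \eqref{ordrefitness} one has $s_1 > 0 > r_1$ and $r_2 > 0 > s_2$, so that $|s_1| = s_1$, $|r_1| = -r_1$, $|s_2| = -s_2$, $|r_2| = r_2$, and moreover $s_1 - r_1 > 0$ while $s_2 - r_2 < 0$. Since the constants $C_1$, $C_2$, $C^3$ appearing in Table \ref{tab:invasion} are strictly positive, the sign of each $\lambda_i(\nu_i)$ is that of its bracketed factor, and I would treat the three factors one at a time, in each case collecting over a common denominator and checking that the denominator is positive so that the numerator alone controls the sign.

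For species $1$, the bracket becomes
\[
\frac{s_1}{|r_1|} + \frac{s_2}{|r_2|} = \frac{s_1}{-r_1} + \frac{s_2}{r_2} = \frac{s_1 r_2 - s_2 r_1}{-r_1 r_2},
\]
and since $-r_1 > 0$ and $r_2 > 0$ the denominator is positive; hence $\lambda_1(\nu_1) < 0 \iff s_1 r_2 < s_2 r_1$. For species $2$ the identical manipulation gives
\[
\frac{r_1}{|s_1|} + \frac{r_2}{|s_2|} = \frac{r_1}{s_1} + \frac{r_2}{-s_2} = \frac{s_1 r_2 - s_2 r_1}{-s_1 s_2},
\]
with $-s_1 s_2 > 0$, so again $\lambda_2(\nu_2) < 0 \iff s_1 r_2 < s_2 r_1$.

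For species $3$, using $|s_1 - r_1| = s_1 - r_1$ and $|s_2 - r_2| = r_2 - s_2$, I would compute
\[
-\frac{r_1}{|s_1 - r_1|} - \frac{r_2}{|s_2 - r_2|} = -\frac{r_1}{s_1 - r_1} - \frac{r_2}{r_2 - s_2} = \frac{s_2 r_1 - s_1 r_2}{(s_1 - r_1)(r_2 - s_2)},
\]
whose denominator is a product of two positive factors; hence $\lambda_3(\nu_3) > 0 \iff s_2 r_1 > s_1 r_2 \iff s_1 r_2 < s_2 r_1$. Chaining the three equivalences yields the stated result. This is essentially a bookkeeping computation, so I do not expect any serious obstacle; the only point requiring care is the sign simplification of the absolute values, which is forced by configuration \eqref{ordrefitness}, together with the verification that each common denominator is genuinely positive. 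The mild ``coincidence'' that makes the proposition true is that all three numerators agree, up to an overall sign, with $s_1 r_2 - s_2 r_1$, which is exactly what produces the common threshold.
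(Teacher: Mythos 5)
Your proof is correct and takes essentially the same approach as the paper: both read the signs of $\lambda_1(\nu_1)$, $\lambda_2(\nu_2)$, $\lambda_3(\nu_3)$ off Table \ref{tab:invasion}, use the positivity of the constants $C_i$, and reduce each bracket to the single comparison $s_1 r_2$ versus $s_2 r_1$ via the sign pattern of configuration \eqref{ordrefitness}. Your write-up is in fact slightly more detailed, since the paper asserts the three equivalences without displaying the common-denominator manipulations that you carry out explicitly.
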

\begin{proof}
Remember, we are in situation \ref{fitdefa}, it may possible to simplify the following inequalities :
\begin{align*}
    \lambda_1(\nu_1)&<0 \iff  \frac{s_1}{|r_1|} + \frac{s_2}{|r_2|}<0 \iff  r_1s_2>s_1r_2\\
    \lambda_2(\nu_2)&<0 \iff  \frac{r_1}{|s_1|} + \frac{r_2}{|s_2|} \iff  r_1s_2>s_1r_2\\
    \lambda_3(\nu_3)&>0 \iff -\frac{r_1}{|s_1-r_1|} - \frac{r_2}{|s_2-r_2|}>0 \iff  r_1s_2>s_1r_2\\
\end{align*}
And we find the result.
\end{proof}

The previous property  states it impossible have persistence if we have two invariant measures on the faces 1 and 3 or 2 and 3. Because necessarily one of the invasion rates compared to one of these two measure is negative

\begin{prop}
If there exist two invariant measures on face 1 and 2, necessarily it exists one on face 3 and $\lambda_3(\nu_3)<0$.
\end{prop}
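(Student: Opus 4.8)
The plan is to reduce the whole statement to elementary inequalities between the parameters $s_1,s_2,r_1,r_2,p_1,p_2$. First I would record what the hypotheses give: by Proposition~\ref{prop:ergodic}, the existence of $\nu_1$ and of $\nu_2$ is equivalent to the four positivity conditions $\Lambda_0^1>0$, $\Lambda_1^1>0$, $\Lambda_0^2>0$ and $\Lambda_1^2>0$. Since we are in configuration \eqref{ordrefitness}, we have $s_1>0>s_2$, $r_2>0>r_1$, and all the denominators $1+s_i$, $1+r_i$ are positive. Setting $a=p_1|r_1|$, $b=p_2 r_2$, $c=p_1 s_1$ and $d=p_2|s_2|$, all positive, these four conditions read respectively $b>a$, $\frac{a}{1+r_1}>\frac{b}{1+r_2}$, $c>d$ and $\frac{d}{1+s_2}>\frac{c}{1+s_1}$.

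The main step is to show that $\nu_3$ exists, i.e. that $\Lambda_0^3>0$ and $\Lambda_1^3>0$. Expanding the definitions and grouping terms, I would write
\[
\Lambda_1^3=\left(\frac{a}{1+r_1}-\frac{b}{1+r_2}\right)+\left(\frac{c}{1+r_1}-\frac{d}{1+r_2}\right).
\]
The first bracket is positive because it is exactly the condition $\Lambda_1^1>0$. For the second bracket I would use that $r_1<0<r_2$ forces $1+r_1<1+r_2$, so that $\frac{c}{1+r_1}>\frac{c}{1+r_2}>\frac{d}{1+r_2}$, the last inequality being $c>d$ (from $\Lambda_0^2>0$); hence $\Lambda_1^3>0$. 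Symmetrically,
\[
\Lambda_0^3=\left(\frac{d}{1+s_2}-\frac{c}{1+s_1}\right)+\left(\frac{b}{1+s_2}-\frac{a}{1+s_1}\right),
\]
where the first bracket is positive by $\Lambda_1^2>0$, and the second is positive because $s_2<0<s_1$ gives $1+s_2<1+s_1$, so that $\frac{b}{1+s_2}>\frac{b}{1+s_1}>\frac{a}{1+s_1}$ using $b>a$ (from $\Lambda_0^1>0$). Thus $\Lambda_0^3>0$ and $\Lambda_1^3>0$, and $\nu_3$ exists by Proposition~\ref{prop:ergodic}.

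It then remains to determine the sign of $\lambda_3(\nu_3)$, which is now immediate. Using only $\Lambda_0^1=p_1 r_1+p_2 r_2>0$ and $\Lambda_0^2=p_1 s_1+p_2 s_2>0$ together with $r_1,s_2<0$, I get $p_2 r_2>p_1|r_1|$ and $p_1 s_1>p_2|s_2|$; multiplying these inequalities between positive numbers and dividing by $p_1p_2$ yields $s_1 r_2>|r_1||s_2|=s_2 r_1$, that is $s_1 r_2>s_2 r_1$. By Proposition~\ref{prop : impossiblepersis} this strict inequality is exactly equivalent to $\lambda_3(\nu_3)<0$, which concludes the proof.

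I expect the existence of $\nu_3$ to be the only genuine obstacle: the quantities $\Lambda_0^3$ and $\Lambda_1^3$ combine the two environments through different denominators, so one cannot compare them termwise against the hypotheses, and it is the decomposition-plus-monotonicity trick above (exploiting $1+r_1<1+r_2$ and $1+s_2<1+s_1$) that makes the positivity transparent. Once $\nu_3$ is known to exist, the sign of $\lambda_3(\nu_3)$ follows at once from the already established equivalence of Proposition~\ref{prop : impossiblepersis}.
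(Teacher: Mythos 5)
Your proof is correct and follows essentially the same route as the paper: you decompose $\Lambda_0^3$ as $\Lambda_1^2$ plus a remainder made positive by $\Lambda_0^1>0$ and the sign pattern of \eqref{ordrefitness} (the paper verifies the remainder via a common denominator and positive cross terms $p_1r_1s_2$, $p_2s_1r_2$, you via the monotonicity $1+s_2<1+s_1$; these are the same estimate), and you conclude identically by multiplying $\Lambda_0^1>0$ and $\Lambda_0^2>0$ to get $s_1r_2>s_2r_1$ and invoking Proposition~\ref{prop : impossiblepersis}. The only (welcome) difference is that you write out the $\Lambda_1^3>0$ case explicitly, which the paper dismisses as ``a similar reasoning.''
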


\begin{proof}
Assume it exists an invariant measure on edge 1 and 2. Let's prove $\Lambda_0^3$ and $\Lambda_1^3$ are strictly positive.
We are in the situation of   figure \ref{fig:prop23}

\begin{figure}[htbp]
\includegraphics[scale=0.35]{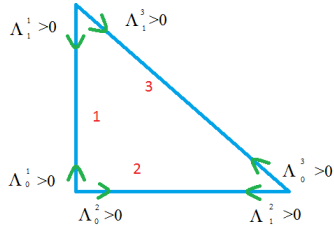}
\caption{behavior of the process on a neighbourhood of each vertex.}
\label{fig:prop23}
\end{figure}
 Then we have  
   \[ 
\left\{
\begin{aligned}
 \Lambda_{1}^2  & =  -p_1\frac{s_1}{1+s_1}-p_2\frac{s_2}{1+s_2}>0 \\
      \Lambda^0_{1}   & = r_1+p_1+r_2p_2>0
\end{aligned}
\right.
\]
   
 and \begin{align*}
     \Lambda_0^3&=p_1\frac{r_1-s_1}{1+s_1}+p_2\frac{r_2-s_2}{1+s_2}\\
     &=-p_1\frac{s_1}{1+s_1}-p_2\frac{s_2}{1+s_2}+ \frac{p_1r_1+p_2r_2+p_1r_1s_2+p_2s_1r_1}{(1+s_1)(1+s_2)}\\
     &=  \Lambda_{1}^2 + \frac{\Lambda_0^1+p_1r_1s_2+p_2s_1r_1}{(1+s_1)(1+s_2)}
 \end{align*}

Note that $p_1r_1s_2$ and $p_2s_1r_1$ are positive because $r_1$, $s_2$ are both negative and $s_1$, $r_2$ are both positive. So $\Lambda_0^3$ is positive.

A similar reasoning gives the same condition for  $\Lambda_1^3$.\\

Let us conclude the proof by noting that

 \[ 
\left\{
\begin{aligned}
 \Lambda_{0}^2  & =p_1s_1+p_2s_2>0 \\
      \Lambda^0_{1}   & = r_1+p_1+r_2p_2>0
\end{aligned}
\right.
\iff
s_1r_2>s_2r_1 \iff \lambda_3(\nu_3)<0
\]

And species 3 goes to extinction.

\end{proof}

\begin{cor}
If there are exactly two or three invariant measures, at least, an invasion rate $\lambda_i(\nu_i)$, is negative.
\end{cor}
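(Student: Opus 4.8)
The plan is to reduce the whole statement to the single scalar sign condition $s_1 r_2 \lessgtr s_2 r_1$ provided by Proposition~\ref{prop : impossiblepersis}, and then run a short combinatorial case analysis over which of the three edge measures $\nu_1,\nu_2,\nu_3$ are present. First I would record the basic dichotomy. Outside the degenerate locus $s_1 r_2 = s_2 r_1$ (on which all three rates vanish, so the claim should instead be read through the zero-rate non-persistence already covered by Proposition~\ref{prop:oneD}(4)), Proposition~\ref{prop : impossiblepersis} guarantees that exactly one of the following holds:
\[
\text{(i)}\quad \lambda_1(\nu_1)<0,\ \lambda_2(\nu_2)<0,\ \lambda_3(\nu_3)>0,
\qquad
\text{(ii)}\quad \lambda_1(\nu_1)>0,\ \lambda_2(\nu_2)>0,\ \lambda_3(\nu_3)<0,
\]
according to whether $s_1 r_2 < s_2 r_1$ or $s_1 r_2 > s_2 r_1$ (each rate being defined only for the measures that actually exist).

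Next I would treat case (i). Here both $\lambda_1(\nu_1)$ and $\lambda_2(\nu_2)$ are negative whenever $\nu_1$, respectively $\nu_2$, exists, so it suffices to observe that any admissible family of \emph{two or three} edge measures must contain $\nu_1$ or $\nu_2$. Indeed, among the subsets of $\{\nu_1,\nu_2,\nu_3\}$ the only one avoiding both $\nu_1$ and $\nu_2$ is the singleton $\{\nu_3\}$, which has cardinality one and is therefore excluded by hypothesis. Hence at least one of $\nu_1,\nu_2$ is present, and its invasion rate is negative.

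Then I would treat case (ii), where $\lambda_3(\nu_3)<0$ is the only negative rate, so I must ensure that $\nu_3$ belongs to every admissible family. If the family already contains $\nu_3$ there is nothing to do. The only family of size two or three omitting $\nu_3$ is the pair $\{\nu_1,\nu_2\}$; but the proposition on simultaneous measures on faces $1$ and $2$ shows that the coexistence of $\nu_1$ and $\nu_2$ forces $\nu_3$ to exist as well (with $\lambda_3(\nu_3)<0$). Thus this ``pair'' is in reality a triple, which both contradicts the assumption of \emph{exactly two} measures and, in the three-measure situation, contains $\nu_3$. In either reading, $\nu_3$ is present and $\lambda_3(\nu_3)<0$.

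I expect the only genuine obstacle to be precisely this last point: at face value the configuration $\{\nu_1,\nu_2\}$ seems to evade the argument in case (ii), since neither $\lambda_1(\nu_1)$ nor $\lambda_2(\nu_2)$ is negative there. The resolution is exactly the coexistence result just invoked, which forbids $\{\nu_1,\nu_2\}$ from occurring without $\nu_3$. Combining the two cases, every admissible configuration of exactly two or three edge measures carries a negative invasion rate $\lambda_i(\nu_i)$, which is the assertion of the corollary; by Lemma~\ref{lemma : extinct} the corresponding species $i$ then goes to extinction, so that persistence is impossible in all these configurations as well.
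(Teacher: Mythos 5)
Your proof is correct and follows essentially the same route as the paper: the paper's own justification is the one-line remark that the corollary is ``direct'' from Proposition~\ref{prop : impossiblepersis}, with the problematic configuration $\{\nu_1,\nu_2\}$ (the only two-element family avoiding $\nu_3$) ruled out by the immediately preceding proposition -- exactly the two ingredients your case analysis combines. If anything you are more careful than the original, since you make the sign dichotomy and the subset enumeration explicit and you flag the degenerate locus $s_1r_2=s_2r_1$, where all the rates $\lambda_i(\nu_i)$ vanish and the strict-negativity claim must be reinterpreted, a case the paper passes over in silence.
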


\begin{proof}
The proof is direct with property \ref{prop : impossiblepersis}
\end{proof}

It is impossible to obtain persistence with only two environments and three species. In the next section we give an example of persistence with three species and three environments.

\subsection{Persistence for 3 species and 3 environments}
 
In this part we  give two examples of a system with 3 species and 3 environments which is persistent. In the first example, each species has, in one environment, the best fitness. In the second, the last species has never the best fitness. 

With obvious notations, we set
\begin{align*}
    &\Lambda_{0}^1=p_1 r_1+p_2 r_2+p_3r_3,\quad \Lambda_{1}^1=-p_1\frac{r_1}{1+r_1}-p_2\frac{r_2}{1+r_2} -p_3\frac{r_3}{1+r_3} \\
    &\Lambda_{0}^2=p_1s_1+p_2s_2+p_3s_3 ,\quad \Lambda_{1}^2=-p_1\frac{s_1}{1+s^1}-p_2\frac{s_2}{1+s_2} -p_3\frac{s_3}{1+s_3} \\
    & \Lambda_{0}^3=p_1\frac{r_1-s_1}{1+s_1}+p_2\frac{r_2-s_2}{1+s_2}+p_3\frac{r_3-s_3}{1+s_3}, \\ &\Lambda^3_{1}=p_1\frac{s_1-r_1}{1+r_1}+p_2\frac{s_2-r_2}{1+r_2}+p_3\frac{s_3-r_3}{1+r_3}.
\end{align*}

To prove persistence, with theorem \ref{thm:persistence}, we need to calculate   the invasion rates $\lambda_i(\nu_i)$ for possible ergodic measures $\nu_i$ (remember if $k\neq i$, $ \lambda_k(\nu_i)=0$). Thus, we need to obtain an explicit formula for the $\nu_i$ density. We could follow the same reasoning as in lemma \ref{lem:density}.
If $(U_t)_{t>0}$ admits an invariant measure  on $(0,1)\times E$, $\Pi$, it is absolutely continuous with respect to the Legesgue measure and admits a density. We still define $h_i$ the density of $\Pi(.,i) $ for $i\in \{1,2,3\}$. The $h_i$ are in $\mathscr{C}^{\infty}$ and verifie the Fokker-Planck equation  :
\begin{equation}
    \begin{cases}
     -h_1(q_{1,2}+q_{1,3}) + q_{2,1} h_2+q_{3,1}h_3 & = -( g_1 h_1)'\\
   -h_2(q_{2,1}+q_{2,3}) + q_{1,2} h_1+q_{3,2}h_3 & = ( g_2 h_2)'\\
    -h_3(q_{3,2}+q_{3,1}) + q_{1,3} h_1+q_{2,3}h_2 & = ( g_3 h_2)'
    \end{cases}
    \label{eq:fokker2}
\end{equation}

Unfortunately, the explicit computation of $(h_i)$ now becomes tedious, and thus we have no explicit expression for the invasion rates $\lambda_i(\nu_i)$. 
In  the following example, we compute numerically the invasions rates $\lambda_i(\nu_i)$ and show that we are in the situation of Theorem \ref{thm:persistence}.

\begin{ex}
\label{ex:pdmppersi3env1}
In this example, we assume  $q_{i,j}=q_{k,l},\forall i,j,k,l \in \{1,2,3\} $ , and $\mathbf{s}_1=\{1,\frac{1}{2}\}$, $\mathbf{s}_2=\{\frac{-1}{4},\frac{-1}{2}\}$, $\mathbf{s}_3=\{\frac{-1}{3},\frac{1}{3}\}$.

And it follows

\begin{align*}
    &\Lambda_{0}^1=\frac{1}{6},\quad \Lambda_{1}^1=\frac{5}{36},\quad
    \Lambda_{0}^2=\frac{5}{36},\quad \Lambda_{1}^2=\frac{1}{6},\quad
     \Lambda_{0}^3=\frac{5}{36},\quad \Lambda^3_{1}=\frac{1}{6}.
\end{align*}

Then there are exactly 3 ergodic measures $\nu_i, i\in \{1,2,3\}$, so we are  in the same situation than picture \ref{fig:prop23}.\\

For each of them, we  approximate the invasion rate, i.e the quantity $\lambda_i(\nu_i)=\int_{\mathscr{E}_0^i\times E} F_{s}^i(x)d\nu_i(x)$.
A way of doing it, since the measure is ergodic, is to use the ergodic theorem. So if $(X_t,s_t)_{t>0}$ is the PDMP define by equation \ref{eqprincipalepdmp} and starting in $\mathrm{int}(\mathscr{E}_0^i)\times E$, $\frac{1}{T}\int_0^TF^i_{s_t}(X_t)dt\underset{T\to +\infty}{\longrightarrow}\lambda_i(\nu_i)$.\\  
Note the initial point $(X_0,s_0)$ has no influence on the previous result. In the following we chose  arbitrary initial conditions, others would have led to the same results.\\
We proceed as follows: \begin{itemize}
    \item Simulate a large number of PDMP trajectories (1000), on $[0,T]$ for $T$ big enough and starting in $\mathrm{int}(\mathscr{E}_0^i)\times E$.
    \item For each simulation calculate $\frac{1}{T}\int_0^TF^i_{s_t}(X_t)dt$.
    \item Take the average on the trajectories to improve our result. 
\end{itemize}

Results :\\
In each simulation we take $T=80$ and the number of trajectories simulated is 1000.
\begin{enumerate}
    \item For $X_0=[0,0.5],s_0=\mathbf{s}_1$ we obtain $\lambda_1(\nu_1)=0.0191$
    \item For $X_0=[0.5,0],s_0=\mathbf{s}_1$ we obtain $\lambda_2(\nu_2)=0.0594$
\item For $X_0=[0.5,0.5],s_0=\mathbf{s}_1$ we obtain $\lambda_3(\nu_3)=0.090$
\end{enumerate}

For this configuration of environments we obtain positive invasion rates. So  it proves, using \ref{thm:persistence}, that persistence is possible for 3 species and 3 environments.

\begin{figure}[htbp]
\begin{minipage}[c]{.45\linewidth}
\begin{center}
\includegraphics[scale=0.35]{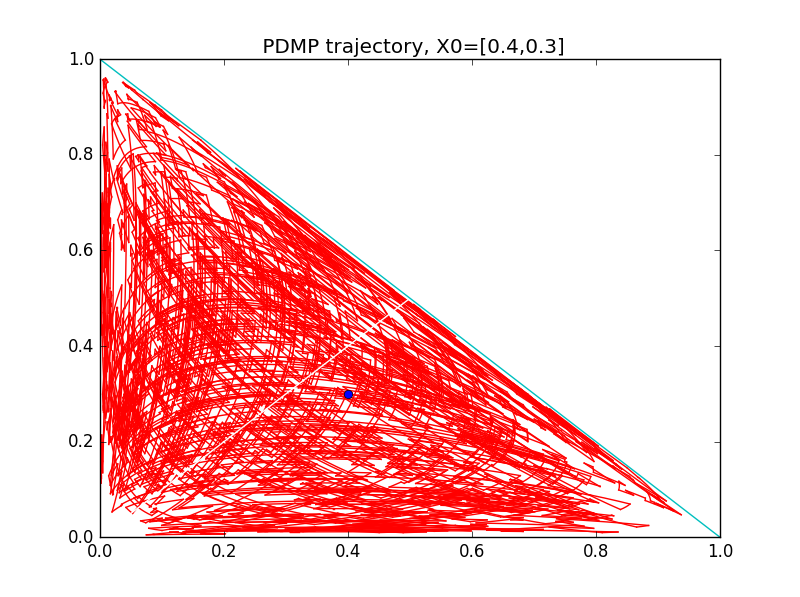}

\label{fig:pdmppers1}
\end{center}
\end{minipage}
\hfill
\begin{minipage}[c]{.45\linewidth}
\begin{center}
\includegraphics[scale=0.35]{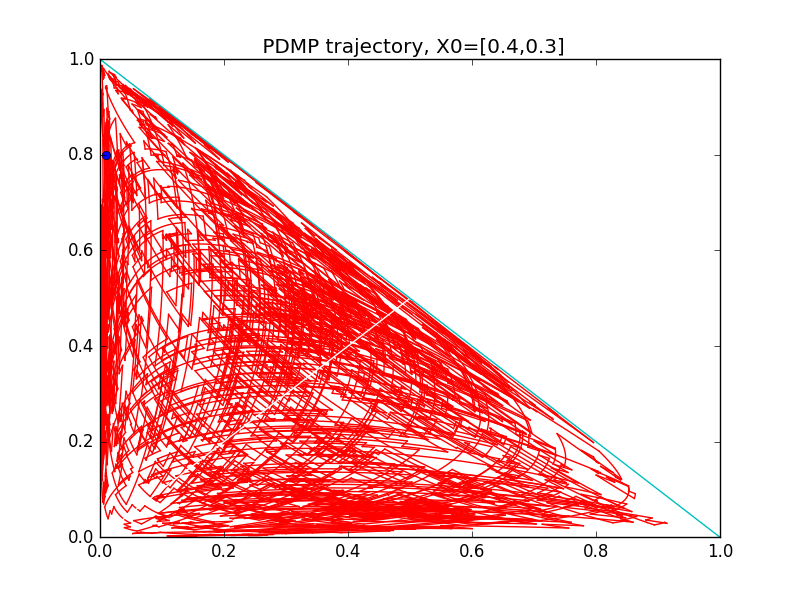}

\label{fig:pdmppers2}
\end{center}
\end{minipage}
\caption{A PDMP trajectory. $ T=1000$. Left figure: $ X_0=[0.4,0.3]$.     Right figure: $X_0=[0.01,0.8]$}
\end{figure}

\end{ex}

\begin{ex}
\label{ex:pdmppersi3env2}

We now give an example of persistence in which a species has never the best fitness. Let us choose as parameters $q_{i,j}=q_{k,l},\forall i,j,k,l \in \{1,2,3\} $ , and $\mathbf{s}_1=\{0.1,-0.3\}$, $\mathbf{s}_2=\{-0.33,0.1\}$, $\mathbf{s}_3=\{0.27,0.25\}$.

So that,

\begin{align*}
    &\Lambda_{0}^1=0.5,\quad \Lambda_{1}^1=0.13,\quad
    \Lambda_{0}^2=0.04,\quad \Lambda_{1}^2=0.19,\quad
     \Lambda_{0}^3=0.26,\quad \Lambda^3_{1}=0.20.
\end{align*}

As in the previous example,  there are exactly 3 ergodic measures $\nu_i, i\in \{1,2,3\}$ and  we are  in the same situation than Figure \ref{fig:prop23}.\\
Let us calculate the invasion rate, with the same method :

\begin{enumerate}
    \item For $X_0=[0,0.5],s_0=\mathbf{s}_1$ we obtain $\lambda_1(\nu_1)=0.016$
    \item For $X_0=[0.5,0],s_0=\mathbf{s}_1$ we obtain $\lambda_2(\nu_2)=0.019$
\item For $X_0=[0.5,0.5],s_0=\mathbf{s}_1$ we obtain $\lambda_3(\nu_3)=0.009$
\end{enumerate}

The invasion rates are strictly positive so we conclude that in this example; we have persistence, even if the last species has never the best fitness.
A trajectory of the PDMP is plotted in Figure \ref{fig:persist2}.

\begin{figure}
    \centering
    \includegraphics[scale=0.35]{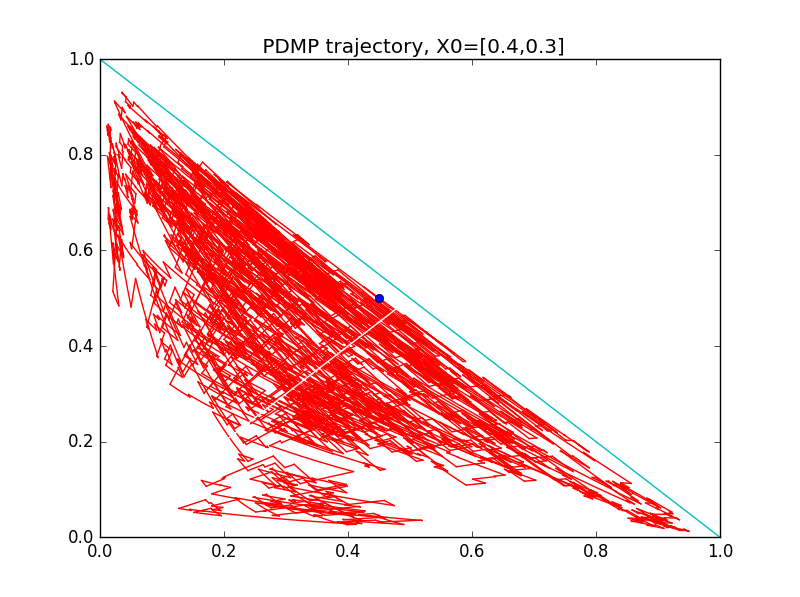}
    \caption{PDMP trajectory, T=1000, 
    $X_0=[0.45,0.5]$. 
    }
    \label{fig:persist2}
\end{figure}

\end{ex}
In the two previous examples, the process is persistant, and thus admits at least one stationnary distribution $\Pi$ such that $\Pi(M_+)= 1$. The numerical simulations presented in Figure  \ref{fig:pdmppers2} suggest that in the case of Example \ref{ex:pdmppersi3env1}, that $\Pi$ has full support, i.e. its support is the whole space $M$. The following proposition proves this fact, as well as the exponential convergence of the process towards $\Pi$ for general coefficients.

\begin{prop}
Assume that $\mathbf{s}_1$, $\mathbf{s}_2$, $\mathbf{s}_3$ are such that 
\begin{equation}
  \begin{cases}
  s_1 > 0 > r_1\\
  r_2 > s_2 > 0\\
  0 > r_3 > s_3
    \end{cases}
    \label{eq:fullsupport1}
\end{equation}
or
\begin{equation}
  \begin{cases}
  s_1 > r_1 > 0\\
  r_2 > 0 > s_2\\
  0 > s_3 > r_3
    \end{cases}
    \label{eq:fullsupport2}
\end{equation}
Then, if the process is persistent :
\begin{enumerate}
    \item The stationary distribution $\Pi$ satisfying $\Pi(M_+)=1$ is unique and absolutely continuous with respect to the Lebesgue measure;
    \item $\Pi$ has full support;
    \item If furthermore $\alpha:= s_2r_3 - r_2 s_3 + s_3 r_1 - s_1 r_3 + s_1r_2 - s_2r_1 \neq 0$, then there exist $C, \theta, \gamma > 0$ such that, for all $(x,y,s) \in M_+$ and all $t \geq 0$,
    \[
    \| \mathbb{P}_{(x,y,s)}( U_t \in \cdot ) - \Pi \|_{TV} \leq C \left( \frac{1}{x^{\theta}} + \frac{1}{y^{\theta}} + \frac{1}{(1-x-y)^{\theta}} \right) e^{ - \gamma t }.
    \]
\end{enumerate} 
\end{prop}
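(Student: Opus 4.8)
The plan is to establish the three points in order, relying on the general PDMP theory of \cite{BMZIHP} and \cite{BHM15} that has already been invoked earlier in the paper. The key structural observation is that conditions \eqref{eq:fullsupport1} and \eqref{eq:fullsupport2} are precisely the configurations in which the environment, taken in a suitable convex combination, makes every vertex $e_i$ repulsive; consequently, since the process is assumed persistent, the empirical occupation measure is tight in $M_+$ and every limit point is an invariant probability measure supported in $M_+$. The existence of $\Pi$ with $\Pi(M_+)=1$ thus follows exactly as in the proof of point (3) of Proposition \ref{prop:oneD}, from \cite[Th.~2.1]{benaimpersistence}.

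First I would prove absolute continuity and uniqueness (point 1). The strategy, as in Lemma \ref{lem:density}, is to verify the weak bracket (Hörmander-type) condition for the family of vector fields $\{G_{s}\}_{s\in E}$ together with the generator $Q$: one checks that the Lie algebra generated by the drifts and their brackets spans the tangent space at points of $M_+$. Since each $G_s$ is a nonvanishing smooth vector field on $\mathrm{int}(\mathscr{E})$ and the three environments point in genuinely different directions under \eqref{eq:fullsupport1} or \eqref{eq:fullsupport2}, the bracket condition holds on $M_+$, and \cite{BMZIHP} (or \cite{BH12}) yields uniqueness together with absolute continuity of $\Pi$ with respect to Lebesgue measure; smoothness of the density then follows from \cite{BHM15}.

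Next I would address full support (point 2). By \cite[Prop.~3.17]{BMZIHP}, the support of $\Pi$ is the closure of the set of points accessible from the support, so it suffices to show that every point of $M_+$ is accessible and that the whole boundary is reachable as a limit. The mechanism is the one already exploited in Theorem \ref{thm:neutralinvade} and Corollary \ref{droiteextinctioncorr}: the sign pattern \eqref{eq:fullsupport1}--\eqref{eq:fullsupport2} is a genuine cyclic (rock-paper-scissors) configuration, so following the flow of a single environment drives the process close to one edge, a switch then sends it toward the next edge, and iterating lets one steer the process into any prescribed neighbourhood of any target point in $M$. Making this rigorous requires exhibiting, for each target, an explicit admissible control sequence of environments and holding times, which is the most delicate bookkeeping in the argument.

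Finally, point (3) upgrades qualitative convergence to exponential convergence in total variation. Here the plan is to combine the positive Harris recurrence coming from persistence with a Lyapunov/Foster--Lyapunov estimate near the boundary, as done for Proposition \ref{prop:oneD}(3): one builds a function blowing up like $x^{-\theta}+y^{-\theta}+(1-x-y)^{-\theta}$ near $M_0$ and shows it is a Lyapunov function for the process, the positivity of the invasion rates guaranteeing the required negative drift. The nondegeneracy condition $\alpha\neq 0$ ensures the bracket condition holds uniformly so that the density does not degenerate, and one then invokes \cite[Th.~4.10]{benaimpersistence} together with \cite[Th.~4.4]{BMZIHP} to obtain the stated geometric rate with the explicit boundary-dependent prefactor. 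I expect the accessibility/full-support step to be the main obstacle, since it is the only point where the cyclic structure of the fitnesses must be turned into a concrete steering argument rather than invoked as a black box.
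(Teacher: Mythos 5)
Your plan has the right skeleton (existence from persistence, uniqueness/absolute continuity from a bracket condition, full support from accessibility, exponential convergence from the cited corollaries), but the three verifications that constitute the actual content of the proof are missing, and the one you flag yourself as "the main obstacle" is a genuine gap. For point (2), the steering scheme --- drive the process near an edge, switch, iterate --- runs into a structural fact you do not address: every vertex $e_i$ is a common zero of all the vector fields $G_s$, so trajectories only approach vertices asymptotically and can never restart from them; moreover the reachable set from a fixed $x$ is a union of concatenated orbit curves, so "getting close to $e_{i+1}$" does not imply you can approximately track every orbit emanating from a neighbourhood of $e_{i+1}$: near that (unstable) vertex nearby initial conditions are separated by the flow over the unbounded time horizons involved, and controlling this is exactly what an honest proof would have to do. The paper avoids all of it with a planar crossing argument: under \eqref{eq:fullsupport1} the forward orbit $\gamma_+^i(x)$ joins $x$ to $e_i$ while the backward orbit $\gamma_-^i(y)$ joins $y$ to $e_{i+1}$ (indices modulo $3$); since the three forward curves from $x$ and the three backward curves from $y$ connect these two points to all three vertices of the simplex, some $\gamma_+^i(x)$ must cross some $\gamma_-^j(y)$ at an interior point $z$, and exact concatenation $y=\varphi^j_s(\varphi^i_t(x))$ gives $y\in\mathcal{O}^+(x)$, hence $\Gamma=\mathscr{E}$. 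That crossing argument is the missing idea.

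Points (1) and (3) also rest on assertions where a proof is required. For (1), "the three environments point in genuinely different directions" is not a verification of the weak bracket condition: a priori the $G_s$ could be pairwise collinear on large sets, and what is needed is linear independence at \emph{some accessible} point. The paper gets this by contradiction: if $G_1$ and $G_2$ were collinear along all of $\gamma_+^1(x)$, then $\gamma_+^1(x)$ and $\gamma_-^2(x)$ would be the same curve, which is impossible since they terminate at $e_1$ and $e_3$ respectively. For (3), the entire role of the hypothesis $\alpha\neq 0$ is the identity
\[
\det\left( G_1 - G_2,\, G_2 - G_3 \right)(x,y) \;=\; \frac{x y (1 - x - y)}{h(x,y)}\,\alpha, \qquad h(x,y)>0,
\]
which shows that the strong bracket condition (independence of $G_1-G_2$ and $G_2-G_3$) holds at interior accessible points precisely when $\alpha\neq0$; Corollary 6.3 of \cite{benaimpersistence} then delivers uniqueness, absolute continuity and the exponential total-variation bound with the boundary prefactor, the Lyapunov estimates you propose to build by hand being already supplied by persistence inside that corollary. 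Without this determinant computation, the appearance of $\alpha$ in the statement is unexplained and your "uniform bracket condition" claim is unsupported.
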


\begin{proof}
We only do the proof in the case \eqref{eq:fullsupport1}; it can be proved in the same way for \eqref{eq:fullsupport2}. 

For $i \in \{1,2,3\}$, we denote by $(\varphi_t^i(x))_{t \geq 0}$ the flow generated by $G_i:=G_{s_i}$ and started at $x$. That is, $\varphi_t^i(x)$ is solution to
\[
\begin{cases}
\frac{\partial \varphi_t^i(x)}{\partial t} = G_i(\varphi_t^s(x))\\
\varphi_0^i(x) = x.
\end{cases}
\]
 For $m \geq 1$,  $\mathbf{i}=(i_1, \ldots, i_m) \in \{1,2,3\}^m$ and $\mathbf{u} = (u_1,\ldots,u_m) \in \mathbb{R}_+^m$, we denote by $\mathbf{\Phi}_{\mathbf{u}}^{\mathbf{i}}$ the composite flow: $\mathbf{\Phi}_{\mathbf{u}}^{\mathbf{i}} = \varphi_{u_m}^{i_m} \circ \ldots \circ \varphi_{u_1}^{i_1}$.  For $x \in M$ and $t \geq 0$, we denote by $\mathcal{O}^+_t(x)$ (resp. $\mathcal{O}^+(x)$)  the set of points that are reachable from $x$ at time $t$ (resp. at any nonnegative time) with a composite flow: $$\mathcal{O}^+_t(x)=\{ \mathbf{\Phi}_{\mathbf{v}}^{\mathbf{i}} (x), \: (\mathbf{i},\mathbf{v}) \in E^m \times \mathbb{R}_+^m, m \in \mathbb{N}, v_1 + \ldots + v_m = t\},$$

$$ 
\mathcal{O}^+(x) = \bigcup_{t \geq 0} \mathcal{O}^+_t(x).
$$ 
We define the set of points that are accessible from $\mathscr{E}_+$ as 
\[
\Gamma = \bigcap_{ x \in  \mathscr{E}_+} \overline{\mathcal{O}^+(x)}.
\]
According to Corollary 4.6 and Remark 14 in \cite{benaimpersistence} and to Theorem 3.4 and Proposition 3.11 in \cite{BMZIHP}, $\Gamma \times E$ is included in the support of $\Pi$ for all stationary distribution $\Pi$ such that $\Pi(M_+) = 1$. Thus, point (2) is proved if we show that $\Gamma = \mathscr{E}$.

For $x \in \mathscr{E}$ and $i \in \{1,2,3\}$, we let $\gamma_+^i(x)$ and $\gamma_-^i(x)$ denote the positive and negative orbit, respectively,  of $x$ under the flow $\varphi^i$, namely :
\[
\gamma_+^i(x) = \{ \varphi_t^i(x), t \geq 0 \}, \quad \gamma_-^i(x) = \{ \varphi_t^i(x), t \leq 0 \}.
\]
In situation \eqref{eq:fullsupport1}, for all $x \in \mathscr{E}_+$, $\gamma_+^i(x)$ is a regular curve linking $x$ to $e_i$, while $\gamma_-^i(x)$ is a curve linking $x$ to $e_{i+1}$, where $i+1$ is taken modulo $3$. We claim that for all $x,y \in \mathscr{E}_+$ with $x \neq y$, there exist $i,j \in \{1,2,3\}$ such that $\gamma_+^i(x) \cap \gamma_-^j(y) \neq \emptyset$. In that case, let $z \in \gamma_+^i(x) \cap \gamma_-^j(y) \neq \emptyset$. Then, there exist $t,s > 0$ such that $z = \varphi_t^i(x) = \varphi_{-s}^j(x)$. Hence, by the flow property of $\varphi^j$, we get $y = \varphi^j_s \circ \varphi_t^i(x)$, and thus $y$ is accessible from $x$. This shows that $\mathscr{E}_+ \subset \mathcal{O}^+(x)$, for all $x \in  \mathscr{E}_+$, hence $\mathscr{E} \subset \Gamma$ and point (2) is proven since $\mathscr{E} \supset \Gamma$. We now prove the claim. Let $x,y \in \mathscr{E}_+$ with $x \neq y$. Then the $\gamma_+^i(x), i \in \{1,2,3\}$ are three regular curves linking $x$ to each vertex of $\mathscr{E}_0$, while  the $\gamma_-^j(x), i \in \{1,2,3\}$ are three regular curves linking $y$ to each vertex of $\mathscr{E}_0$. In particular, at least one of these three first curves has to cross one of the three other, which proves the claim.

We pass to the proof of point (1). Let $x \in \mathscr{E}_+$. Then, there must exists a point $y \in \gamma_+^1(x)$ such that $G_1(y)$ and $G_2(y)$ are linearly independent. If not, one would have for all $y \in \gamma_+^1(x)$, $G_1(y) = \alpha(y) G_2(y)$ for some negative function $\alpha$, which would imply that $\gamma_+^1(x) = \gamma_-^2(x)$. This is a contradiction since $\gamma_+^1(x)$ is a curve linking $x$ to $e_1$ while $\gamma_-^2(x)$ is a curve linking $x$ to $e_3$. Hence, the weak bracket condition holds at $y$ which belongs to $\Gamma$, thus $\Pi$ has to be unique and absolutely continuous with respect to the Lebesgue measure by Corollary 6.3 in \cite{benaimpersistence}.

To prove point (3), we also use Corollary 6.3 in \cite{benaimpersistence}. According to this corollary (which is a consequence of results in \cite{BMZIHP} and \cite{BH12}), it suffices to find an accessible point $(x,y)$ (thus a point in $\mathscr{E}$) such that $G_1(x,y) - G_2(x,y)$ and $G_2(x,y) - G_3(x,y)$ are lineary independent. One can check that 
\[
\det \left( G_1 - G_2, G_2 - G_3 \right)(x,y) = \frac{x y (1 - x -y)}{h(x,y)}(s_2r_3 - r_2 s_3 + s_3 r_1 - s_1 r_3 + s_1r_2 - s_2r_1),
\]
where $h(x,y) = (1 + s_1 x + r_1 y)(1 + s_2 x + r_2 y)(1 + s_3 x + r_3 y)>0$. In particular, if $s_2r_3 - r_2 s_3 + s_3 r_1 - s_1 r_3 + s_1r_2 - s_2r_1 \neq 0$; $G_1(x,y) - G_2(x,y)$ and $G_2(x,y) - G_3(x,y)$ are lineary independent and point (3) is proven.
\end{proof}

\begin{remarque}
Using the same kind of proof, it can be shown that $\Pi$ has full support in more general situation. Indeed, the assumption that the process is persistent implies that each of the species has exactly once the worst fitness. Indeed, if one species (say 1) has never the worst fitness, it has to be better than another one (say 2) in each environment, which implies that species 2 will go to extinction according to Corollary \ref{droiteextinctioncorr}. Moreover, if the process is persistent, by Theorem \ref{th:alwaysthebest}, none of the species can have the best fitness in every environment. Thus, in the persistent case, we are either in case \ref{eq:fullsupport1} or \ref{eq:fullsupport2}, that are handled by the previous proposition, or in a situation like in Example \ref{ex:pdmppersi3env2}. That is
\begin{equation}
  \begin{cases}
  s_1 > 0 > r_1\\
  r_2 > 0 > s_2\\
  s_3 > r_3 > 0
    \end{cases}
    \label{eq:fullsupport3}
\end{equation}
With the same notations as in the proof of the previous proposition, one can see that the $\gamma_-^i(y)$ for $i \in \{1,2,3\}$ are still three regular curves linking one to each vertex of $\mathscr{E}_0$. However, now $\gamma_+^1(x)$ and $\gamma_+^3(x)$ are curves linking $x$ to $e_1$, while $\gamma_+^2(x)$ is a curve linking $x$ to $e_2$. To be able to conclude as in the previous proof, we need $e_3=0$ to be accessible from $x$. This is for example the case if one can find $a_1,a_2,a_3 \geq 0$ such that 
\begin{equation}
    a_1 s_1 + a_2 s_2 + a_3 s_3 < 0 \quad \mbox{and} \quad a_1 r_1 + a_2 r_2 + a_3 r_3 < 0, 
    \label{eq:0access}
\end{equation}
by the argument given in the proof of Theorem \ref{thm:neutralinvade}. In this case, $e_2 \in \mathcal{O}^+(x)$ and as before, $\mathcal{0}^+(x)$ has to have a nonempty intersection with one of the $\gamma_-^i(y)$, and we can conclude that in this situation, $\Pi$ is unique and has full support. In Example \ref{ex:pdmppersi3env2}, take $a_1=a_2=1/2$ and $a_3 =0$, then \eqref{eq:0access} is satisfied and thus $\Pi$ is unique and has full support. Note also that the condition $ s_2r_3 - r_2 s_3 + s_3 r_1 - s_1 r_3 + s_1r_2 - s_2r_1 \neq 0$ is satisfied,  thus the process converges exponentially fast to $\Pi$.
\end{remarque}

%

%


\subsection{Conclusion}
In this part, we treat the whole case of tree species and two environments, and we prove persistence is impossible in this situation. On the other hand, if we deal with 3 species and 3 environments, we are able to exhibit configurations where persistence is possible.
Of course many other configurations of fitness gives persistence to.
But, we actually are unable to give explicit conditions on fitness to  characterize them. However numerical (deterministic) approximations may be used for each environment.\\

In view of the results of the previous sections, we formulate the following conjecture, that we have proved for 2 and 3 species.
\begin{conj}
Persistence of the system is possible if and only if there are at least as many environments as there are species.
\end{conj}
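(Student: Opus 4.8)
The plan is to prove the two implications separately, using a competitive-exclusion structure hidden in the per-capita growth rates. The backbone of both directions is the following elementary identity: fixing an environment $s_j$ and writing $c_j(x) = \sum_{k=1}^{S} s_j^k x^k$, one has
\[
F^i_{s_j}(x) = \frac{s_j^i - c_j(x)}{1+c_j(x)} = \frac{s_j^i}{1+c_j(x)} - \frac{c_j(x)}{1+c_j(x)},
\]
so that $F^i_{s_j}(x)$ is affine in the scalar $s_j^i$ with coefficients depending on $(j,x)$ but \emph{not} on the species $i$. Integrating against any $\mu \in \mathscr{P}_{erg}(M)$ therefore gives
\[
\lambda_i(\mu) = \sum_{j=1}^{K} w_j(\mu)\, s_j^i + \beta(\mu), \qquad w_j(\mu) = \int \frac{d\mu_j(x)}{1+c_j(x)} > 0,
\]
where positivity of $w_j(\mu)$ comes from $s^k > -1$ and from the fact that the environmental marginal of any invariant measure is $\mu = \sum_j p_j \delta_{s_j}$ with all $p_j > 0$; moreover $\beta(\mu) = \lambda_{S+1}(\mu)$ because $s_j^{S+1} = 0$.

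\emph{Necessity} ($K \le S \Rightarrow$ no persistence). Suppose the system is persistent. Then tightness of the empirical occupation measures (as in \cite[Th. 2.1]{benaimpersistence}) produces an ergodic invariant measure $\pi$ with $\pi(M_+) = 1$, and the argument of Remark \ref{rem:invasionrates} — writing $\tfrac1t \log X_t^i = \tfrac1t\int_0^t F^i_{s_u}(X_u)\,du + \tfrac1t \log X_0^i$ and using that $\log X_t^i$ stays bounded along a recurrent interior trajectory — forces $\lambda_i(\pi) = 0$ for every $i$. Taking $i = S+1$ gives $\beta(\pi) = 0$, and the identity above then yields
\[
\sum_{j=1}^{K} w_j(\pi)\, s_j = 0 \quad \text{in } \mathbb{R}^S, \qquad w_j(\pi) > 0,
\]
with $s_j = (s_j^1,\dots,s_j^S)$. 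By convex duality (Stiemke's theorem) this means $0$ lies in the relative interior of $\mathrm{conv}\{s_1,\dots,s_K\}$, whose affine hull has dimension at most $K-1$. When $K \le S$ this is a proper subspace of $\mathbb{R}^S$, so there is $0 \neq v \in \mathbb{R}^{S+1}$ with $\sum_i v_i = 0$ and $\sum_{i} v_i s_j^i = 0$ for all $j$; plugging this into the affine identity shows that $\Phi(X) = \sum_{i} v_i \log X^i$ is a first integral shared by every flow $G_{s_j}$.

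The main obstacle is to turn this first integral into a failure of persistence, and this is genuinely delicate. Under the absolute-continuity notion used throughout (H-persistence, which in every case we treat yields a stationary $\pi$ with a density) the conclusion is immediate: a common first integral confines the process to the codimension-one leaves $\{\Phi = \mathrm{const}\}$, so no invariant measure can be absolutely continuous on $M_+$, a contradiction. For the bare stochastic-persistence definition, however, the degenerate stratum is not innocuous: for instance with $S=2$, $K=2$ and $\mathbf{s}_1 = (1,-1,0)$, $\mathbf{s}_2 = (-1,1,0)$ one checks that $XY/Z^2$ is conserved, so the process stays on a compact interior leaf and is (weakly) persistent. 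Thus the clean statement should be read up to this measure-zero set of fitness matrices (equivalently, for H-persistence), and the core of the necessity proof is precisely the verification that these first-integral configurations carry no absolutely continuous stationary measure — the higher-dimensional analogue of case $(4)$ of Proposition \ref{prop:oneD} and of Proposition \ref{prop : impossiblepersis}.

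\emph{Sufficiency} ($K \ge S+1$). Here I would exhibit an explicit persistent configuration for each pair $(S,K)$. The identity above dictates the recipe: choose the $K$ environmental vectors $s_j \in \mathbb{R}^S$ so that $0$ lies strictly inside $\mathrm{conv}\{s_1,\dots,s_K\}$ — always possible once $K \ge S+1$, e.g. taking $s_1,\dots,s_{S+1}$ to be the vertices of a simplex centred at the origin — together with symmetric rates. One then verifies the criterion of Theorem \ref{thm:persistence}: the centring makes no species dominate on average, so every vertex is repulsive ($\Lambda_{i_0} > 0$, via Theorem \ref{thm:suffinvade}), while each face $\mathscr{E}_0^i$ carries an induced switching system of the same type with one fewer species, to which one applies the statement inductively. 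The delicate bookkeeping — and the reason the authors proceed numerically in Section 6 — is to assemble the global weights $c^i > 0$ of Theorem \ref{thm:persistence} from the face data by a Farkas/linear-programming argument, uniformly in $S$ and $K$; combined with the absence of explicit ergodic measures on $M_0$ in dimension $S \ge 3$ flagged by the authors, this is where the real work of the conjecture lies.
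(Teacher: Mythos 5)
First, keep in mind that the paper does \emph{not} prove this statement: it is posed as a conjecture, established only for $S+1=2$ and $S+1=3$ (and, for three species and three environments, the persistence half rests on numerically computed invasion rates), so your attempt must stand on its own. Your necessity half has the right skeleton and is genuinely more general than anything in the paper: the affine identity $\lambda_i(\mu)=\sum_j w_j(\mu)\,s_j^i+\lambda_{S+1}(\mu)$ with $w_j(\mu)>0$, combined with the standard fact that an ergodic measure charging $M_+$ has all invasion rates zero, does force $\sum_j w_j(\pi)s_j=0$, and when $K\le S$ this yields a common first integral $\Phi=\sum_i v_i\log X^i$ with $v\neq 0$, $\sum_i v_i=0$. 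But your completion goes astray exactly where you declare it delicate. Under the paper's Definition 1 (Schreiber's), the compact set $K_\epsilon$ must be \emph{uniform in the initial condition}. Since $v$ has entries of both signs, $\Phi$ is unbounded on $M_+$ yet bounded on every compact $K_\epsilon\subset M_+$; starting from any $u\in M_+$ with $\Phi(u)>\sup_{K_\epsilon}\Phi$, conservation of $\Phi$ along every flow $G_{s_j}$ gives $\Pi_t^u(K_\epsilon)=0$ for all $t$, so persistence fails outright. No detour through absolute continuity or H-persistence is needed, and no ``measure-zero set of fitness matrices'' has to be excluded: the first integral \emph{is} the contradiction. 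Relatedly, your claimed weakly-persistent example is not a counterexample: it uses the forbidden fitness value $-1\notin(-1,\infty)$, and even after fixing that, the leaves with extreme values of the conserved quantity remain within an arbitrarily small distance of $M_0$ for all time, violating the uniform definition. So the ``only if'' direction is correct in outline, your own assessment of the remaining difficulty is wrong, and once repaired as above this half is actually a complete proof — which the paper does not possess.

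The ``if'' direction is the genuine gap, and your sketch does not close it; worse, the recipe it proposes fails already in the base case. For $S=1$, $K=2$, ``vertices of a simplex centred at the origin with symmetric rates'' means $s_2=-s_1$ and $p_1=p_2$, which gives $\Lambda_0=\sum_j p_js_j=0$: this is precisely the degenerate, \emph{non-persistent} case (4) of Proposition \ref{prop:oneD}, illustrated in the paper's second example of Section 4. The reason is structural: persistence requires $0$ to lie suitably inside the hull with respect to the \emph{nonlinear} weights (e.g. $\sum_j p_js_j>0$ and $\sum_j p_js_j/(1+s_j)<0$ simultaneously), and likewise $\Lambda_{i_0}=\max_{i\neq i_0}\sum_j p_j\frac{s_j^i-s_j^{i_0}}{1+s_j^{i_0}}$ carries $j$-dependent denominators, so centring with respect to the $p_j$ gives no sign control and Theorem \ref{thm:suffinvade} cannot be invoked. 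Moreover, your inductive step needs the sign of the invasion rate of the missing species against every ergodic measure carried by a face; once a face contains three or more species these measures solve the system \eqref{eq:fokker2} and have no closed form — exactly the obstruction that forced the authors to compute $\lambda_i(\nu_i)$ numerically even in the smallest case $S+1=K=3$ — and the Farkas/linear-programming assembly of the weights $c^i$ in Theorem \ref{thm:persistence} is asserted, not performed. As it stands, the sufficiency half is a program, not a proof: combined with the repaired necessity argument, you would have exactly one half of the conjecture.
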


\section{Appendix}
 
\subsection{Proof of Theorem \ref{thqttapprox}}
Let $(U_n^J )_{n \geq 0}= (X_n^J, s_n^J)_{n \geq 0}$ be the Moran model described in section \ref{presentation}, $(U_t)_{t \geq 0} = (X_t, s_t)_{t \geq 0}$ the PDMP given by equation \ref{eqprincipalepdmp} and $(Z_t^J)_{t \geq 0} = (X_{t/J}, s_{t/J})_{t \geq 0}$ the PDMP in the time scale of the discrete process, whose generator is given by $L$. 

Our goal is to quantify the difference
\[
\mathbb{E}_{x,s} \left( f(U_t) - f( \tilde{U}^J_t) \right) 
\]
for $f : \mathscr{I} \to \mathbb{R}$ regular enough. First note that, since $U_t = Z_{tJ}^J$,  we can rewrite this difference as 
\[
\mathbb{E}_{x,s} \left( f(Z_{tJ}^J) - f(Z_{ \lfloor t J \rfloor}^J) \right) +  \mathbb{E}_{x,s} \left( f(Z^J_{ \lfloor t J \rfloor}) - f(U^J_{\lfloor t J \rfloor} \right) + \mathbb{E}_{x,s} \left( f(U^J_{\lfloor t J \rfloor} - f(\tilde{U}^J_t) \right)  . 
\]
We first show that the first and last term of the above quantity can easily be controlled. For all $t \geq 0$ and $J \geq 1$, one has $ 0 \leq t - \frac{\lfloor t J \rfloor}{J} < \frac{1}{J}$, hence $\frac{\lfloor t J \rfloor}{J}$ converges to $t$. The probability that $(s_u)_{u \geq 0}$ perfomes a jump at time $t$ is zero, hence, almost surely, for $J$ big enough, $s_\frac{\lfloor t J \rfloor}{J} = s_t$. In particular, if $f$ is $C^1$, one has
\[
\left| f(Z_{tJ}) - f(Z_{\lfloor t J \rfloor}) \right| \leq \|f^{(1)}\|  \left| X_{t} - X_{\frac{\lfloor t J \rfloor}{J}} \right|,
\]
where $\|f^{(1)}\| = \max_{(x,s) \in \mathscr{I}} | f'_s(x) |$. Let $C_G = \max_{(x,s) \in \mathscr{I}} |G_s(x)|$, then
\begin{align*}
    \left| X_{t} - X_{\frac{\lfloor t J \rfloor}{J}} \right| & = \left| \int_{\frac{\lfloor t J \rfloor}{J}}^t G_{s_u}(X_u) du \right|\\
    & \leq \left( t - \frac{\lfloor t J \rfloor}{J} \right) C_G \leq \frac{C_G}{J},
\end{align*}
which implies 
\[
\left| f(Z_{tJ}) - f(Z_{\lfloor t J \rfloor}) \right| \leq \|f^{(1)}\| \frac{C_G}{J}.
\]
Furthermore, $\tilde{U}_t^J=(\tilde{X}_t^J, \tilde s_t^J)$ and by definition, $\tilde s_t^J = s_{ \lfloor t J \rfloor}^J$ and
\begin{align*}
    \left| \tilde{X}_t^J - X_{ \lfloor t J \rfloor}^J \right| & = J \left( t - \frac{\lfloor t J \rfloor}{J} \right) \left|X_{ \lfloor t J \rfloor + 1}^J - X_{ \lfloor t J \rfloor}^J  \right|\\
    & \leq \left| X_{ \lfloor t J \rfloor + 1}^J - X_{ \lfloor t J \rfloor}^J  \right|.
\end{align*}
Thus,
\[
\left| \mathbb{E}_{x,s} \left( f(U^J_{\lfloor t J \rfloor} - f(\tilde{U}^J_t) \right)\right| \leq \|f^{(1)}\|\mathbb{E}_{x,s} \left| X_{ \lfloor t J \rfloor + 1}^J - X_{ \lfloor t J \rfloor}^J\right| \leq \frac{\|f^{(1)}\|}{J} 
\]
because the difference between $X^J_{n+1}$ and $X^J_n$ is at most $1/J$. 
Therefore, one can look at the difference 
\[
\mathbb{E}_{x,s} \left( f(Z_{\lfloor t J \rfloor}^J) - f(U_{ \lfloor t J \rfloor}^J) \right) 
\]
because 
\[
\left| \mathbb{E}_{x,s} \left( f(U_t) - f(U_{ \lfloor t J \rfloor}^J) \right) \right| \leq  \|f^{(1)}\|\frac{C_G+1}{J} + \mathbb{E}_{x,s} \left( f(Z_{\lfloor t J \rfloor}^J) - f(U_{ \lfloor t J \rfloor}^J) \right).
\]
 We  set $T_u^J f(x,s) = \mathbb{E}_{x,s}\left( f(Z_t^J) \right)$ and $S_k^J f(x,s) = \mathbb{E}_{x,s}\left( f(X_n^J) \right)$. When there is no ambiguity, we shall drop the exponent $J$ on $T^J$ and $S^J$. With these notations, our aim is to control $\| T_n f - S_n f \|_J$, where $n =\lfloor t J \rfloor $.For this, we can use the following inequality, proved  in  \cite{ggp2018} : 

\begin{align}
\Vert S_nf -T_nf \Vert_J \leq  \sum_{k=0}^{n-1} \Vert (S_1-T_1)T_kf \Vert_J \label{eq:Sn-Tn}
\end{align}
Thus we are reduced to the study of $\Vert (S_1-T_1)g \Vert_J$ for $g$ regular enough. We start by the following lemma :

\begin{lemma}
\label{lem:S1-T1}
There exists $\gamma_0, \gamma_1, \gamma_2 > 0$, such that for all $g \in C^3( \mathscr{I})$, 
\[
\|(S_1 - T_1)g\|_J \leq \gamma_0 \|g\|_J + \gamma_1 \frac{ \|g^{(1)}\|_J}{J^2} + \gamma_2 \frac{ \|g^{(2)}\|_J}{ J^2} + 2 \sum_{s ' \neq s} \left| P_{s,s'}^J - \frac{\alpha_s Q_{s,s'}}{J} \right| \|g\|_J + O( \frac{1}{J^3}).
\]
\end{lemma}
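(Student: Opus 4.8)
The plan is to Taylor-expand both one-step operators $S_1$ and $T_1$ to second order and to compare them term by term, grouping the difference according to the order of the $x$-derivative of $g$ that appears. First I would expand $S_1$. Since the environment evolves autonomously, the updates of $X$ and of $s$ are conditionally independent given $(x,s)$, so that $S_1 g(x,s) = \sum_{s'} P^J_{s,s'}\, \mathbb{E}_{x,s}[g(X_1^J,s')]$, where $X_1^J = x \pm \tfrac1J$ with probabilities $P_{x\pm}$ given in Section \ref{presentation}, or $X_1^J = x$ otherwise. A third-order Taylor expansion of $x \mapsto g(x,s')$ gives
\[
\mathbb{E}_{x,s}[g(X_1^J,s')] = g(x,s') + \frac{1}{J} G_s(x)\, \partial_x g(x,s') + \frac{\sigma_s(x)}{2J^2}\, \partial_x^2 g(x,s') + O\!\left(\frac{\|g^{(3)}\|_J}{J^3}\right),
\]
using that the mean increment equals $\tfrac1J G_s(x)$ exactly and that the second-moment coefficient $\sigma_s(x) = P_{x+}+P_{x-}$ is bounded on $\mathscr{I}$. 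Summing against $P^J_{s,\cdot}$, and using $\sum_{s'}P^J_{s,s'}=1$ together with $P^J_{s,s'} = O(1/J)$ for $s'\neq s$ from Assumption \ref{hyp:limitMC}, isolates the genuinely $O(1/J^2)$ contributions.

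Next I would expand the PDMP semigroup. Writing $L = \tfrac1J\,\mathcal{G}$ with $\mathcal{G}g = G_s\, \partial_x g + \sum_{s'}\alpha_s Q_{s,s'}\big(g(\cdot,s')-g(\cdot,s)\big)$, Taylor's formula with integral remainder for $u\mapsto T_u g$ gives $T_1 g = g + Lg + \tfrac12 L^2 g + \tfrac12\int_0^1 (1-u)^2 L^3 T_u g\,du$, hence $T_1 g = g + \tfrac1J\mathcal{G}g + \tfrac1{2J^2}\mathcal{G}^2 g + O(1/J^3)$. I would then expand $\mathcal{G}^2 g$ and sort its terms by derivative order: a leading $G_s^2\, \partial_x^2 g$ piece, several $\partial_x g$ pieces coming from $G_s G_s'$ and from jump--drift cross terms, and a pure jump--jump piece involving $g$ with no $x$-derivative.

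Subtracting, I would group $S_1 g - T_1 g$ into four families. The order-$1/J$ drift terms $\tfrac1J G_s\, \partial_x g(\cdot,s)$ coincide and cancel. The terms carrying no $x$-derivative of $g$ split as $\sum_{s'\neq s}\big(P^J_{s,s'}-\tfrac{\alpha_s Q_{s,s'}}{J}\big)\big(g(\cdot,s')-g(\cdot,s)\big)$, bounded by $2\sum_{s'\neq s}\big|P^J_{s,s'}-\tfrac{\alpha_s Q_{s,s'}}{J}\big|\,\|g\|_J$, plus the residual of the two second-order jump--jump parts, which is controlled by $\gamma_0\|g\|_J$ (and is in fact $O(J^{-2})$). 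The first-derivative residual, produced on both sides by drift$\times$jump cross terms (on the $S_1$ side it reads $\tfrac1J G_s\sum_{s'\neq s}P^J_{s,s'}\big(\partial_x g(\cdot,s')-\partial_x g(\cdot,s)\big)$), is $O\!\big(\tfrac{\|g^{(1)}\|_J}{J^2}\big)$, giving the $\gamma_1$ term. Finally the second-derivative residual equals $\tfrac1{2J^2}\big(\sigma_s - G_s^2\big)\partial_x^2 g(\cdot,s)$; it does not vanish, since the discrete second-moment coefficient $\sigma_s$ differs from $G_s^2$, but it is $O\!\big(\tfrac{\|g^{(2)}\|_J}{J^2}\big)$, giving the $\gamma_2$ term. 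Collecting everything and absorbing the leftover Taylor remainders into $O(1/J^3)$ yields the claimed inequality.

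The main obstacle is the rigorous control of the remainder in the semigroup expansion of $T_1$: one must bound $\|\mathcal{G}^3 T_u g\|_J$ uniformly for $u\in[0,1]$, which requires that $T_u$ maps $C^3$ into $C^3$ with norms controlled in terms of the $C^3$ norms of $g$. This follows from the smoothness of the flow generated by the $G_s$ and of its derivatives with respect to the initial condition, but it is the delicate analytic point. A secondary, purely bookkeeping difficulty is to check that every cross term (jump$\times$drift and jump$\times$variance) is correctly classified as $O(1/J^2)$ or absorbed into $O(1/J^3)$; this classification rests precisely on the estimate $P^J_{s,s'}=O(1/J)$ for $s'\neq s$ provided by Assumption \ref{hyp:limitMC}.
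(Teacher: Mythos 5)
Your proof is correct and follows essentially the same route as the paper's: both expand $T_1 g = g + Lg + \tfrac{1}{2}L^2 g + O(1/J^3)$ via the generator of the PDMP, expand $S_1 g$ by conditioning on the environment transition and Taylor-expanding in $x$ with the exact first moment $\tfrac{1}{J}G_s(x)$ of the discrete increment, and isolate the environment error term $\sum_{s'\neq s}\bigl|P^J_{s,s'}-\alpha_s Q_{s,s'}/J\bigr|\,\|g\|_J$. The only differences are bookkeeping: you match the second-order terms explicitly (noting the $\sigma_s$ versus $G_s^2$ discrepancy), whereas the paper bounds $\tfrac{1}{2}\|L^2 g\|_J$ wholesale by $\tfrac{1}{J^2}\bigl(\gamma_0'\|g\|_J+\gamma_1'\|g^{(1)}\|_J+\gamma_2'\|g^{(2)}\|_J\bigr)$, since no cancellation is needed at order $1/J^2$; likewise your worry about bounding $L^3 T_u g$ in the integral remainder can be dispatched by commuting it to $T_u L^3 g$ and using that $\mathscr{L}^3 g$ is bounded for $g \in C^3(\mathscr{I})$.
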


\begin{proof}
We first show that for $g$ which is $C^3$,
\[
T_1 g (x,s) = g(x,s) + L g(x,s) + \frac{1}{2}L^2 g(x,s) + O(\frac{1}{J^3})
\]
Let $(\mathscr{P}_t)_{t \geq 0}$ be the semigroup associated with the PDMP $(U_t)_{t \geq 0}$ : $Q_t g(x,s) = \mathbb{E}_{x,s}(g(U_t))$ and $\mathscr{L}$ its generator :
\[
\mathscr{L} g(x,s) = \frac{s x (1-x) }{(1+sx)} \frac{\partial}{\partial x} g(x,s) + \sum_{s' \in E} \alpha_s Q_{s,s'} \left( f(x,s') - f(x,s) \right).
\]
Then, one has $T_t = \mathscr{P}_{t/J}$ and $L = \frac{1}{J} \mathscr{L}$. It is well known that for $g$ in the domain of $\mathscr{L}$,
\[
\frac{\partial }{\partial t} \mathscr{P}_t g(x,s) = \mathscr{P}_t \mathscr{L} g = \mathscr{L} \mathscr{P}_t g
\]
In particular, if $g$ and $\mathscr{L} g$ are in the domain of $\mathscr{L}$, then $\mathscr{P}_t g$ is twice differential with respect to time and 
\[
\frac{\partial^2 }{\partial t^2} \mathscr{P}_t g(x,s) = \mathscr{P}_t \mathscr{L}^2 g = \mathscr{L}^2 \mathscr{P}_t g,
\]
 where $\mathscr{L}^2 g = \mathscr{L}( \mathscr{L}g) $. In the case of PDMP, it is possible to prove that when $g$ is $C^3$, then both $g$ and $\mathscr{L}g$ are in $C^3$. 
 
 Thus, we obtain the following Taylor development :
 \begin{align*}
T_1 g(x,s) & = \mathscr{P}_{t/J} g(x,s)\\
& = g(x,s) + \frac{1}{J}\mathscr{L} g(x,s) +  \frac{1}{2J^2}\mathscr{L^2} g(x,s) + O(\frac{1}{J^3})\\
& =  g(x,s) + L g(x,s) + \frac{1}{2}L^2 g(x,s) + O(\frac{1}{J^3}).
\end{align*}
 Hence, 
 \[
 S_1 g - T_1 g = S_1 g - g - Lg - \frac{1}{2}L^2 g + O(\frac{1}{J^3}).
 \]
 Moreover, we have
\begin{align*}
S_1 g(x,s) & = \mathbb{E}_{x,s} \left( g(X_1^J, s_1^J) \right)\\
& = P_{s,s}^J \mathbb{E}_{x,s} \left( g(X_1^J, s) \right) + \sum_{s' \neq s} P^J_{s,s'} \mathbb{E}_{x,s} \left( g(X^J_1, s') \right),
\end{align*}
which leads to (we drop the $J$ in $P_{s,s'}$ for better readability)

\begin{align*}
S_1 g(x,s) - T_1 g(x,s) & = \mathbb{E}_{x,s} \left( g(X_1^J, s) \right)  - g(x,s) - L_C g(x,s) \\
& \quad (P_{s,s'} - 1) \left( \mathbb{E}_{x,s} (g(X_1^J,s)) - g(x,s)\right) \\
& \quad + \sum_{s' \neq s} P_{s,s'} \left( \mathbb{E}_{x,s} (g(X_1^J,s')) - g(x,s')   \right) \\ 
& \quad + \sum_{s' \neq s}\left( P_{s,s'} - \frac{\alpha_s Q_{s,s'}}{J} \right)\left( g(x,s') - g(x,s) \right) \\
& \quad - \frac{1}{2}L^2 g(x,s) + O(\frac{1}{J^3}).
\end{align*}
We now prove that the three first terms are of order $1/J^2$, with bounds controllable by the derivative of $g$. Note that by definition of $L^2$, it is immediate that there exist some constants $\gamma_0'$, $\gamma_1'$ and $\gamma_2'$ such that
\[
\|L^2g\|_J \leq \gamma_0' \frac{\|g\|_J}{J^2} + \gamma_1' \frac{\|g^{(1)}\|_J}{J^2} + \gamma_2' \frac{\|g^{(2)}\|_J}{J^2} .
\]
Since in one step, the difference $X_1^J - x$ is of order $1/J$, we have the following Taylor development :

\[
\mathbb{E}_{x,s}\left(g( X_1^J, s')\right) =g(x,s') + \frac{\partial g}{\partial x}(x,s') \mathbb{E}_{x,s}[X_1^J - x] + \frac{\partial^2 g}{\partial x^2}(x,s') \frac{1}{2} \mathbb{E}_{x,s}\left[(X_1^J - x)^2\right] + O(\frac{1}{J^3})
\]
 By Proposition \ref{hyp:limitMC} and definition of $L_C$, we thus have 
 \[
\left| \mathbb{E}_{x,s}\left(g( X_1^J, s)\right) - g(x,s) - L_c g(x,s)\right|  \leq \frac{\|g^{(2)}\|}{2J^2} + O(\frac{1}{J^3}).
\]
The previous Taylor development also gives that 
\[
\left| \mathbb{E}_{x,s}\left(g( X_1^J, s')\right) - g(x,s') \right| \leq \frac{\|g^{(1)}\|}{J} + O(\frac{1}{J^2})
\]
Now, since for all $s' \neq s$, $\lim_{J \to \infty} P^J_{s,s'} J = \alpha_s Q_{s,s'}$, we have $P_{s,s'} = O(1/J)$, which concludes the proof.
\end{proof}
Let $f \in C^3( \mathscr{I})$. In view of inequality \ref{eq:Sn-Tn}, we want to apply Lemma \ref{lem:S1-T1} to $T_k f$, for $0 \leq k \leq n-1$. For this, we need to prove that $T_t f$ is regular for all $t \geq 0$, and to give an estimate on the derivative of $T_t f$ :

\begin{lemma}
\label{lem:controlPt}
Let $C = \max_{(x,s) \in \mathscr{I}}| \frac{\partial G_s}{\partial x}(x)|$ and $K = \max_{(x,s) \in \mathscr{I}}| D^2G_{s}(x)|$. Then  for all $f \in C^2(\mathscr{I})$, for all $t \geq 0$,
\[
\|( \mathscr{P}_t f)^{(1)}\| \leq \exp(Ct)  \|f^{(1)}\|,
\]
and
\[
\|( \mathscr{P}_t f)^{(2)}\| \leq \exp(2Ct)  \left( \|f^{(1)}\| + K \|f^{(2)}\| \right).
\]
In particular, 
\[
\|( T_t f)^{(1)}\| \leq \exp(C\frac{t}{J})  \|f^{(1)}\|,
\]
and
\[
\|( T_t f)^{(2)}\| \leq \exp(2C\frac{t}{J})  \left( \|f^{(1)}\| + K \|f^{(2)}\| \right).
\]
\end{lemma}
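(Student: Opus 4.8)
The plan is to condition on the trajectory of the environment and reduce everything to pathwise estimates on the derivatives of the flow with respect to the initial condition. Since $(s_t)_{t \geq 0}$ is autonomous, for a fixed realization of the jump path $(s_u)_{0 \leq u \leq t}$ the component $X_t$ is a \emph{deterministic} function of the starting point: writing $\phi_t(x)$ for the solution at time $t$ of the time-dependent ODE $\dot y = G_{s_u}(y)$ with $y(0)=x$, one has $X_t = \phi_t(x)$ and hence
\[
\mathscr{P}_t f(x,s) = \mathbb{E}_{x,s}\big[ f(\phi_t(x), s_t) \big],
\]
the expectation bearing only on the environment, which does not depend on $x$. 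Because $G_s$ is smooth on the state space $\mathscr{I}$ (the denominators stay bounded away from $0$ there), $\phi_t$ is a finite composition of smooth flows, one per inter-jump interval, so it is $C^2$ in $x$; one may therefore differentiate under the expectation.

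First I would treat the gradient. Setting $\psi_t := \partial_x \phi_t(x)$, differentiation gives
\[
\partial_x \mathscr{P}_t f(x,s) = \mathbb{E}_{x,s}\big[ f'_{s_t}(\phi_t(x))\, \psi_t \big].
\]
The first variational equation is $\dot\psi_t = \partial_x G_{s_t}(\phi_t(x))\, \psi_t$ with $\psi_0 = 1$, so $\psi_t = \exp\big( \int_0^t \partial_x G_{s_u}(\phi_u(x))\, du \big)$, and since $|\partial_x G_s| \leq C$ everywhere we obtain the pathwise bound $|\psi_t| \leq e^{Ct}$. Inserting this into the display yields directly $\|(\mathscr{P}_t f)^{(1)}\| \leq e^{Ct}\|f^{(1)}\|$.

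For the Hessian, set $\xi_t := \partial_x^2 \phi_t(x)$; a second differentiation gives
\[
\partial_x^2 \mathscr{P}_t f(x,s) = \mathbb{E}_{x,s}\big[ f''_{s_t}(\phi_t(x))\, \psi_t^2 + f'_{s_t}(\phi_t(x))\, \xi_t \big].
\]
Here $\xi_t$ solves the second variational equation $\dot\xi_t = \partial_x G_{s_t}(\phi_t(x))\, \xi_t + D^2 G_{s_t}(\phi_t(x))\, \psi_t^2$ with $\xi_0 = 0$. Solving by the integrating factor $\exp\big(-\int_0^t \partial_x G_{s_u}(\phi_u(x))\, du\big)$ and using $|D^2 G_s| \leq K$ together with $|\psi_t| \leq e^{Ct}$, a Grönwall/integrating-factor estimate gives a pathwise bound $\sup_x |\xi_t| \leq K\, e^{2Ct}$ (modulo the precise Grönwall constant). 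Bounding the two terms of the display separately by $\|f^{(2)}\|\, e^{2Ct}$ and $\|f^{(1)}\|\sup_x|\xi_t|$ and collecting terms then produces the announced estimate $\|(\mathscr{P}_t f)^{(2)}\| \leq e^{2Ct}\big(\|f^{(1)}\| + K\|f^{(2)}\|\big)$. Finally, the two statements for $T_t$ follow at once from the identity $T_t = \mathscr{P}_{t/J}$ recorded before the lemma, by substituting $t \mapsto t/J$.

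The genuinely delicate point is the very first reduction: establishing the uniform $C^2$-regularity of the random flow $\phi_t$ and rigorously justifying the interchange of $x$-differentiation with the expectation over the jump path (one must handle the random jump times and check that the a.s.\ finite number of jumps on $[0,t]$ causes no integrability problem). Here the deterministic, pathwise bounds $|\psi_t| \leq e^{Ct}$ and $|\xi_t| \leq K e^{2Ct}$ are what make the dominated-convergence argument painless, since the integrands are bounded by a constant independent of the environment. By contrast, the two variational equations and their Grönwall estimates are routine, and the only remaining effort is the bookkeeping of the constants $C$ and $K$.
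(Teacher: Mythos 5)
Your proof follows essentially the same route as the paper's: conditioning on the (autonomous) jump path so that $X_t^x$ is a composition of smooth deterministic flows, bounding $\partial_x X_t^x$ and $\partial_x^2 X_t^x$ via the first and second variational equations with Grönwall-type estimates ($e^{Ct}$ and $Ke^{2Ct}$ respectively), applying the chain rule to $f(X_t^x,s_t)$, taking expectations, and deducing the $T_t$ bounds from $T_t = \mathscr{P}_{t/J}$. The differences are cosmetic (explicit exponential/integrating-factor formulas versus Grönwall on the integral equations), and your constant bookkeeping is no looser than the paper's own.
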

\begin{proof}
For $s \in E$, we denote by $\varphi_t^s(x)$ the flow generated by $G_s$ and started at $x$. That is, $\varphi_t^s(x)$ is solution to
\[
\begin{cases}
\frac{\partial \varphi_t^s(x)}{\partial t} = G_s(\varphi_t^s(x))\\
\varphi_0^s(x) = x.
\end{cases}
\]
By $C^{\infty}$ regularity of $G_s$, for every $t \geq 0$, $x \mapsto \varphi_t^s(x)$ is $C^{\infty}$.
Let $(\tilde{S}_n)_{n \geq 0}$ and $(\tilde{T}_n)_{n \geq 0}$ denote the sequence of postjump locations and of jump times of $(s_t)_{t \geq 0}$, respectively. Under $\mathbb{P}_{x,s}$, one has
\[
X_t^x = \varphi_{t - \tilde{T}_{N_t}}^{\tilde{S}_{N_t}} \circ \ldots \circ \varphi_{\tilde{T}_1}^s(x),
\]
where we have denoted $X_t^x$ the process $X_t$ to emphasis the dependence on $x$. From this equation, since the sequences $(\tilde{S}_n)_{n \geq 0}$ and $(\tilde{T}_n)_{n \geq 0}$ do not depend on $x$ and since every $\varphi^s$ is $C^{\infty}$, one deduce that $x \mapsto X_t^x$ is $C^{\infty}$ almost surely. We can also write
\[
X_t^x = x + \int_0^t G_{s_u}(X_u^x)du,
\]
that we can differentiate  with respect to $x$ to find 
\[
\frac{\partial}{\partial x} X_t^x = 1 + \int_0^t DG_{s_u}(X_u^x) \frac{\partial}{\partial x} X_u^x du.
\]
Using Gronwall's inequality, we deduce that 
\[
\left| \frac{\partial}{\partial x} X_t^x \right|  \leq e^{Ct}.
\]
Differentiating once again, we find 
\[
\frac{\partial^2}{\partial x^2} X_t^x = \int_0^t D^2G_{s_u}(X_u^x) \left( \frac{\partial}{\partial x} X_u^x \right)^2 du + \int_0^t DG_{s_u}(X_u^x) \frac{\partial^2}{\partial x^2} X_u^x du,
\]
which leads to
\[
\left| \frac{\partial^2}{\partial x^2} X_t^x \right| \leq K (e^{Ct} - 1) + C \int_0^t \left| \frac{\partial^2}{\partial x^2} X_u^x \right| du.
\]
Using once again Gronwall's inequality, we deduce that 
\[
\left| \frac{\partial^2}{\partial x^2} X_t^x \right| \leq K (e^{Ct} - 1) e^{Ct} \leq K e^{2Ct}.
\]
Now let $f \in C^2(\mathscr{I})$, then
\[
\frac{\partial}{\partial x} f(X_t^x, s_t) = \left( \frac{\partial}{\partial x} f \right)( X_t^x, s_t)\frac{\partial}{\partial x} X_t^x, 
\]
and thus 
\[ 
\left| \frac{\partial}{\partial x} f(X_t^x, s_t) \right| \leq \|f^{(1)} \| e^{Ct}.
\]
Furthermore, 
\[
\frac{\partial^2}{\partial x^2} f(X_t^x, s_t) = \left(\frac{\partial^2}{\partial x^2}f\right)(X_t^x,s_t) \left( \frac{\partial}{\partial x} X_t^x \right)^2 + \left( \frac{\partial}{\partial x} f \right)( X_t^x, s_t)\frac{\partial^2}{\partial x^2} X_t^x, 
\]
hence
\[
\left| \frac{\partial^2}{\partial x^2} f(X_t^x, s_t) \right| \leq e^{2Ct} \left( \|f^{(2)} \| + K \|f^{(1)} \| \right) 
\]

\end{proof}
We can now finish the proof of Theorem \ref{thqttapprox}. Using inequality \eqref{eq:Sn-Tn} and Lemma \ref{lem:S1-T1} applied to $g = T_k f$, we have :

\begin{align*}
    \|S_n f - T_n f \| &  \leq  \sum_{k=0}^{n-1} \|(S_1 - T_1) T_kf\|\\
    & \leq \frac{1}{J^2}\sum_{k=0}^{n-1}\left( \gamma_0 \|T_kf\| + \gamma_1 \|T_k^{(1)}f\| + \gamma_2 \|T_k^{(2)}f\| \right)\\
    & \quad +  \sum_{k=0}^{n-1}\left[ 2 \sum_{s' \neq s} \left| P_{s,s'}^J - \frac{\alpha_sQ_{s,s'}}{J}\right|\|T_kf\| + O(\frac{1}{J^3}) \right].
\end{align*}
Now, since $n = \lfloor tJ \rfloor$, we get that $ \sum_{k=0}^{n-1} O(\frac{1}{J^3}) = O(\frac{1}{J^2})$. Moreover, by Lemma \ref{lem:controlPt}, we have
\[
\sum_{k=0}^{n-1} \|(T_kf)^{(1)}\| \leq  \|f^{(1)}\| \sum_{k=0}^{n-1} e^{k \frac{C}{J}}=\|f^{(1)}\| \frac{e^{n\frac{C}{J}} - 1}{e^{\frac{C}{J}}-1}
\]
Since $e^{\frac{C}{J}}-1 \geq \frac{C}{J}$ and $n = \lfloor tJ \rfloor \leq t J$, we get 
\[
\sum_{k=0}^{n-1} \|(T_kf)^{(1)}\| \leq  \|f^{(1)}\|J \frac{e^{tC}-1}{C}.
\]
For the same reason, we get that
\[
\sum_{k=0}^{n-1} \|(T_kf)^{(2)}\| \leq  \left(\|f^{(1)}\| + K \|f^{(2)}\|\right)J \frac{e^{2tC}-1}{2C},
\]
hence
\begin{align*}
    \frac{1}{J^2}\sum_{k=0}^{n-1}( \gamma_0 \|T_kf\| + & \gamma_1 \|T_k^{(1)}f\| + \gamma_2 \|T_k^{(2)}f\|)   \\
    & \leq \frac{1}{J}\left( t \gamma_0 \|f\| + \gamma_1 \frac{e^{t2C}-1}{C}\|f^{(1)}\| + \gamma_2 K \frac{e^{2tC}-1}{2C} \|f^{(2)}\|\right)
\end{align*}
This, together with the fact that
\[
\sum_{k=0}^{n-1}  \sum_{s' \neq s} \left| P_{s,s'}^J - \frac{\alpha_sQ_{s,s'}}{J}\right|\|T_kf\| \leq \left| J P_{s,s'}^J - \alpha_sQ_{s,s'}\right|\|f\|
\]
conclude the proof of Theorem \ref{thqttapprox}.

\subsection{A lemma for accessibility}
This section is devoted to the statement and the proof of a lemma which is useful for accessibility issue. Let $E$ be a finite set, and for all $i \in E$, let $f_i : \mathbb{R}^d \to \mathbb{R}^d $ be a globally integrable vector field. For $i \in E$, we denote by $(\varphi_t^i(x))_{t \geq 0}$ the flow generated by $f_i$ and started at $x$. That is, $\varphi_t^i(x)$ is solution to
\[
\begin{cases}
\frac{\partial \varphi_t^i(x)}{\partial t} = f_i(\varphi_t^s(x))\\
\varphi_0^i(x) = x.
\end{cases}
\]
 For $m \geq 1$,  $\mathbf{i}=(i_1, \ldots, i_m) \in \{1,2,3\}^m$ and $\mathbf{u} = (u_1,\ldots,u_m) \in \mathbb{R}_+^m$, we denote by $\mathbf{\Phi}_{\mathbf{u}}^{\mathbf{i}}$ the composite flow: $\mathbf{\Phi}_{\mathbf{u}}^{\mathbf{i}} = \varphi_{u_m}^{i_m} \circ \ldots \circ \varphi_{u_1}^{i_1}$.  For $x \in \mathbb{R}^d$ and $t \geq 0$, we denote by $\mathcal{O}^+_{f,t}(x)$ (resp. $\mathcal{O}^+_f(x)$)  the set of points that are reachable from $x$ at time $t$ (resp. at any nonnegative time) with a composite flow: $$\mathcal{O}^+_{f,t}(x)=\{ \mathbf{\Phi}_{\mathbf{v}}^{\mathbf{i}} (x), \: (\mathbf{i},\mathbf{v}) \in E^m \times \mathbb{R}_+^m, m \in \mathbb{N}, v_1 + \ldots + v_m = t\},$$

$$ 
\mathcal{O}^+_f(x) = \bigcup_{t \geq 0} \mathcal{O}^+_{f,t}(x).
$$ 
For $B \subset \mathbb{R}^d$, We define the set of points that are accessible from $B$ (with the vector fields $(f_i)_{i \in E}$) as 
\[
\Gamma_{f,B} = \bigcap_{ x \in  B} \overline{\mathcal{O}_f^+(x)}.
\]
These accessible sets are linked to the support of invariant probabiliy measure of PMDP, see e.g. \cite[Proposition 3.17]{BMZIHP}. With these notations, we have the following lemma :

\begin{lemma}
\label{lem:accessconjug}
Let $(f_i)_{i \in E}$ and $(g_i)_{i \in E}$ be two families of globally integrable vector fields on $\mathbb{R}^d$. Assume that for all $i$, there exists a positive, Lipschitzian,  bounded function $h_i : \mathbb{R}^d \to \mathbb{R}$ such that $g_i = h_i f_i$. Then, for all $x \in \mathbb{R}^d$, one has 
\[ 
\mathcal{O}^+_f(x) = \mathcal{O}^+_g(x).
\]
In particular, for all $B \subset \mathbb{R}^d$
\[
\Gamma_{f,B} = \Gamma_{g,B}.
\]
\end{lemma}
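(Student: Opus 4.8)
The plan is to exploit the elementary geometric fact that multiplying a vector field by a positive scalar field changes the speed at which its trajectories are traversed but leaves the trajectories themselves unchanged. Concretely, fix an index $i$, write $\varphi^i$ for the flow of $f_i$ and $\psi^i$ for the flow of $g_i = h_i f_i$ (this flow is well defined and complete because $h_i$ is positive, bounded and Lipschitz, so $g_i$ is locally Lipschitz and grows no faster than $f_i$). First I would prove the single-field statement
\[
\{\psi^i_t(x) : t \geq 0 \} = \{\varphi^i_s(x) : s \geq 0 \} \qquad \text{for every } x \text{ and every } i
\]
by an explicit time change. Letting $\sigma$ be the solution of the scalar ODE $\sigma'(t) = h_i(\varphi^i_{\sigma(t)}(x))$ with $\sigma(0) = 0$ (which exists globally and is unique since $h_i$ is bounded and Lipschitz), a direct differentiation shows that $y(t) := \varphi^i_{\sigma(t)}(x)$ satisfies $\dot y = h_i(y) f_i(y) = g_i(y)$ and $y(0) = x$, so by uniqueness $\psi^i_t(x) = \varphi^i_{\sigma(t)}(x)$.

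The key point, and the one place where boundedness of $h_i$ is genuinely used, is to check that $\sigma$ is a continuous strictly increasing bijection of $[0,\infty)$ onto $[0,\infty)$. Strict monotonicity and continuity are immediate from $h_i > 0$. For surjectivity I would pass to the inverse time change $t(s) = \int_0^s \frac{dr}{h_i(\varphi^i_r(x))}$: since $h_i$ is continuous and strictly positive, $t(s)$ is finite for every finite $s$, so every point $\varphi^i_s(x)$ of the $f_i$-orbit is reached by $\psi^i$ at the finite time $t(s)$; and since $h_i \leq M$ for some $M$, one has $1/h_i \geq 1/M$ and hence $t(s) \to \infty$ as $s \to \infty$, i.e. $\sigma(\infty) = \infty$. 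This last estimate is exactly what prevents the $g_i$-orbit from stopping short of the full $f_i$-orbit, and it is the main (if mild) obstacle of the proof: without an upper bound on $h_i$ the time change could compress the whole $f_i$-orbit into a finite $g_i$-time, and the two orbits would then genuinely differ. Because $\sigma$ maps $[0,\infty)$ onto $[0,\infty)$, the two positive orbits coincide.

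It then remains to lift this orbit equality to the composite reachable sets. Each composite $g$-flow $\psi^{i_m}_{t_m}\circ\cdots\circ\psi^{i_1}_{t_1}(x)$ is built leg by leg, and by the single-field equality the endpoint of each leg lies on the corresponding $f_{i_\ell}$-orbit of the previous point; combined with the transitivity of accessibility (if $z \in \mathcal{O}^+_f(x)$ then $\mathcal{O}^+_f(z) \subseteq \mathcal{O}^+_f(x)$, obtained simply by concatenating composite flows), an induction on the number of legs $m$ gives $\mathcal{O}^+_g(x) \subseteq \mathcal{O}^+_f(x)$. The reverse inclusion follows from the very same argument, since the orbit equality established above is symmetric in $f$ and $g$. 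Hence $\mathcal{O}^+_f(x) = \mathcal{O}^+_g(x)$ for every $x$, and taking closures and intersecting over $x \in B$ yields $\Gamma_{f,B} = \Gamma_{g,B}$, which is the second assertion.
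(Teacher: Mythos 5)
Your proof is correct and follows essentially the same route as the paper: a time-change argument showing that each $g_i$-orbit coincides with the corresponding $f_i$-orbit via an increasing bijection of $[0,\infty)$ onto itself, then a leg-by-leg substitution to transfer the equality to composite flows. In fact you supply more detail than the paper, which merely asserts the existence of the increasing bijection $\alpha^i_x$, whereas you construct it explicitly as the solution of $\sigma'(t)=h_i(\varphi^i_{\sigma(t)}(x))$ and verify surjectivity using the positivity and boundedness of $h_i$.
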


\begin{proof}
Since  $g_i = h_i f_i$, one can find, for all $i \in E$ and $x \in \mathbb{R}^d$ an increasing bijection $\alpha^i_x : [0, + \infty) \to [0 + \infty)$ such that, for all $t \geq 0$, $\psi_t^i(x) = \varphi^i_{\alpha^i_x(t)}(x)$, where $\psi^i$ and $\varphi_i$ are the flow generated by $g_i$ and $f_i$, respectively. Now, let $y \in \mathcal{O}^+_f(x)$. Then, there exists $(\mathbf{i},\mathbf{v}) \in E^m \times \mathbb{R}_+^m$ such that 
\[
y = \mathbf{\Phi}_{\mathbf{u}}^{\mathbf{i}}(x) = \varphi_{u_m}^{i_m} \circ \ldots \circ \varphi_{u_1}^{i_1}(x).
\]
Hence, 
\[
y = \psi_{\alpha^{i_m}_{\varphi_{u_{m-1}}^{i_{m-1}} \circ \ldots \circ \varphi_{u_1}^{i_1}(x)}(u_m)}^{i_m} \circ \ldots \circ \psi^{i_1}_{\alpha^{i_1}_x(u_1)}(x),
\]
and so $y \in \mathcal{O}^+_g(x)$. Since the functions $\alpha$ are bijective, we can prove the converse inclusion, hence the equality of $ \mathcal{O}^+_f(x)$ and $\mathcal{O}^+_g(x)$
\end{proof}
With the notations of the main sections of this paper, one has 
\[
G_s(x) = h_s(x) f_s(x),
\]
where $h_s(x) = 1 + s^1 x^1 + \ldots + s^S x^S$ and $f_s^i(x) = x^i(s^i - (s^1 x^1 + \ldots + s^S x^S))$. The particular interest of the lemma lies in the fact that a convex combination of the $f_s$ is given by the function with the convex combination of the coefficients. That is, if $(a_s)_{s \in E}$ are nonnegative such that $\sum_s a_s = 1$, and if we set for $s_a = \sum_{s \in E} a_s s$, then $\sum_s a_s f_{s} = f_{s_a}$.

\section*{Acknowledgments}
We thank Michel Benaïm for many discussions on this subject, and for making it possible for AP and ES to meet. This work was mainly done while ES was at Université de Neuchâtel, and is partially founded by the SNF grant 200021\_175728. \\
We also thank Franck Jabot for having suggested the study of Moran's model and his varied ideas which allowed us to give biological meaning to this article.
\bibliographystyle{plain}
\bibliography{biblio}

\begin{thebibliography}{10}

\bibitem{ARMSTRONG1976317}
Robert~A. Armstrong and Richard McGehee.
\newblock Coexistence of species competing for shared resources.
\newblock {\em Theoretical Population Biology}, 9(3):317 -- 328, 1976.

\bibitem{azais}
R.~Aza{\"\i}s, J.-B. Bardet, A.~G{\'e}nadot, N.~Krell, and P.-A. Zitt.
\newblock Piecewise deterministic {M}arkov process—recent results.
\newblock In {\em ESAIM: Proceedings}, volume~44, pages 276--290. EDP Sciences,
  2014.

\bibitem{BH12}
Y.~Bakhtin and T.~Hurth.
\newblock Invariant densities for dynamical systems with random switching.
\newblock {\em Nonlinearity}, 25(10):2937--2952, 2012.

\bibitem{BHM15}
Y.~Bakhtin, T.~Hurth, and J.~C. Mattingly.
\newblock Regularity of invariant densities for 1d systems with random
  switching.
\newblock {\em Nonlinearity}, 28(11):3755, 2015.

\bibitem{BCGMZ13}
J.-B. Bardet, A.~Christen, A.~Guillin, F.~Malrieu, and P.-A. Zitt.
\newblock Total variation estimates for the {TCP} process.
\newblock {\em Electron. J. Probab.}, 18:no. 10, 21, 2013.

\bibitem{BMZIHP}
M.~Bena{\"\i}m, S.~Le~Borgne, F.~Malrieu, and P.-A. Zitt.
\newblock Qualitative properties of certain piecewise deterministic {M}arkov
  processes.
\newblock {\em Ann. Inst. Henri Poincar{\'e} Probab. Stat.}, 51(3):1040--1075,
  2015.

\bibitem{BL16}
M.~Bena{\"\i}m and C.~Lobry.
\newblock Lotka {V}olterra in fluctuating environment or ``how switching
  between beneficial environments can make survival harder''.
\newblock {\em Ann. Appl. Probab.}, 26(6):3754--3785, 2016.

\bibitem{BS17}
M.~Bena{\"\i}m and E.~Strickler.
\newblock Random switching between vector fields having a common zero.
\newblock {\em Ann. Appl. Probab.}, 29(1):326--375, 2019.

\bibitem{benaimpersistence}
M.~Benaïm.
\newblock Stochastic persistence.
\newblock {\em arXiv:1806.08450v2}, June 2018.

\bibitem{CMP10}
D.~Chafa\"{i}, F.~Malrieu, and K.~Paroux.
\newblock On the long time behavior of the {TCP} window size process.
\newblock {\em Stochastic Process. Appl.}, 120(8):1518--1534, 2010.

\bibitem{Chesson}
Peter Chesson and Robert Warner.
\newblock Environmental variability promotes coexistence in lottery competitive
  systems.
\newblock {\em American Naturalist}, 117:923--943, 06 1981.

\bibitem{Chesson1982}
Peter~L. Chesson.
\newblock The stabilizing effect of a random environment.
\newblock {\em Journal of Mathematical Biology}, 15(1):1--36, Sep 1982.

\bibitem{cloezmalrieu}
B.~Cloez, R.~Dessalles, A.~Genadot, F.~Malrieu, A.~Marguet, and R.~Yvinec.
\newblock Probabilistic and piecewise deterministic models in biology.
\newblock {\em ESAIM: Proceedings and Surveys}, 60:225--245, 2017.

\bibitem{CDMR12}
A.~Crudu, A.~Debussche, A.~Muller, and O.~Radulescu.
\newblock Convergence of stochastic gene networks to hybrid piecewise
  deterministic processes.
\newblock {\em Ann. Appl. Probab.}, 22(5):1822--1859, 2012.

\bibitem{danino2018stability}
M.~Danino, D.A. Kessler, and N.M. Shnerb.
\newblock Stability of two-species communities: drift, environmental
  stochasticity, storage effect and selection.
\newblock {\em Theoretical population biology}, 119:57--71, 2018.

\bibitem{danino2018fixation}
M.~Danino and N.M. Shnerb.
\newblock Fixation and absorption in a fluctuating environment.
\newblock {\em Journal of theoretical biology}, 441:84--92, 2018.

\bibitem{Dav84}
M.~H.~A. Davis.
\newblock Piecewise-deterministic {M}arkov processes: a general class of
  nondiffusion stochastic models.
\newblock {\em J. Roy. Statist. Soc. Ser. B}, 46(3):353--388, 1984.

\bibitem{deMottoni1981}
P.~de~Mottoni and A.~Schiaffino.
\newblock Competition systems with periodic coefficients: A geometric approach.
\newblock {\em Journal of Mathematical Biology}, 11(3):319--335, Mar 1981.

\bibitem{DNW14}
N.~H. Du, N.~H. Dang, and W.~Feng.
\newblock Asymptotic behavior of {K}olmogorov systems with predator-prey type
  in random environment.
\newblock {\em Communications on Pure \& Applied Analysis}, 13(6), 2014.

\bibitem{Ethier1989}
S.~N. Ethier and Thomas Nagylaki.
\newblock Diffusion approximations of the two-locus wright-fisher model.
\newblock {\em Journal of Mathematical Biology}, 27(1):17--28, Feb 1989.

\bibitem{FGM12}
J.~Fontbona, H.~Gu\'{e}rin, and F.~Malrieu.
\newblock Quantitative estimates for the long-time behavior of an ergodic
  variant of the telegraph process.
\newblock {\em Adv. in Appl. Probab.}, 44(4):977--994, 2012.

\bibitem{FGM16}
J.~Fontbona, H.~Gu\'{e}rin, and F.~Malrieu.
\newblock Long time behavior of telegraph processes under convex potentials.
\newblock {\em Stochastic Process. Appl.}, 126(10):3077--3101, 2016.

\bibitem{ggp2018}
G.~Gackou, A.~Guillin, and A.~Personne.
\newblock When can the discrete moran process be replaced by wright-fisher
  diffusion?, 2019.
\newblock ⟨hal-02116608⟩.

\bibitem{gjp2018}
A.~Guillin, F.~Jabot, and A.~Personne.
\newblock On the simpson index for the moran process with random selection and
  immigration, 2018.
\newblock arXiv:1809.08890.

\bibitem{HS17}
A.~Hening and E.~Strickler.
\newblock On a predator-prey system with random switching that never converges
  to its equilibrium.
\newblock {\em SIAM Journal on Mathematical Analysis}, 51(5):3625--3640, 2019.

\bibitem{HK19}
T.~Hurth and C.~Kuehn.
\newblock Random switching near bifurcations.
\newblock {\em Stochastics and Dynamics}, page 2050008, 2019.

\bibitem{jabotlohier}
F.~Jabot and T.~Lohier.
\newblock Non-random correlation of species dynamics in tropical tree
  communities.
\newblock {\em Oikos}, 125(12):1733--1742, 2016.

\bibitem{kalyuzhny}
M.~Kalyuzhny, R.~Kadmon, and N.M. Shnerb.
\newblock A neutral theory with environmental stochasticity explains static and
  dynamic properties of ecological communities.
\newblock {\em Ecology letters}, 18(6):572--580, 2015.

\bibitem{kimuraapproxdiff}
Motoo Kimura.
\newblock Diffusion models in population genetics.
\newblock {\em Journal of Applied Probability}, 1(2):177–232, 1964.

\bibitem{Moran}
P.~A.~P. Moran.
\newblock Random processes in genetics.
\newblock {\em Mathematical Proceedings of the Cambridge Philosophical
  Society.}, 54(1):60--71, 1958.

\bibitem{PTW10}
K.~Pakdaman, M.~Thieullen, and G.~Wainrib.
\newblock Fluid limit theorems for stochastic hybrid systems with application
  to neuron models.
\newblock {\em Adv. in Appl. Probab.}, 42(3):761--794, 2010.

\bibitem{PTW102}
K.~Pakdaman, M.~Thieullen, and G.~Wainrib.
\newblock Intrinsic variability of latency to first-spike.
\newblock {\em Biol. Cybernet.}, 103(1):43--56, 2010.

\bibitem{PTW12}
K.~Pakdaman, M.~Thieullen, and G.~Wainrib.
\newblock Asymptotic expansion and central limit theorem for multiscale
  piecewise-deterministic {M}arkov processes.
\newblock {\em Stochastic Process. Appl.}, 122(6):2292--2318, 2012.

\bibitem{coalescentselection}
Kluth S and Baake E.
\newblock The moran model with selection: fixation probabilities, ancestral
  lines, and an alternative particle representation.
\newblock {\em Theor Popul Biol}, 12:90--104, 2013.

\bibitem{Schreiber}
S.~Schreiber.
\newblock Persistence for stochastic difference equations: A mini review.
\newblock (18):1381–1403., 2012.

\bibitem{benaimschreiber}
S.~Schreiber, M.~Benaïm, and KAS Atchadé.
\newblock Persistence in fluctuating environments.
\newblock {\em Journal of Mathematical Biology}, 62:655–683, 2011.

\bibitem{S18}
E.~Strickler.
\newblock Randomly switched vector fields sharing a zero on a common invariant
  face.
\newblock {\em arXiv preprint arXiv:1810.06331}, 2018.

\end{thebibliography}

\end{document}